\newtheorem{Theorem}{Theorem}
\newtheorem{Lemma}{Lemma}
\newtheorem{Remark}{Remark}
\newcounter{Facts}
\definecolor{ZurichRed}{rgb}{1, 0, 0} % Red of svgnames
\definecolor{Gray}{gray}{0.85}
\newcommand{\mA}{{\mathcal{A}}}
\newcommand{\mP}{\mathcal{P}}
\newcommand{\R}{\mathbb{R}}
\newcommand{\mX}{{\mathcal{X}}}
\newcommand{\Var}{{\rm Var}}
\newcommand{\tPhi}{{\tilde{\Phi}}}
\newcommand{\Qhat}{{\widehat{Q}}}
\newcommand{\half}{\frac{1}{2}}
\newcommand{\1}{\mathbbm{1}}
\title{Optimal Estimation of A Quadratic Functional\\ and Detection of Simultaneous Signals}
\author{T. Tony Cai and Xin Lu Tan}
\begin{document}

\maketitle

\begin{abstract}
Motivated by applications in genomics, this paper studies the problem of optimal estimation of a quadratic functional of two normal mean vectors, $Q(\mu, \theta) = \frac{1}{n}\sum_{i=1}^n\mu_i^2\theta_i^2$, with a particular focus on the case where both mean vectors are sparse. We propose optimal estimators of  $Q(\mu, \theta)$ for different regimes and establish the minimax rates of convergence over a family of parameter spaces. The optimal rates exhibit interesting phase transitions in this family.  The simultaneous signal detection problem is also considered under the minimax framework. It is shown that the proposed estimators for $Q(\mu, \theta)$ naturally lead to optimal testing procedures.  
\end{abstract}

\medskip 
\noindent{\sc Key Words:} Detection of sparse simultaneous signals, minimax estimation, quadratic functional, simultaneous signals, sparse means.

\noindent{\sc AMS Subject Classification (2010)}: Primary 62H25; Secondary 62G07

%%%%%%%%%%%%%%%%%%%%%%%%%%%%%%%%%%%%%%%%%%%%%%%%%%%%%
% Introduction
\section{Introduction}

The problem of quadratic functional estimation occupies an important position in nonparametric and high-dimensional statistical inference. It is of significant interest in  its own right, and also has close connections to other important problems such as signal detection and construction of confidence balls. The focus so far has been on the one-sequence case. 
\cite{Bickel1988} showed that there is an interesting phase transition in the density estimation setting where the minimax rate of convergence is the usual parametric rate when the density function is sufficiently smooth, and is otherwise slower than the parametric rate. 

Under the Gaussian sequence model:
\begin{equation}
\label{gsminf}
Y_i = \theta_i + \sigma_n z_i, \qquad i = 1, 2, \ldots, 
\end{equation}
where $z_i \stackrel{iid}{\sim} N(0, 1)$, \cite{DonohoNussbaum1990}, \cite{Fan1991}, and \cite{Efromovich1996} further developed this theory for estimating $Q(\theta)=\sum\theta_i^2$ over quadratically convex parameter spaces such as hyperrectangles or Sobolev balls. 
The Gaussian sequence model \eqref{gsminf} is equivalent to the white noise with drift model and can be used to approximate nonparametric regression and density estimation models.  
\cite{CaiLow2005, CaiLow2006} considered minimax and adaptive estimation of the quadratic functional $Q(\theta)$ over parameter spaces that are not quadratically convex. It is shown that in such a setting optimal quadratic rules are often suboptimal and nonquadratic procedures may exhibit different phase transition phenomena than quadratic procedures. The results on estimating the quadratic functional $Q(\theta)$ have important implications on hypothesis testing and  construction of confidence balls. See, for example,  \cite{li1989}, \cite{dumbgen1998},  \cite{lepski1999},  \cite{ingster2003}, \cite{baraud2004}, \cite{Genovese2005},  and \cite{CaiLow2005, CaiLow2006b}.

%The study of signal detection under the gaussian sequence model has received considerable attention in the statistical literature, where the primary goal is to distinguish between $\theta = 0$ and $\theta\neq 0$.  Particularly important early work on this problem can be traced to \cite{Ingster1990}, \cite{Ingster1993}, \cite{Ingster1997}.  More recent interest on the detection problem was inaugurated by \cite{DonohoJin2004}, who posed the problem as the detection of non-null component in gaussian mixtures under the setting that the signal is rare and weak.  A second-order significance testing procedure called higher criticism was proposed in \cite{DonohoJin2004} and was shown to perform as well as the likelihood ratio test even without specifying the alternative distribution.  This work has since then been extended to several settings, and related results can be found in \cite{CaiJeng2011}, \cite{HallJin2008}, \cite{HallJin2010}, \cite{AriasCastro2011}, \cite{CaiWu2014}. 

Motivated by contemporary applications in genomics, we consider in the present paper estimation of the  functional  
\begin{equation}
Q(\mu, \theta) = \frac{1}{n}\sum_{i=1}^n\mu_i^2\theta_i^2 \label{Qmutheta}
\end{equation}
under the Gaussian two-sequence model,
\begin{equation}
X_i = \mu_i + \sigma z_i', \quad Y_i = \theta_i + \sigma z_i, \qquad i = 1,  \ldots, n, \label{twogsm}
\end{equation}
where $z_1', \ldots, z_n', \; z_1, \ldots, z_n\stackrel{iid}{\sim} N(0, 1)$ and $\sigma$ is the noise level. The goal is to optimally estimate the quadratic functional $Q(\mu, \theta)$ based on the observed data $(X_i, Y_i)$, $i=1, ..., n$. (Strictly speaking, $Q(\mu, \theta)$ is a quartic functional, but we will refer to it as a quadratic functional in the two-sequence case, as it is quadratic in $\mu$ given $\theta$, and vice versa.)
We are particularly interested in the case where both mean vectors $\mu = (\mu_1, \ldots, \mu_n)$ and $\theta = (\theta_1 ,\ldots, \theta_n)$ are sparse.

This estimation problem is motivated by the detection of simultaneous signals in genomics, where high-throughput technologies have generated a broad array of large-scale genome-wide datasets \citep{Schena1995, Lockhart2002, Puig2001}.  As the heterogeneous datasets provide distinct - but often complementary - views of biological systems, an integrative approach in data analysis is called for to obtain a coherent view of the underlying biology.  
As an example, it is of great interest to connect certain genotypes to specific phenotypic outcomes to infer causal relationship among genetic variation, expression and disease.  With regard to this, many genome-wide association studies (GWAS) have identified potential disease-associated SNPs, and a natural next step is to identify genes whose expression levels are regulated by the disease-associated SNPs.  %A naive approach to identification of genes whose regulatory changes are associated with disease is based on genomic proximity.  Nonetheless, many studies of expression quantitative trait loci (eQTL) demonstrate that expression of most genes is governed by multiple loci, most of which locate far away from genes \cite{Brem2005}, \cite{Cheung2010}.  
A possibly effective integrative approach exploits the potential overlap between SNPs associated with expression (expression SNPs) and the SNPs associated with disease (disease SNPs) for improved power in detecting gene-disease associations \citep{He2013}.  Recent findings also suggest overlapping SNPs between numerous human traits \citep{Sivakumaran2011} and disorders \citep{Cotsapas2011, Ripke2011}.  Thus combining GWAS statistics from two (or even multiple) disorders provides increased power for discovering genes associated with common biological mechanism, thereby informs on overlapping pathophysiological relationship between the disorders.  Other examples where detecting simultaneously occurring signals is of interest include the detection of shared DNA copy number variation across samples and meta-analysis of multiple linkage studies \citep{Zhang2010}.

In a simplified statistical framework, a problem of particular interest is detecting simultaneous signals under the Gaussian two-sequence model \eqref{twogsm}.  Specifically, let $\mu\star\theta = (\mu_1\theta_1, \ldots, \mu_n\theta_n)$ be the coordinate-wise product of $\mu$ and $\theta$.  For the mean vector $\mu$ (similarly, $\theta$), we say that there is a signal at location $i$ if $\mu_i\neq 0$ (similarly, $\theta_i\neq 0$).  Our goal is to detect the existence of simultaneous signal for $\mu$ and $\theta$, which corresponds to the presence of location $i$'s with $\mu_i\theta_i\neq 0$.  Equivalently, we want to distinguish between $\mu\star\theta = 0$ and $\mu\star\theta\neq 0$.  Of particular interest is the setting where the proportion of signals is small, and the signal strengths are relatively weak.  % --- which is termed the \emph{rare and weak} setting in \cite{DonohoJin2004}.  
This is indeed the setting in the gene-disease associations context, as only a small number of SNPs are expected to be associated with a disease or to regulate gene expression level.  Moreover, the association, if exists, is weak.  
%This simultaneous signal detection problem has been studied under a mixture model framework in \cite{}.  In this paper, we consider the testing problem under an asymptotically minimax framework.  

As demonstrated in the single Gaussian sequence model setting considered in \cite{CaiLow2005}, 
the minimax hypothesis testing problem is closely connected to the minimax estimation theory.  
%To enhance our understanding about the testing problem, we begin our exploration with an estimation problem.  
Our interest in detecting the existence of simultaneous signals for the unknown mean vectors $\mu$ and $\theta$ motivates the estimation of the quadratic functional of $(\mu, \theta)$ given in \eqref{Qmutheta}.
Note that $Q(\mu, \theta) = 0$ if and only if $\mu\star\theta = 0$.  Indeed, the study of estimation of $Q(\mu, \theta)$ turns out to highlight some important features of the testing problem. %, hence allow us to formulate the detection problem in a sensible way.
We emphasize that the two-sequence estimation and detection problem are not straightforward extension of the one-sequence case, and is interesting in its own right.

Our contribution is two-fold.  First, we propose optimal estimators of  $Q(\mu, \theta)$ over a family of parameter spaces to be introduced, and establish the minimax rates of convergence. It is shown that the optimal rate exhibits interesting phase transitions in this family.  Along with the establishment of the minimax rates of convergence, we explain the intuition behind the construction of the optimal estimators.  Second, we study the simultaneous signal detection problem under the minimax framework, and show that the proposed estimators for $Q(\mu, \theta)$ naturally lead to optimal testing procedures.  Thus, we bridge the gap between estimation and detection in the two-sequence case.  Our formulation of the simultaneous signal detection problem also provides an alternative view to that of \cite{Zhao2014}, where the problem is studied under the mixture model framework.  

The rest of the paper is organized as follows: Section \ref{twosample} considers estimation of the functional $Q(\mu, \theta)$ and establishes the minimax rates of convergence.  An application of the estimators of $Q(\mu, \theta)$ to the simultaneous signal detection problem is given in Section \ref{detection}.  Section \ref{simulation} complements our theoretical study with some simulation results, and we conclude the paper with a  discussion in Section \ref{discussion}.  Some additional results that are not included in the main text are given in Appendix \ref{general}. Proofs of some of the main results are given in Section~\ref{proofs}, with the rest relegated to Appendix \ref{prooftwosample} for the reason of space.

%%%%%%%%%%%%%%%%%%%%%%%%%%%%%%%%%%%%%%%%%%%%%%%%%%%%%
% Two-sequence
\section{Optimal Estimation of $Q(\mu, \theta)$} \label{twosample}
% Two-sequence: Estimation
In this section, we consider the estimation of the quadratic functional $Q(\mu, \theta) = \frac{1}{n}\sum_{i=1}^n\mu_i^2\theta_i^2$ of two sparse normal mean vectors $\mu = (\mu_1, \ldots, \mu_n)$ and $\theta = (\theta_1, \ldots, \theta_n)$ under the Gaussian two-sequence model \eqref{twogsm}.
% as defined in \eqref{Qmutheta}.   
%Motivated by the detection problem, we consider the parameter space where for both $\mu$ and $\theta$, the magnitude of each coordinate is uniformly bounded, and the number of nonzero coordinates is constrained.  
An additional constraint is also imposed on the number of coordinates that are simultaneously nonzero for both mean vectors.  The noise level $\sigma$ in model \eqref{twogsm} is assumed to be known.  Estimation of the noise level, $\sigma$, is relatively easy under the sparse sequence model \eqref{twogsm} and will be discussed in Section \ref{simulation}.

We begin by introducing some notation that will be used throughout the paper.
Given a vector $\theta = (\theta_1, \ldots, \theta_n)$, we denote by $\|\theta\|_0 = \text{Card}(\{i: \theta_i\neq 0\})$ the $\ell_0$-quasi-norm of $\theta$, $\|\theta\|_2 = \sqrt{\sum_{i=1}^n\theta_i^2}$ its $\ell_2$-norm, and $\|\theta\|_\infty = \max_{1\leq i\leq n} |\theta_i|$ its $\ell_\infty$-norm.  For any real number $a$ and $b$, set $a\wedge b = \min\{a, b\}, a\vee b = \max\{a, b\}$ and $a_+ = a\vee 0$.  Throughout, the notation $a_n\asymp b_n$ means that there exists some numerical constants $c$ and $C$ such that $c\leq \frac{a_n}{b_n}\leq C$ when $n$ is large.  By ``numerical constants" we usually mean constants that might depend on the characteristics of the problem but whose specific values are of little interest to us.  The precise values of the numerical constants $c$ and $C$ may also vary from line to line.

Adopting an asymptotic framework where the vector size $n$ is the driving variable, we parameterize the signal strength, sparsity, and simultaneous sparsity of $\mu$ and $\theta$ as functions of $n$.  Specifically, we consider the family of parameter spaces
\begin{align}
\Omega(\beta, \epsilon, b)  &= \{(\mu, \theta)\in\R^n\times\R^n: \|\mu\|_0\leq k_n, \|\mu\|_\infty\leq s_n, \|\theta\|_0\leq k_n, \|\theta\|_\infty\leq s_n, \nonumber\\
&\qquad\qquad\qquad\qquad\qquad \|\mu\star\theta\|_0\leq q_n\}, \label{twoSpace}
\end{align}
indexed by three parameters $\beta, \epsilon$, and $b$.  We have the sparsity parameterization
\begin{equation}
k_n = n^\beta, \qquad\qquad 0<\beta<\half, \label{sparsity}
\end{equation}
the simultaneous sparsity parameterization
\begin{equation}
q_n = n^\epsilon, \qquad\qquad 0<\epsilon\leq\beta, \label{simsparsity}
\end{equation}
and the signal strength parametrization
\begin{equation}
s_n = n^b, \textcolor{ZurichRed}\qquad\qquad b\in\R. \label{magnitude}
\end{equation}
In other words, $\Omega(\beta, \epsilon, b)$ is the collection of vector pairs $(\mu, \theta)\in\R^n\times\R^n$, where both $\mu$ and $\theta$ have at most $k_n$ nonzero entries, each entry is bounded in its magnitude by $s_n$, and the number of simultaneous nonzero entries for $\mu$ and $\theta$ is at most $q_n$.  In principle, $\beta$ can take any value between 0 and 1.  We are primarily interested in the estimation problem for the range $0<\beta<\half$, as it is well-known that this corresponds to the case of rare signals \citep{DonohoJin2004}.  Also, even though we parametrize the signal strength at the algebraic order $s_n = n^b$, throughout we will make remark on the estimation result for $s_n$ of order $\sqrt{\log n}$, since this is an interesting region in the one-sequence signal detection problem \citep{DonohoJin2004}.

Our goal is to derive the minimax rate of convergence for $Q(\mu, \theta)$ over $\Omega(\beta, \epsilon, b)$:
\[R^*(n, \Omega(\beta, \epsilon, b)) = \inf_{\Qhat}\sup_{(\mu, \theta)\in\Omega(\beta, \epsilon, b)} E_{(\mu, \theta)}(\Qhat-Q(\mu, \theta))^2.\]
We will show that $R^*(n, \Omega(\beta, \epsilon, b))$ satisfies
\begin{equation}
R^*(n, \Omega(\beta, \epsilon, b)) \asymp \gamma_n(\beta, \epsilon, b), \label{minimax}
\end{equation}
where $\gamma_n(\beta, \epsilon, b)$ is a function of $n$ indexed by $\beta, \epsilon$ and $b$.  There are two main tasks in establishing the minimax rate of convergence.  For each triple $(\beta, \epsilon, b)$ satisfying $0<\epsilon\leq\beta<\half$ and $b\in\R$, we 
\begin{enumerate}[(a)]
\item construct an estimator $\Qhat^*$ that satisfies
\[\sup_{(\mu, \theta)\in\Omega(\beta, \epsilon, b)}E_{(\mu, \theta)}(\Qhat^*-Q(\mu, \theta))^2 \leq C\gamma_n(\beta, \epsilon, b),\]
\item and show that 
\[R^*(n, \Omega(\beta, \epsilon, b)) \geq c\gamma_n(\beta, \epsilon, b),\]
\end{enumerate}
where $C$ and $c$ are numerical constants that depend only on $\beta, \epsilon, b$, and $\sigma$.  Combining the upper bound derived in task (a) and the lower bound derived in task (b) yields the minimax rate of convergence \eqref{minimax}.  In this case, we say that the estimator $\Qhat^*$ attains the minimax rate of convergence over the parameter space $\Omega(\beta, \epsilon, b)$.

Interestingly, the estimation problem exhibits different phase transitions for the minimax rate $\gamma_n(\beta, \epsilon, b)$ in three regimes: the \emph{sparse} regime where $0<\epsilon<\frac{\beta}{2}$, the \emph{moderately dense} regime where $\frac{\beta}{2}\leq\epsilon\leq\frac{3\beta}{4}$, and the \emph{strongly dense} regime where $\frac{3\beta}{4}<\epsilon\leq\beta$.  Collectively, we call $\frac{\beta}{2}\leq\epsilon\leq\beta$ the \emph{dense} regime.  In the sparse regime, simultaneous signal is sparse in the sense that $q_n\ll\sqrt{k_n}$, while in the dense regime, simultaneous signal is dense in the sense that $q_n\gg\sqrt{k_n}$.  This is analogous to the terminology used in the one-sequence model, where signal is called sparse if $0<\beta<\half$ ($k_n\ll\sqrt{n}$), and dense if $\half\leq\beta\leq1$ ($k_n\gg\sqrt{n}$).  The key distinction is that, in the two-sequence case, the sparseness or denseness is used to describe the relationship between simultaneous sparsity $q_n$ and sparsity $k_n$, as opposed to between $k_n$ and the vector size $n$.  We also remark that our use of the terminology is not superficial --- a detailed analysis of lower bound and upper bound for the estimation problem does reveal intimate connection to the corresponding regimes in the one-sequence case.

Intuitively, when $b$ is very small (i.e., signal is very weak), we are better off estimating $Q(\mu, \theta)$ by 
\begin{equation}
\Qhat_0 = 0, \label{estimator0}
\end{equation}
since any attempt to estimate $Q(\mu, \theta)$ will incur a greater estimation risk.  On the other hand, when $b$ is sufficiently large (i.e., signal is strong), it is desirable to estimate $Q(\mu, \theta)$ based on the observed data $(X_i, Y_i)$, $i = 1, \ldots, n$.  With a slight abuse of terminology, we say that the signal is weak if it corresponds to the region where $\Qhat_0$ is optimal, and we say that the signal is strong otherwise.  We construct two estimators of $Q(\mu, \theta)$ that respectively attain the minimax rates of convergence over the sparse and dense regimes when the signal is sufficiently large in Sections~\ref{sparseest} and \ref{denseest}.

Note that it is possible to generalize our parametrization to the case where $\mu$ and $\theta$ have different levels of both sparsity and signal strengths.  This amounts to estimating $Q(\mu, \theta)$ over the parameter space
\begin{align}
\Omega(\alpha, \beta, \epsilon, a, b)  &= \{(\mu, \theta)\in\R^n\times\R^n: \|\mu\|_0\leq j_n, \|\mu\|_\infty\leq r_n, \|\theta\|_0\leq k_n, \|\theta\|_\infty\leq s_n, \nonumber\\
&\qquad\qquad\qquad\qquad\qquad\|\mu\star\theta\|_0\leq q_n\}, \label{compSpace}
\end{align}
where $j_n = n^\alpha, k_n = n^\beta, q_n = n^\epsilon$ with $0<\epsilon\leq\alpha\wedge\beta<\half$, and $r_n = n^a, s_n = n^b$ with $a, b\in\R$.  In this section, however, we will focus on the simplest case where $j_n = k_n = n^\beta$ and $r_n = s_n = n^b$, since the technical analysis is similar to that for the more general case  \eqref{compSpace} but less tedious.  We did derive the minimax rate of convergence for the case where $j_n = k_n = n^\beta$ but $r_n$ and $s_n$ are allowed to differ.  As the phase transitions for the minimax rates of convergence in this case are much more sophisticated but also are less easily digestible, we opt to defer its presentation to Appendix~\ref{generalEst}.   The analysis for the general case \eqref{compSpace} where no constraint is imposed on either the sparsity or signal strength of $\mu$ and $\theta$ follows similarly, provided that the magnitude of the simultaneous sparsity $\epsilon$ is compared to $\alpha$ if $a\geq b$, and to $\beta$ if $b\geq a$, for the determination of sparse and dense regimes.

\subsection{Estimation in the Sparse Regime}\label{sparseest}
We begin with the estimation of $Q(\mu, \theta) = \frac{1}{n}\sum\mu_i^2\theta_i^2$ over the parameter space $\Omega(\beta, \epsilon, b)$ in the sparse regime, where $q_n$ is calibrated as in expression \eqref{simsparsity} with $0<\epsilon<\frac{\beta}{2}$.

To construct an optimal estimator for $Q(\mu, \theta)$, we base our intuition on the estimation of the quadratic functional $Q(\theta) = \frac{1}{n}\sum\theta_i^2$, in the case where we only have one sequence of observations $Y_i$, $i = 1, \ldots, n$, from model \eqref{twogsm}.  Consider the family of parameter spaces indexed by $k_n = n^\beta, 0<\beta<1$ and $s_n = n^b, b\in\R$:
\begin{equation}
\Theta(\beta, b)  = \{\theta\in\R^n: \|\theta\|_0\leq k_n, \|\theta\|_\infty\leq s_n\}. \label{oneSpace}
\end{equation}
That is, $\Theta(\beta, b)$ is the collection of vectors in $\R^n$ that has at most $k_n$ nonzero entries uniformly bounded in magnitude by $s_n$.  It can be shown that for $0<\beta<\half$, the minimax rate of convergence for $Q(\theta)$ over $\Theta(\beta, b)$ satisfies
\begin{equation}
R^*(n, \Theta(\beta, b)) := \inf_{\Qhat}\sup_{\theta\in\Theta(\beta, b)}E_\theta(\Qhat-Q(\theta))^2 \asymp \gamma_n(\beta, b), \label{minimaxOne}
\end{equation}
where
\begin{equation}
\gamma_n(\beta, b) = \left\{
\begin{array}{ll}
n^{2\beta+4b-2} &\text{if }b\leq 0, \\
n^{2\beta-2}(\log n)^2 &\text{if }0 < b \leq \frac{\beta}{2}, \\
n^{\beta+2b-2} &\text{if }b>\frac{\beta}{2}. \label{onesamplerateS}
\end{array}\right.
\end{equation}
Moreover, the minimax rate of convergence when $s_n = \sigma\sqrt{d\log n}$ for some $d>0$ satisfies 
\[\inf_{\Qhat}\sup_{\theta: \|\theta\|_0\leq k_n, \|\theta\|_\infty\leq \sigma\sqrt{d\log n}}E_\theta(\Qhat-Q(\theta))^2 \asymp n^{2\beta-2}(\log n)^2.\]
Thus, the phase transition of $\gamma_n(\beta, b)$ from $b\leq 0$ to $b>0$ is smooth.  The special interest on the signal strength along the order of $\sqrt{\log n}$ has its root in the one-sequence signal detection problem, which we will discuss in more details in Section~\ref{detection}.

When $0<\beta<\half$, we have $k_n\ll\sqrt{n}$.  Thus, we anticipate only very few coordinates of $\theta$ to be nonzero.  If, in addition, $b<0$, then the signal is both rare and weak, and one can do no better than simply estimating $Q(\theta)$ by $\Qhat_0 = 0$.  Nonetheless, when $b>0$, signal is rare but sufficiently strong, and the estimator
\begin{equation}
\Qhat_1 = \frac{1}{n}\sum_{i=1}^n[(Y_i^2-\sigma^2\tau_n)_+ - \theta_0], \qquad\text{where } \theta_0 := E_0(Y_i^2-\sigma^2\tau_n)_+ \label{estimator1}
\end{equation}
which performs coordinate-wise thresholding on $Y_i^2$ with choice of tuning parameter $\tau_n = 2\log n$ is optimal.  Note that each term $\theta_i^2$ is estimated independently by $(Y_i^2-\sigma^2\tau_n)_+ - \theta_0$, since the sparsity pattern is unstructured.  The estimator \eqref{estimator1} involves a thresholding step, $(Y_i^2-\sigma^2\tau_n)_+$, for denoising, and a de-bias step by subtracting $\theta_0$ from the thresholded term so that we estimate the zero coordinates of $\theta$ unbiasedly.  This is important because the proportion of zero entries in this case is relatively large, and a biased estimator for these coordinates will unnecessarily inflates the estimation risk.  When $s_n = \sigma\sqrt{d\log n}$ for some $d>0$, we are indifferent in terms of estimation, since both $\Qhat_0$ and $\Qhat_1$ attains the minimax rate of convergence.

We now return to the estimation of $Q(\mu, \theta)$ in the two-sequence case, where $0<\epsilon<\frac{\beta}{2}$ and $0<\beta<\half$.  In this case, $k_n\ll\sqrt{n}$, so the signal of individual sequences is rare.  Moreover, the simultaneous sparsity $q_n \ll \sqrt{k_n}$, implying that knowledge about whether $\mu_i$ is nonzero does not entail much about whether $\theta_i$ is nonzero (and vice versa).  This motivates the estimator
\begin{equation}
\Qhat_2 = \frac{1}{n}\sum_{i=1}^n[(X_i^2-\sigma^2\tau_n)_+ - \mu_0][(Y_i^2-\sigma^2\tau_n)_+ - \theta_0], \qquad \mu_0 = \theta_0 := E_0(Y_i^2-\sigma^2\tau_n)_+ \label{estimator2}
\end{equation}
in the case of sufficiently strong signal, where the threshold level $\tau_n = \log n$.  The construction of $\Qhat_2$ is a straightforward extension of the construction of $\Qhat_1$: each term $\mu_i^2\theta_i^2$ is estimated independently by the product $[(X_i^2-\sigma^2\tau_n)_+ - \mu_0][(Y_i^2-\sigma^2\tau_n)_+ - \theta_0]$.  Since $q_n \ll \sqrt{k_n}$, following our previous argument, thresholding $X_i^2$ and $Y_i^2$ independently seems reasonable.  
%The tuning parameter $\tau_n$ is yet to be chosen upon an analysis of the mean squared error of $\Qhat_2$.  As it turns out, a choice of $\tau_n = c\log n$ with any $c\geq 1$ is optimal in terms of estimation rate.

We now present a theorem on the upper bound of the mean squared error of $\Qhat_2$.
\begin{Theorem}[Sparse Regime: Upper Bound]\label{two-inf-sparse-up}
For $b>0$, the estimator $\Qhat_2$ as in \eqref{estimator2} with $\tau_n = \log n$ satisfies
\begin{equation}
\sup_{(\mu, \theta)\in\Omega(\beta, \epsilon, b)}E_{(\mu, \theta)}(\Qhat_2-Q(\mu, \theta))^2 \leq C\Big[n^{2\epsilon+4b-2}(\log n)^2 + n^{\epsilon+6b-2}\Big].  \label{Q3rate}
\end{equation}
\end{Theorem}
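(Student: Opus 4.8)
The plan is to control the mean squared error by the standard bias-variance decomposition
\[
E_{(\mu,\theta)}(\Qhat_2 - Q(\mu,\theta))^2 = \big(E_{(\mu,\theta)}\Qhat_2 - Q(\mu,\theta)\big)^2 + \Var_{(\mu,\theta)}(\Qhat_2),
\]
and to bound each of the two pieces over the parameter space $\Omega(\beta,\epsilon,b)$. The key device is that $\Qhat_2 = \frac1n\sum_{i=1}^n U_i V_i$ where $U_i := (X_i^2 - \sigma^2\tau_n)_+ - \mu_0$ and $V_i := (Y_i^2 - \sigma^2\tau_n)_+ - \theta_0$ are independent across $i$ (the two sequences are independent, and within a coordinate $U_i$ depends only on $X_i$, $V_i$ only on $Y_i$), and $U_i, V_i$ are independent of each other. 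So I would first establish, via routine Gaussian tail and moment computations with $\tau_n = \log n$, a set of per-coordinate estimates: for a coordinate with mean $\nu$ (playing the role of $\mu_i$ or $\theta_i$), bounds of the form $|E_\nu (X^2-\sigma^2\tau_n)_+ - \nu^2| \lesssim$ (something small when $\nu = 0$, and $\lesssim \nu^2 \vee$ small otherwise), together with second- and fourth-moment bounds on $(X^2 - \sigma^2\tau_n)_+$. The threshold $\sigma^2\tau_n = \sigma^2\log n$ makes $E_0(X^2 - \sigma^2\tau_n)_+$ polynomially small in $n$ (of order $n^{-1/2}$ up to lower-order factors), which is what kills the contribution of the zero coordinates; this is the analogue of the de-biasing remark made for $\Qhat_1$.

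For the \textbf{bias}, write $E\Qhat_2 - Q = \frac1n\sum_i \big(E U_i \, E V_i - \mu_i^2\theta_i^2\big)$ and split the index set into the at most $q_n$ coordinates where both $\mu_i\neq 0$ and $\theta_i\neq 0$, the coordinates where exactly one is nonzero, and the coordinates where both are zero. On the ``both nonzero'' coordinates each summand is at most of order (bias in $\mu_i^2$)$\times s_n^2$ plus (bias in $\theta_i^2$)$\times s_n^2$ plus the product of the two biases; here one uses $\|\mu\|_\infty, \|\theta\|_\infty \le s_n = n^b$ and the per-coordinate bias bounds, so the total is $\lesssim \frac{q_n}{n}\cdot (\text{per-coord bias})\cdot s_n^2 \asymp \frac{n^\epsilon}{n} n^{3b}$ up to $\log$ factors, whose square feeds the $n^{\epsilon+6b-2}$-type term (after accounting for how the bias itself scales with $b$). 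On the ``exactly one nonzero'' and ``both zero'' coordinates, $E U_i$ or $E V_i$ is the tiny de-biased quantity $E_0(\cdot)_+ - \theta_0 = 0$ in expectation structure — more precisely these terms vanish or are negligible because $\mu_0,\theta_0$ were chosen exactly to make $E_0 U_i = E_0 V_i = 0$. So the bias is essentially driven only by the $q_n$ simultaneously-nonzero coordinates, and $(\text{bias})^2 \lesssim n^{2\epsilon+4b-2}(\log n)^2 + n^{2\epsilon + 6b - 2}$, which is dominated by the claimed bound.

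For the \textbf{variance}, use independence across $i$ to get $\Var(\Qhat_2) = \frac{1}{n^2}\sum_i \Var(U_i V_i)$, and bound $\Var(U_iV_i) \le E[U_i^2 V_i^2] = E[U_i^2]\,E[V_i^2]$ by independence of $U_i$ and $V_i$. Again split by whether $\mu_i,\theta_i$ are zero. When both are zero, $E[U_i^2]E[V_i^2]$ is a product of two polynomially-small (in $n$) quantities — of order $n^{-1}$ each up to lower order, so $n^{-2}$ per coordinate and $n^{-1}$ after summing $n$ of them, contributing $n^{-3}$ to the variance, negligible. When exactly one is nonzero, say $\mu_i \neq 0, \theta_i = 0$: $E[U_i^2] \lesssim s_n^4 \vee (\text{const})$ and $E[V_i^2]$ is polynomially small, and there are at most $k_n = n^\beta$ such coordinates; the product $\frac{1}{n^2}k_n s_n^4 n^{-1} \asymp n^{\beta+4b-3}$, which one checks is dominated by the stated rate. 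When both are nonzero (at most $q_n$ of them), $E[U_i^2]E[V_i^2] \lesssim s_n^4 \cdot s_n^4 = n^{8b}$ (or a constant/$\log$ when $b$ is small), giving a contribution $\frac{1}{n^2} q_n n^{8b} = n^{\epsilon + 8b - 2}$ — here I need to be slightly more careful, since the stated bound only has $n^{\epsilon+6b-2}$; the resolution is that when $b>0$ the dominant per-coordinate second moment of $(Y_i^2 - \sigma^2\log n)_+$ for a signal coordinate is of order $\theta_i^2(\theta_i^2 + \sigma^2) \lesssim s_n^2\cdot s_n^2$ only when $b$ is large, while the centering subtracts $\theta_0$ and the relevant fluctuation is $\Var$, not raw second moment, so $\Var(U_i) \lesssim \mu_i^2 + \sigma^4$-type, yielding $\Var(U_iV_i) \lesssim (\mu_i^2\theta_i^2 + \ldots)$ and ultimately the $n^{\epsilon+6b-2}$ scaling.

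\textbf{Main obstacle.} The delicate point — and the step I expect to require the most care — is exactly this last variance bound on the simultaneously-nonzero coordinates: getting the right power of $n^b$ requires tracking that the product $U_i V_i$ has been centered correctly and exploiting $\Var$ (not second moment) of the thresholded coordinate, together with the interaction between the threshold $\sigma^2\log n$ and signals of size up to $n^b$. One must handle separately the sub-case $n^b \le \sigma\sqrt{2\log n}$ (signal below or near the threshold, where $(X_i^2 - \sigma^2\log n)_+$ is small with high probability and its variance is $\lesssim (\log n)$-order) and the sub-case $n^b \gg \sqrt{\log n}$ (signal well above the threshold, where $(X_i^2 - \sigma^2\log n)_+ \approx X_i^2$ and $\Var \approx 4\sigma^2\mu_i^2 + 2\sigma^4$). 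Stitching these together to land on the unified bound $n^{2\epsilon+4b-2}(\log n)^2 + n^{\epsilon+6b-2}$ valid for all $b > 0$ is the crux of the argument; the remaining pieces are routine bias-variance bookkeeping.
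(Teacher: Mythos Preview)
Your overall plan---bias--variance decomposition, per-coordinate independence of $U_i$ and $V_i$, and splitting the index set by which of $\mu_i,\theta_i$ are nonzero---is exactly the paper's approach. Your observation that $E_0U_i = E_0V_i = 0$ makes the bias vanish on all but the $q_n$ simultaneously-nonzero coordinates is correct and is precisely how the paper handles it (via the bound $|E(\widehat{\theta^2})-\theta^2|\le\min\{2\sigma^2\tau,\theta^2\}$).

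There is one genuine gap. Your variance bound $\Var(U_iV_i)\le E[U_i^2]\,E[V_i^2]$ is too crude: on a simultaneously-nonzero coordinate it gives $\lesssim s_n^4\cdot s_n^4 = n^{8b}$ and hence $n^{\epsilon+8b-2}$, which is strictly larger than the target $n^{\epsilon+6b-2}$. You sense this and propose to ``use $\Var$, not raw second moment,'' but you never write down the actual mechanism. What is needed is the \emph{exact} identity for independent $A,B$,
\[
\Var(AB)=\Var(A)\Var(B)+(EA)^2\Var(B)+(EB)^2\Var(A),
\]
which is precisely $E[A^2]E[B^2]-(EA)^2(EB)^2$; dropping the subtracted term is what costs you two powers of $s_n$. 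With this identity and the single-coordinate estimates $\Var(U_i)\lesssim \mu_i^2 + \tau^{1/2}e^{-\tau/2}$ and $(EU_i)^2\lesssim \max\{\mu_i^2+\tau^{1/2}e^{-\tau/2},\,\mu_i^4\}$ (the paper's Lemmas~\ref{qbound2} and~\ref{varboundlemma}, the first due to \cite{CaiLow2005}), the dominant cross term becomes $(EU_i)^2\Var(V_i)\lesssim \mu_i^4\theta_i^2\le s_n^6$, yielding $\frac{1}{n^2}q_n s_n^6 = n^{\epsilon+6b-2}$ directly. No case split on whether $n^b$ is above or below $\sqrt{\log n}$ is required; these uniform bounds handle all $\theta_i$ at once, so your ``main obstacle'' dissolves once the right identity is in place.

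A minor arithmetic slip: your bias bookkeeping produced an extra term $n^{2\epsilon+6b-2}$ (from the ``$\frac{n^\epsilon}{n}n^{3b}$'' line). This term is not dominated by $n^{2\epsilon+4b-2}(\log n)^2 + n^{\epsilon+6b-2}$ for $b>0$, so if it were really there the theorem would fail. It is not: the per-coordinate bias on a nonzero coordinate is $\lesssim s_n^2\tau$, not $s_n^3$, so the squared total bias is just $\lesssim (q_n s_n^2\tau/n)^2 = n^{2\epsilon+4b-2}(\log n)^2$.
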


Straightforward calculation shows that for the estimator $\Qhat_0 = 0$,
\begin{align}
\sup_{(\mu, \theta)\in \Omega(\beta, \epsilon, b)} E_{(\mu, \theta)}(\Qhat_0-Q(\mu, \theta))^2 &= \sup_{(\mu, \theta)\in \Omega(\beta, \epsilon, b)}\bigg(\frac{1}{n}\sum_{i=1}^n\mu_i^2\theta_i^2\bigg)^2 \nonumber\\
&= q_n^2s_n^8n^{-2} = n^{2\epsilon+8b-2}, \label{Q0rate}
\end{align}
for $0<\epsilon\leq\beta<\half$ and $b\in\R$.  We now show that the combination of $\Qhat_0$ (when $b<0$) and $\Qhat_2$ (when $b\geq 0$) is optimal, by providing a matching lower bound.

\begin{Theorem}[Sparse Regime: Lower Bound]\label{two-inf-sparse}
Let $0<\epsilon<\frac{\beta}{2}$ and $0<\beta<\half$.  Then
\[R^*(n, \Omega(\beta, \epsilon, b)) \geq c\gamma_n(\beta, \epsilon, b),\]
where
\begin{equation}
\gamma_n(\beta, \epsilon, b) = \left\{
\begin{array}{ll}
n^{2\epsilon+8b-2} &\text{if }b\leq 0, \\
n^{2\epsilon+4b-2}(\log n)^2 &\text{if }0<b\leq\frac{\epsilon}{2}, \\
n^{\epsilon+6b-2} &\text{if }b>\frac{\epsilon}{2}. \label{srate}
\end{array}\right.
\end{equation}
\end{Theorem}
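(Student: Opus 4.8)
The plan is to establish the three-piece lower bound by constructing, in each regime of $b$, a suitable two-point or fuzzy-hypothesis (Bayesian prior) argument. For the regime $b \le 0$, the bound $n^{2\epsilon+8b-2}$ matches the risk of $\Qhat_0 = 0$ computed in \eqref{Q0rate}, so it suffices to exhibit two parameter configurations $(\mu^{(0)}, \theta^{(0)})$ and $(\mu^{(1)}, \theta^{(1)})$ in $\Omega(\beta, \epsilon, b)$ whose values of $Q$ differ by a constant multiple of $\sqrt{n^{2\epsilon+8b-2}} = q_n s_n^4 / n$, yet whose joint distributions of the data $(X_i, Y_i)$ are statistically indistinguishable (chi-square affinity bounded away from $0$). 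The natural choice is $\mu^{(0)} = \theta^{(0)} = 0$ versus $\mu^{(1)} = \theta^{(1)}$ supported on a common set of $q_n$ coordinates with each nonzero entry equal to $s_n = n^b$; since $b \le 0$, the per-coordinate signal-to-noise ratio is $O(1)$, so the total chi-square distance over $q_n = n^\epsilon$ coordinates with $\epsilon < \beta/2 < 1/2$ is controlled — this is the standard ``rare and weak'' regime argument and should go through with a Gaussian-perturbation / Ingster-type calculation.

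For the middle regime $0 < b \le \epsilon/2$, where the target rate is $n^{2\epsilon+4b-2}(\log n)^2$, I would borrow the one-sequence construction behind \eqref{onesamplerateS}: fix $\mu$ deterministically with $q_n$ coordinates equal to $s_n$ and the rest zero (this is legal since $q_n \le k_n$), and then on those $q_n$ active coordinates run the one-sequence lower-bound prior for estimating $\sum \theta_i^2$ at signal level $s_n$ over a vector of effective length $q_n$. On the active block, $Q(\mu,\theta) = \frac{s_n^2}{n}\sum_{i \in S}\theta_i^2$, so a lower bound of order $(\log n)^2 q_n^2$ for the squared error of estimating $\sum_{i\in S}\theta_i^2$ (the $(\log n)^2$ factor coming from the fact that the optimal threshold is $\sqrt{2\log n}$ and one cannot reliably detect entries below that level) translates to $\frac{s_n^4}{n^2}(\log n)^2 q_n^2 = n^{4b-2}(\log n)^2 n^{2\epsilon}$, as desired. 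The technical content here is the same sparse-mixture prior used for \eqref{minimaxOne}: put i.i.d. priors on each active $\theta_i$ that are $0$ with high probability and equal to $\sigma\sqrt{2\log n}$ otherwise, calibrated so the number of nonzeros stays $\le k_n$ and the chi-square affinity between the mixture and the null is bounded.

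For the regime $b > \epsilon/2$, where the rate is $n^{\epsilon+6b-2}$, the dominant term is the ``estimation noise'' term and the construction should be a two-point argument perturbing $\theta$ (or $\mu$) by an amount of order $\sigma$ on the $q_n$ simultaneously-active coordinates, against a background where $\mu$ has entries of size $s_n$: then the change in $Q$ is of order $\frac{1}{n} q_n s_n^2 \cdot \sigma s_n = \frac{1}{n} q_n s_n^3$, whose square is $n^{2\epsilon + 6b - 2}$; to get the exponent $\epsilon$ rather than $2\epsilon$ in the final bound one uses independent sign-perturbations on the $q_n$ coordinates and bounds the Bayes risk by $\frac{1}{n^2} \sum_{i \in S}(\text{per-coordinate variance}) = \frac{1}{n^2} q_n s_n^6$, a variance-type (Assouad / second-moment) calculation rather than an affinity calculation. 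I would phrase this as an Assouad-style cube argument over the $q_n$ coordinates. The main obstacle is the bookkeeping to ensure all constructed parameter vectors genuinely lie in $\Omega(\beta, \epsilon, b)$ simultaneously respecting $\|\mu\|_0, \|\theta\|_0 \le k_n$, $\|\cdot\|_\infty \le s_n$, and $\|\mu\star\theta\|_0 \le q_n$ — in particular in the middle regime the sparse $\theta$-prior must have its support confined to the $q_n$ active $\mu$-coordinates, which slightly changes the effective dimension of the one-sequence problem from $n$ to $q_n$ and must be tracked carefully through the affinity bound; and verifying that the three pieces glue continuously at $b = 0$ and $b = \epsilon/2$, mirroring the smooth phase transition noted for the one-sequence case.
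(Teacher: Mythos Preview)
Your constructions for the regimes $b\le 0$ and $b>\epsilon/2$ are essentially correct and close to the paper's. For $b\le 0$ the paper does exactly the Ingster-type mixture you allude to (mixing the common support of $(\mu,\theta)$ uniformly over $\ell(n,q)$); note that a pure two-point argument would fail near $b=0$ because the chi-square distance is $\exp(2q_n s_n^2/\sigma^2)$, which blows up unless you mix over the unknown support. For $b>\epsilon/2$ the paper uses a single two-point perturbation $\theta_i=s$ versus $\theta_i=s-\delta$ with $\delta=\sigma/\sqrt{q_n}$ on all $q_n$ coordinates simultaneously (no Assouad cube), but your per-coordinate variance calculation leads to the same rate $n^{\epsilon+6b-2}$.

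There is, however, a genuine gap in the middle regime $0<b\le\epsilon/2$. You fix $\mu$ on $q_n$ coordinates and then claim a lower bound of order $q_n^2(\log n)^2$ for the reduced one-sequence problem of estimating $\sum_{i\in S}\theta_i^2$ with $|S|=q_n$. That claim is false: once the ambient dimension of the one-sequence problem equals $q_n$, the problem is \emph{dense} (every coordinate may be nonzero), and the sparse one-sequence rate $k^2(\log n)^2$ from \eqref{onesamplerateS} does not apply. The minimax rate for $\sum_{i\in S}\theta_i^2$ over a fully dense block of size $q_n$ with $\|\theta\|_\infty\le s_n$ and $b>0$ is only of order $q_n s_n^2$, which after multiplying by $s_n^4/n^2$ yields $n^{\epsilon+6b-2}$ --- strictly weaker than the target $n^{2\epsilon+4b-2}(\log n)^2$ precisely when $b<\epsilon/2$.

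The fix, and what the paper does, is to fix $\mu$ on $k_n$ (not $q_n$) coordinates at level $s_n$, and then place a mixture prior on $\theta$ that selects $q_n$ of those $k_n$ positions uniformly at random and sets them to $\rho=\sigma\sqrt{(\beta-2\epsilon)\log n}$. This reduces to a one-sequence problem of dimension $k_n$ with sparsity $q_n$, and now the sparse-regime hypothesis $\epsilon<\beta/2$ (i.e., $q_n<\sqrt{k_n}$) is exactly what makes the chi-square affinity bounded while delivering the $(\log n)^2$ factor. Note in particular that $\|\mu\|_0=k_n$ is allowed by $\Omega(\beta,\epsilon,b)$, and the constraint $\|\mu\star\theta\|_0\le q_n$ is automatically satisfied since $\theta$ has only $q_n$ nonzeros, all inside the support of $\mu$. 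Your remark that ``the effective dimension of the one-sequence problem changes from $n$ to $q_n$'' should therefore read ``from $n$ to $k_n$''; this is where the condition $\epsilon<\beta/2$ actually enters the lower bound.
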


Crucial to the derivation of lower bound is the Constrained Risk Inequality (CRI) given in \cite{BrownLow1996}.  To apply CRI, it suffices to construct two priors supported on $\Omega(\beta, \epsilon, b)$ that have small chi-square distance but a large difference in the expected values of the resulting quadratic functionals.  The cases $b\leq\frac{\epsilon}{2}$ and $b>\frac{\epsilon}{2}$ correspond to choices of distinct pairs of priors.  For $b>\frac{\epsilon}{2}$, the CRI boils down to the standard technique of inscribing a hardest hyperrectangle, with the Bayes risk for a simple prior supported on the hyperrectangle being a lower bound for the minimax risk.  Nevertheless, the case $b\leq\frac{\epsilon}{2}$ requires the use of a rich collection of hyperrectangles and a mixture prior which mixes over the vertices of the hyperrectangles in this collection.  Mixing increases the difficulty of the Bayes estimation problem and is needed here to attain a sharp lower bound.  

\begin{Remark}{\rm
Combining \eqref{Q3rate}, \eqref{Q0rate} and \eqref{srate}, we see that when $0<\epsilon<\frac{\beta}{2}$ and $0<\beta<\half$, $\Qhat_2$ attains the optimal rate of convergence over $\Omega(\beta, \epsilon, b)$ when $b>0$.  In contrast, $\Qhat_0$ attains the optimal rate of convergence over $\Omega(\beta, \epsilon, b)$ when $b\leq 0$.  
}
\end{Remark}

\begin{Remark}{\rm
Interestingly, there is no dependence on $\beta$ in the minimax rate of convergence $\gamma_n(\beta, \epsilon, b)$ in the sparse regime, except for the requirement that $0<\epsilon<\frac{\beta}{2}$.  
}
\end{Remark}

\subsection{Estimation in the Dense Regime}
\label{denseest}
We now consider estimating $Q(\mu, \theta)$ in the dense regime, where $q_n$ is calibrated as in expression \eqref{simsparsity} with $\frac{\beta}{2}\leq\epsilon\leq\beta$.  The dense regime is subdivided into two cases: the moderately dense case with $\frac{\beta}{2}\leq\epsilon\leq\frac{3\beta}{4}$ and the strongly dense case with $\frac{3\beta}{4}<\epsilon\leq\beta$.  

In the dense regime, the estimator $\Qhat_2$ defined in \eqref{estimator2} is suboptimal, as the thresholding step in both $X_i^2$ and $Y_i^2$ ends up thresholding too many coordinates when the signal is weak.  Note that the simultaneous sparsity $q_n \gg \sqrt{k_n}$ suggests that for each coordinate $i$ with $\mu_i\neq 0$, it is usually the case that $\theta_i\neq 0$, and vice versa.  Therefore, it is no longer reasonable to perform thresholding on $X_i^2$ and $Y_i^2$ independently.  The additional knowledge of relatively high proportion of simultaneous nonzero entries suggests that whenever we observe a large value of $X_i^2$ (an implication of $\mu_i\neq 0$), then even if $Y_i^2$ is small, we should still estimate $\mu_i^2\theta_i^2$ rather than setting it equals zero.  The same reasoning applies to the case where $X_i^2$ is small but $Y_i^2$ is large.  

To construct an optimal estimator in the dense regime, we again borrow some intuition from the estimation of the quadratic functional $Q(\theta) = \frac{1}{n}\sum\theta_i^2$ in the one-sequence case.  We consider the family of parameter spaces given in \eqref{oneSpace}, but for $\half\leq\beta<1$.  The minimax rate of convergence once again satisfies \eqref{minimaxOne}, but with
\begin{equation}
\gamma_n(\beta, b) = \left\{
\begin{array}{ll}
n^{2\beta+4b-2} &\text{if }b\leq \frac{1-2\beta}{4}, \\
n^{-1} &\text{if }\frac{1-2\beta}{4} < b \leq \frac{1-\beta}{2}, \\
n^{\beta+2b-2} &\text{if }b>\frac{1-\beta}{2}. \label{onesamplerateD}
\end{array}\right.
\end{equation}
When $\half\leq\beta<1$, we have $k_n\gg\sqrt{n}$, meaning that many coordinates of $\theta$ is nonzero.  The characterization of weak and strong signal is no longer $b<0$ versus $b\geq 0$ as in the case of $0<\beta<\half$, but $b\leq \frac{1-2\beta}{4}$ versus $b> \frac{1-2\beta}{4}$.   That is, given the same signal strength $b$, the vast number of nonzero coordinates of $\theta$ when $k_n\gg\sqrt{n}$ collectively represents stronger signal as compared to the case when $k_n\ll\sqrt{n}$.  Thus, the threshold of ``strong" signal as encoded by $b$ is lowered when $k_n\gg \sqrt{n}$.  It is not surprising that for the range of weak signal $b\leq\frac{1-2\beta}{4}$, the estimator $\Qhat_0 = 0$ is optimal.  On the other hand, when $b>\frac{1-2\beta}{4}$, the optimal estimator for $Q(\theta)$ is the unbiased estimator
\begin{equation}
\Qhat_3= \frac{1}{n}\sum_{i=1}^n(Y_i^2-\sigma^2). \label{estimator3}
\end{equation}

An optimal estimator is often one that strikes an appropriate balance between bias and variance in its mean squared error.  The estimators $\Qhat_0$ and $\Qhat_3$ represent two extremities in terms of bias-variance tradeoff.  We see that $\Qhat_0$ that is optimal for exceedingly weak signal has zero variance, while $\Qhat_3$ that is optimal for sufficiently strong signal has zero bias.  Due to the denseness of nonzero coordinates when $k_n\gg\sqrt{n}$, one could not afford to introduce bias to the estimator in the hope of achieving smaller variance.  Without additional information about the sparsity structure, the unbiased estimator $\Qhat_3$ is necessary for optimal estimation of $Q(\theta)$.

We now return to the two-sequence setting for the estimation of $Q(\mu, \theta)$, for the case $\frac{\beta}{2}\leq\epsilon\leq\beta$ and $0<\beta<\half$.  Although the signal for individual sequences is sparse ($k_n\ll\sqrt{n}$), the simultaneous signal is dense in the sense that $q_n\gg \sqrt{k_n}$.  The intuition garnered from the one-sequence case motivates the following estimator:
\begin{equation}
\Qhat_4 = \frac{1}{n}\sum_{i=1}^n\big[(X_i^2-\sigma^2)(Y_i^2-\sigma^2)\1(X_i^2\vee Y_i^2>\sigma^2\tau_n)-\eta\big],  \label{estimator4}
\end{equation}
where
\[\eta = E_{(0, 0)}[(X_i^2-\sigma^2)(Y_i^2-\sigma^2)\1(X_i^2\vee Y_i^2>\sigma^2\tau_n)].\]
From $\Qhat_4$, we see that each term $\mu_i^2\theta_i^2$ is estimated unbiasedly (modulo $\eta$) by $(X_i^2-\sigma^2)(Y_i^2-\sigma^2)$ whenever at least one of $X_i^2$ and $Y_i^2$ is sufficiently large.  This is in accordance with our previous argument that estimation should be done whenever we have at least one large value of $X_i^2$ or $Y_i^2$.  The threshold $\tau_n$ is a tuning parameter whose value is yet to be determined during the analysis of the mean squared error of $\Qhat_4$, though it turns out that $\tau_n = c\log n$ for any $c\geq 4$ attains the optimal rate of convergence.  The subtraction of $\eta$ from $(X_i^2-\sigma^2)(Y_i^2-\sigma^2)\1(X_i^2\vee Y_i^2>\sigma^2\tau_n)$ is needed because the majority of coordinates $i$ has $\mu_i = \theta_i = 0$.  A biased estimator for these coordinates unavoidably inflates the estimation risk.  Nevertheless, due to the rarity of nonzero coordinates in individual sequences, the naive unbiased estimator 
\begin{equation}
\Qhat_5 = \frac{1}{n}\sum_{i=1}^n(X_i^2-\sigma^2)(Y_i^2-\sigma^2)
\end{equation}
is not optimal, as one would have expected.  A thresholding step $\1(X_i^2\vee Y_i^2>\sigma^2\tau_n)$ is needed to guard against estimating entries with $\mu_i = \theta_i = 0$ with noise.

Note that $\Qhat_2$ defined in \eqref{estimator2} can be written as
\[\frac{1}{n}\sum_{i=1}^n[(X_i^2-\sigma^2\tau_n)\1(X_i^2>\sigma^2\tau_n) - \mu_0][(Y_i^2-\sigma^2\tau_n)\1(Y_i^2>\sigma^2\tau_n) - \theta_0].\]
Compare this expression with $\Qhat_4$, we see that when both $X_i^2$ and $Y_i^2$ are large, the term $\mu_i^2\theta_i^2$ is roughly estimated as $(X_i^2-\sigma^2\tau_n)(Y_i^2-\sigma^2\tau_n)$.  Moreover, $(X_i^2-\sigma^2\tau_n)(Y_i^2-\sigma^2\tau_n)$ is a biased estimator of $\mu_i^2\theta_i^2$ when $\tau_n>1$.

We present an upper bound on the mean squared error of $\Qhat_4$ in the following theorem.
\begin{Theorem}[Dense Regime: Upper Bound]\label{two-inf-dense-up}
For $b>0$, the estimator $\Qhat_4$ as in \eqref{estimator4} with $\tau_n = 4\log n$ satisfies
\begin{equation}
\sup_{(\mu, \theta)\in\Omega(\beta, \epsilon, b)}E_{(\mu, \theta)}(\Qhat_4-Q(\mu, \theta))^2 \leq C\max\Big\{n^{2\epsilon-2}(\log n)^4, n^{\epsilon+6b-2}, n^{\beta+4b-2}\Big\}. \label{Q4rate}
\end{equation}
\end{Theorem}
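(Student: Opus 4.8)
The plan is to bound the mean squared error of $\Qhat_4$ by the standard bias-variance decomposition, writing
\[
E_{(\mu,\theta)}(\Qhat_4 - Q(\mu,\theta))^2 = \big(E_{(\mu,\theta)}\Qhat_4 - Q(\mu,\theta)\big)^2 + \Var_{(\mu,\theta)}(\Qhat_4),
\]
and then controlling each piece coordinate by coordinate, exploiting that the summands in $\Qhat_4$ are independent across $i$. For the variance, independence gives $\Var(\Qhat_4) = n^{-2}\sum_i \Var(W_i)$, where $W_i = (X_i^2-\sigma^2)(Y_i^2-\sigma^2)\1(X_i^2\vee Y_i^2 > \sigma^2\tau_n) - \eta$. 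I would split the index set into the (at most $q_n$) coordinates where $\mu_i\theta_i\ne 0$, the coordinates where exactly one of $\mu_i,\theta_i$ is nonzero (at most $2k_n$ of them), and the remaining null coordinates where $\mu_i=\theta_i=0$. On a null coordinate, $W_i$ has mean zero by definition of $\eta$, and its variance is $O(1)$ times the probability that a pair of independent $\chi^2_1$-type variables exceeds $\tau_n = 4\log n$, which is polynomially small in $n$; summing over all $n$ such coordinates contributes the $n^{-1}\cdot\text{poly}(\log n)$-type term that after squaring-and-dividing feeds into $n^{2\epsilon-2}(\log n)^4$ only through the bias, so the null variance is negligible. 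On a simultaneously-nonzero coordinate, $|\mu_i|,|\theta_i|\le s_n = n^b$, so crude moment bounds give $\Var(W_i) = O(s_n^4 + s_n^2 + 1) = O(n^{4b})$ when $b>0$; there are at most $q_n = n^\epsilon$ such coordinates, yielding a contribution $O(n^{-2}\cdot n^\epsilon \cdot n^{4b}) = O(n^{\epsilon+4b-2})$, which is dominated by $n^{\epsilon+6b-2}$ since $b>0$. On a singleton coordinate (say $\mu_i\ne 0$, $\theta_i = 0$), one has $E[X_i^2-\sigma^2] = \mu_i^2$ and $E[Y_i^2-\sigma^2]=0$, and the contribution of these $\le 2k_n$ terms, again bounding crudely, is $O(n^{-2}\cdot k_n\cdot n^{4b}) = O(n^{\beta+4b-2})$, matching the third term on the right of \eqref{Q4rate}.

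For the bias, note $E_{(\mu,\theta)}W_i = E[(X_i^2-\sigma^2)(Y_i^2-\sigma^2)\1(X_i^2\vee Y_i^2>\sigma^2\tau_n)] - \eta$. Write $\1(X_i^2\vee Y_i^2>\sigma^2\tau_n) = 1 - \1(X_i^2\le\sigma^2\tau_n)\1(Y_i^2\le\sigma^2\tau_n)$; since $E[(X_i^2-\sigma^2)(Y_i^2-\sigma^2)] = \mu_i^2\theta_i^2$ by independence, we get
\[
E_{(\mu,\theta)}W_i = \mu_i^2\theta_i^2 - E\big[(X_i^2-\sigma^2)(Y_i^2-\sigma^2)\1(X_i^2\le\sigma^2\tau_n)\1(Y_i^2\le\sigma^2\tau_n)\big] - \eta,
\]
and $\eta$ itself is the value of that same middle expectation when $\mu_i=\theta_i=0$ (up to a sign / rearrangement I would make precise). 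So the per-coordinate bias is $\mu_i^2\theta_i^2 - E_{(\mu,\theta)}[\cdots] + E_{(0,0)}[\cdots]$, which vanishes on null coordinates and must be bounded on the non-null ones. The key estimate is that for a single Gaussian $X_i\sim N(\mu_i,\sigma^2)$ with $|\mu_i|\le n^b$, the truncated moment $E[(X_i^2-\sigma^2)\1(X_i^2\le\sigma^2\tau_n)]$ differs from $\mu_i^2$ by an amount that is itself bounded by a constant multiple of $\mu_i^2$ (and in fact is small when $\mu_i^2 \ll \tau_n$); the analogous statement holds for the product. Hence the per-coordinate bias is $O(\mu_i^2\theta_i^2)$ in the worst case, but I would want the sharper claim that the total bias $n^{-1}\sum_i(\text{per-coordinate bias})$ is $O(n^{-1}q_n s_n^4) = O(n^{\epsilon+4b-1})$, whose square is $n^{2\epsilon+8b-2}$; this, however, is not one of the three terms on the right-hand side, so the truncation must be shown to help. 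The genuinely delicate point is therefore to show that the truncation $\1(X_i^2\le\sigma^2\tau_n)$ only removes a tail contribution and that $E_{(\mu,\theta)}[(X_i^2-\sigma^2)\1(X_i^2\le\sigma^2\tau_n)] - E_{(0,0)}[\cdots]$ is close to $\mu_i^2$ with an error of order $s_n^2 \cdot(\text{small})$; combined over at most $q_n$ coordinates for the bias this should land on $n^{2\epsilon-2}(\log n)^4$ and $n^{\epsilon+6b-2}$, with the $(\log n)^4$ emerging precisely from $\tau_n^2 = 16(\log n)^2$ entering the tail bound twice.

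The main obstacle, as I see it, is the bias analysis on non-null coordinates, specifically obtaining the right power of $\log n$ and the right power of $n^b$ simultaneously. One has to carefully account for three different "sources" of error — the tail of $X_i^2$ beyond $\sigma^2\tau_n$, the tail of $Y_i^2$, and the joint null-correction term $\eta$ — and see how they combine depending on whether $b$ is small (so that $s_n^2 = n^{2b}$ is smaller than $\tau_n \asymp \log n$, in which case truncation essentially never bites on a true signal) or large (in which case the truncation does remove part of the signal and one must show the removed part is controlled). I would organize this by a case split on the relative size of $n^{2b}$ and $\log n$, mirroring the phase-transition structure, use the elementary inequality $\1(A\cup B)\le \1(A) + \1(B)$ to decouple the two tails inside expectations, and for each tail invoke standard Gaussian/$\chi^2$ tail and truncated-moment bounds (e.g. $E[Z^2\1(|Z|>t)] \le Ct e^{-t^2/2}$ type estimates, applied after centering). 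The variance terms, by contrast, I expect to be routine given independence and the boundedness $\|\mu\|_\infty,\|\theta\|_\infty\le s_n$, and the null-coordinate contributions should be absorbed throughout using that $\tau_n = 4\log n$ makes $P(X_i^2 > \sigma^2\tau_n) = O(n^{-2}\sqrt{\log n})$, which is more than enough decay to beat the factor $n$ from summing over null coordinates.
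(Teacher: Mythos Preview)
Your overall architecture matches the paper's: bias--variance decomposition, the rewrite $\1(X_i^2\vee Y_i^2>\sigma^2\tau_n)=1-\1(X_i^2\le\sigma^2\tau_n)\1(Y_i^2\le\sigma^2\tau_n)$ so that the per-coordinate bias becomes a difference of products of one-dimensional truncated moments, and a variance split by coordinate type. The variance analysis is essentially right, with one slip: on a simultaneously nonzero coordinate the leading term of $\Var(W_i)$ is $4\sigma^2(\mu_i^4\theta_i^2+\mu_i^2\theta_i^4)=O(s_n^6)$, not $O(s_n^4)$; this is precisely where $n^{\epsilon+6b-2}$ comes from (it is a variance contribution, not a bias contribution).

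The real gap is in the bias. After your factorization, the per-coordinate bias equals
\[
A(\mu_i)A(\theta_i) - A(0)^2,\qquad A(t) := E_t\big[(X^2-\sigma^2)\1(X^2\le\sigma^2\tau_n)\big],
\]
since $\eta = -A(0)^2$. You then compare $A(\mu_i)$ to $\mu_i^2$, but that is the wrong reference point: on the truncation region $|X^2-\sigma^2|\le\sigma^2\tau_n$, so $|A(\mu_i)|\le\sigma^2\tau_n$ regardless of how large $\mu_i$ is, while $\mu_i^2$ can be as big as $n^{2b}$. The correct comparison is to $A(0)$, and the key estimate (proved as a separate lemma in the paper) is
\[
|A(t)-A(0)| \le \min\{t^2,\,3\sigma^2\tau_n\}.
\]
The $3\sigma^2\tau_n$ branch of this minimum is what you are missing: it caps each factor of the bias at $O(\tau_n)$ \emph{uniformly} in $\mu_i,\theta_i$. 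Writing $A(\mu_i)A(\theta_i)-A(0)^2$ via $AB-ab=(A-a)(B-b)+a(B-b)+b(A-a)$ then gives a per-coordinate bias of at most $C\tau_n^2 + |A(0)|\cdot C\tau_n$, hence the total squared bias from the $q_n$ simultaneous coordinates is at most $(n^{-1}q_n\cdot C\tau_n^2)^2 = Cn^{2\epsilon-2}(\log n)^4$, with no $b$-dependence at all. Once you have this, no case split on $n^{2b}$ versus $\log n$ is needed. Singleton coordinates contribute negligibly to the bias because one of $A(\mu_i)-A(0)$, $A(\theta_i)-A(0)$ vanishes there and only the cross term $A(0)\cdot(A(\cdot)-A(0))$ survives, with $|A(0)| = 2\sigma^2\tau_n^{1/2}\phi(\tau_n^{1/2}) = O(n^{-2}\sqrt{\log n})$.

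Finally, your proposed decoupling via $\1(A\cup B)\le\1(A)+\1(B)$ would not work as stated because $(X^2-\sigma^2)(Y^2-\sigma^2)$ changes sign, and it is in any case unnecessary: the complement-and-factorize step you already wrote down produces an exact product of one-dimensional truncated expectations, so independence has already decoupled the two coordinates.
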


We now provide a matching lower bound to complement the upper bound in the dense regime.  
\begin{Theorem}[Dense Regime: Lower Bound]\label{two-inf-dense}
Let $\frac{\beta}{2}\leq\epsilon\leq\beta$ and $0<\beta<\half$.  Then
\[R^*(n, \Omega(\beta, \epsilon, b)) \geq c\gamma_n(\beta, \epsilon, b),\]
where
\begin{equation}
\gamma_n(\beta, \epsilon, b) = \left\{
\begin{array}{ll}
n^{2\epsilon+8b-2} &\text{if }b\leq 0, \\
n^{2\epsilon-2}(\log n)^4 &\text{if }0<b\leq\frac{2\epsilon-\beta}{4}, \\
n^{\beta+4b-2} &\text{if }\frac{2\epsilon-\beta}{4}<b\leq\frac{\beta-\epsilon}{2}, \\
n^{\epsilon+6b-2} &\text{if }b>\frac{\beta-\epsilon}{2}, \label{drate}
\end{array}\right.
\end{equation}
when $\frac{\beta}{2}\leq\epsilon\leq\frac{3\beta}{4}$, and
\begin{equation}
\gamma_n(\beta, \epsilon, b) = \left\{
\begin{array}{ll}
n^{2\epsilon+8b-2} &\text{if }b\leq 0, \\
n^{2\epsilon-2}(\log n)^4 &\text{if }0<b\leq\frac{\epsilon}{6}, \\
n^{\epsilon+6b-2} &\text{if }b>\frac{\epsilon}{6}. \label{udrate}
\end{array}\right.
\end{equation}
when $\frac{3\beta}{4}<\epsilon\leq\beta$.
\end{Theorem}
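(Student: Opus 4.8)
The plan is to prove the lower bound exactly as in Theorem~\ref{two-inf-sparse}, via the Constrained Risk Inequality (CRI) of \cite{BrownLow1996}. For each sub-range of $b$ appearing in \eqref{drate}--\eqref{udrate} I will produce a pair of priors $\pi_0,\pi_1$ supported on $\Omega(\beta,\epsilon,b)$ whose induced marginal laws on $(X,Y)$ have chi-square distance bounded by a numerical constant, while $(E_{\pi_1}Q(\mu,\theta)-E_{\pi_0}Q(\mu,\theta))^2\asymp\gamma_n(\beta,\epsilon,b)$; CRI then yields $R^*(n,\Omega(\beta,\epsilon,b))\ge c\,\gamma_n(\beta,\epsilon,b)$. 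Because $\gamma_n$ in both the moderately dense and the strongly dense case is the pointwise maximum of a short list of elementary rates, it suffices to realize each rate by one construction (valid on the range of $b$ where it is finite) and then, as a bookkeeping step, check that the stated $\gamma_n$ is the maximum on each regime, with the degenerate boundaries $\epsilon=\beta/2$ and $\epsilon=3\beta/4$ verified by direct substitution (this is where the middle term $n^{\beta+4b-2}$ drops out of the strongly dense rate).

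I would use four constructions. (i) For $b\le 0$ (rate $n^{2\epsilon+8b-2}$): take $\pi_0=\delta_{(0,0)}$ and let $\pi_1$ put magnitude $s_n$ on both $\mu$ and $\theta$ along a uniformly random size-$q_n$ index set $S$, zero elsewhere. Every draw has $Q=q_ns_n^4/n$, so the functional gap squared is $n^{2\epsilon+8b-2}$; the chi-square distance equals $E_{S,S'}\exp(2s_n^2|S\cap S'|/\sigma^2)-1$, and since $b\le 0$ makes $e^{2s_n^2/\sigma^2}=O(1)$ while $|S\cap S'|$ is dominated in MGF by $\mathrm{Bin}(q_n,q_n/n)$, this is $\exp(O(n^{2\epsilon-1}))-1=O(1)$. (ii) For $b>0$ (rate $n^{2\epsilon-2}(\log n)^4$): the same random-support mixture but with common magnitude $m$ satisfying $m^2=a\sigma^2\log n$; then $Q=q_nm^4/n\asymp n^{\epsilon-1}(\log n)^2$, and the chi-square bound $\exp(q_n^2 n^{2a}/n)$ is $O(1)$ precisely when $a\le(1-2\epsilon)/2$, which is why the plateau constant hinges on $1-2\epsilon>0$; this is dominant on the stated interval of $b$. (iii) For $b$ large (rate $n^{\epsilon+6b-2}$): inscribe a hardest hyperrectangle---fix a size-$q_n$ support $S$, set $\mu=\theta=m\mathbf{1}_S$, and compare $m=s_n$ with $m=s_n(1-\delta)$ for $\delta=cn^{-b-\epsilon/2}$; the chi-square distance is $\exp(2q_ns_n^2\delta^2/\sigma^2)-1=O(1)$ and the gap is $\asymp\delta q_ns_n^4/n\asymp n^{\epsilon/2+3b-1}$. (iv) For the middle term $n^{\beta+4b-2}$ (present only in the moderately dense regime): fix a size-$k_n$ set $A$ with $\mu_i=s_n$ on $A$ (so the $\mu$-support is fully detectable and identical under both priors), and for $i\in A$ draw $\theta_i$ i.i.d.\ equal to $\delta=\sigma$ with probability $p_j$ and to $0$ otherwise, with $p_1=2p_0$ and $p_0\asymp k_n^{-1/2}$, truncating on $\{\|\theta\|_0\le q_n\}$ (feasible since $\epsilon\ge\beta/2$ gives $k_n^{-1/2}\le q_n/k_n$). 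Here only the $\theta$-law changes, so the chi-square distance factorizes as $(1+\chi^2(g_{p_1},g_{p_0}))^{k_n}-1$ with $\chi^2(g_{p_1},g_{p_0})\le 2p_0^2(e^{\delta^2/\sigma^2}-1)$, hence is $O(1)$ when $k_np_0^2\asymp 1$, while the functional gap squared is $(k_ns_n^2\delta^2p_0/n)^2\asymp k_ns_n^4/n^2=n^{\beta+4b-2}$.

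I expect the genuinely new step---and the crux---to be construction (iv): recognizing that the extremal instance behind $n^{\beta+4b-2}$ has a fully \emph{detectable} $\mu$ but an uncertain \emph{number} of \emph{below-noise} ($\delta\asymp\sigma$, not $\asymp\sqrt{\log n}$) simultaneous $\theta$-signals, so the irreducible difficulty is estimating a $\mathrm{Bin}(k_n,p)$ count rather than locating strong signals; this is why the individual sparsity $k_n$ rather than $q_n$ enters, and why this term appears only when $\epsilon\le 3\beta/4$. The remaining obstacles are technical rather than conceptual: the MGF control of the hypergeometric overlap $|S\cap S'|$ in (i)--(ii) (and the observation that the magnitude $\sqrt{(1-2\epsilon)\log n}$, and no larger, is exactly what keeps the chi-square bounded), and showing in (iv) that truncating the Bernoulli-$\theta$ prior to $\{\|\theta\|_0\le q_n\}$ perturbs the chi-square distance and the functional gap by at most constant factors (immediate since $\mathrm{Bin}(k_n,p_1)\le q_n$ with probability $1-o(1)$). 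Assembling the four bounds and checking the pointwise-maximum identities then completes the proof.
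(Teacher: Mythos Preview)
Your overall scheme and constructions (i)--(iii) coincide with the paper's Cases~2, 5 and~3, respectively (in (iii) the paper perturbs only $\theta$ from $s$ to $s-\delta$ with $\delta=\sigma q_n^{-1/2}$, but your symmetric perturbation is equivalent).  The genuine divergence is in construction (iv) for the rate $n^{\beta+4b-2}$.  The paper, like you, fixes $\mu=s_n\mathbf 1_A$ on a size-$k_n$ set $A$, but for $\theta$ it compares the zero vector against a mixture that selects a uniformly random size-$q_n$ subset $I\subset A$ and sets $\theta_i=\pm\rho\,\1(i\in I)$ with an \emph{independent random sign}, where $\rho=(k_n/q_n^2)^{1/4}$.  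The sign randomization is the crux: it replaces the per-overlap factor $e^{\rho^2/\sigma^2}$ by $\cosh(\rho^2/\sigma^2)\approx 1+\rho^4/(2\sigma^4)$, so the chi-square affinity becomes $\bigl(1+\tfrac{q_n}{k_n}\cdot\tfrac{\rho^4}{2\sigma^4}\bigr)^{q_n}=\bigl(1+\tfrac{1}{2\sigma^4 q_n}\bigr)^{q_n}=O(1)$, while $Q(\mu,\theta)$ is the \emph{constant} $\tfrac{1}{n}q_n s_n^2\rho^2=\tfrac{1}{n}s_n^2\sqrt{k_n}$ under the alternative, squaring to $n^{\beta+4b-2}$.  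Because $Q$ is deterministic under each prior and $\|\theta\|_0=q_n$ exactly, CRI applies verbatim with no truncation and no concentration argument.

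Your Bernoulli-comparison route reaches the same rate and makes the intuition that $k_n$ (not $q_n$) governs this phase very explicit, but it carries two loose ends you should tighten.  First, $Q$ is random under both of your priors, so the Brown--Low inequality in the form the paper quotes does not apply directly; you need either its full prior-vs-prior version together with the (easy) observation that $\Var_{\pi_j}Q\asymp s_n^4 k_n^{1/2}/n^2\ll (E_{\pi_1}Q-E_{\pi_0}Q)^2\asymp s_n^4 k_n/n^2$, or a conditioning step to make $Q$ nearly constant.  Second, your claim that truncation to $\{\|\theta\|_0\le q_n\}$ is harmless because $\mathrm{Bin}(k_n,p_1)\le q_n$ with probability $1-o(1)$ fails at the boundary $\epsilon=\beta/2$, where $q_n=\sqrt{k_n}$ but $k_n p_1\asymp 2\sqrt{k_n}$; you must shrink $p_0$ by a fixed constant to restore this.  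Both are routine, but the paper's sign-mixing construction sidesteps them entirely.
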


The minimax rates of convergence display different phase transitions within two subdivisions of the dense regime.  In the moderately dense regime where $\frac{\beta}{2}\leq\epsilon\leq\frac{3\beta}{4}$, there are phase transitions at $b = \frac{2\epsilon-\beta}{4}$ and $b = \frac{\beta-\epsilon}{2}$, given in \eqref{drate}.  Note that $\frac{2\epsilon-\beta}{4}\leq\frac{\beta-\epsilon}{2}$ if and only if $\epsilon\leq\frac{3\beta}{4}$.  In the strongly dense regime where $\epsilon>\frac{3\beta}{4}$, the phase $\frac{2\epsilon-\beta}{4}<b\leq\frac{\beta-\epsilon}{2}$ is non-existent, and we only have one intermediate phase $0<b\leq\frac{\epsilon}{6}$, given in \eqref{udrate}.

We establish the lower bound by constructing least favorable priors and applying CRI.  Except for the rate $n^{\epsilon+6b-2}$ which is obtained through the inscription of a hardest hyperrectangle, all other cases require some forms of mixing over the vertices of a rich collection of hyperrectangles.  

\begin{Remark}{\rm
Combining \eqref{Q0rate}, \eqref{Q4rate}, \eqref{drate}, and \eqref{udrate}, we see that for the parameter space $\Omega(\beta, \epsilon, b)$ with $\frac{\beta}{2}\leq\epsilon\leq\beta<\half$, $\Qhat_4$ attains the minimax rate of convergence when $b>0$.  In contrast, $\Qhat_0 = 0$ attains the minimax rate of convergence when $b\leq 0$.
}
\end{Remark}

\begin{Remark}{\rm
Similar to the sparse regime, there is no dependence on $\beta$ in the minimax rate of convergence $\gamma_n(\beta, \epsilon, b)$ in the strongly dense regime, except for the requirement that $\frac{3\beta}{4}<\epsilon\leq\beta$.  In contrast, $\gamma_n(\beta, \epsilon, b)$ does depend explicitly on $\beta$ in the moderately dense regime $\frac{\beta}{2}\leq\epsilon\leq\frac{3\beta}{4}$.
}
\end{Remark}

\begin{Remark}{\rm
For either the sparse or dense regime, i.e., $0<\epsilon\leq\beta$, the minimax rate of convergence of $Q(\mu, \theta)$ over $\Omega(\beta, \epsilon, b)$ diverges to infinity when $b>\frac{2-\epsilon}{6}$.  Hence when the signal strength is too large, it is impossible to consistently estimate $Q(\mu, \theta)$.
}
\end{Remark}

\begin{Remark}{\rm
For either the sparse or dense regime, i.e., $0<\epsilon\leq\beta$, when $r_n = s_n = \sigma\sqrt{d\log n}$ for some $d>0$, we have 
\[\inf_{\Qhat}\sup_{\substack{(\mu, \theta): \|\mu\|_0\leq k_n, \|\theta\|_0\leq k_n, \\\|\mu\|_\infty\leq \sigma\sqrt{d\log n}, \|\theta\|_\infty\leq \sigma\sqrt{d\log n}}}E_{(\mu, \theta)}(\Qhat-Q(\mu, \theta))^2 \asymp n^{2\epsilon-2}(\log n)^4.\]
The minimax rate of estimation is attained by $\Qhat_0, \Qhat_2$ for $0<\epsilon\leq\beta$ and by $\Qhat_4$ for $\frac{\beta}{2}\leq\epsilon\leq\beta$.
}
\end{Remark}

\subsection{Phase Transitions in the Minimax Rates of Convergence}\label{phaseplot}
We see from Sections~\ref{sparseest} and \ref{denseest} that within each regime, the minimax rates of convergence exhibit several phase transitions.  In addition, each transition is governed by a change in the relative magnitudes of the sparsity parameter $\beta$, the simultaneous sparsity parameter $\epsilon$, and the signal strength parameter $b$.  In fact, it is the way phase transitions occur within each regime that characterizes the regime itself.  Furthermore, the phase transitions actually display ``continuity" across the boundaries of different regimes.

To depict what we meant graphically, first note that from Sections~\ref{sparseest} and \ref{denseest}, the minimax rate of convergence
\begin{equation}
\gamma_n(\beta, \epsilon, b) \asymp n^{r(\beta, \epsilon, b)},
\end{equation}
modulo a factor involving $\log n$ when applicable.  In Figure~\ref{phase}, we plot the rate exponent $r(\beta, \epsilon, b)$ against $b$ for the sparse, moderately dense, and strongly dense regimes.  

Specifically, fixing $\beta = 0.45$, we plot $r(\beta, \epsilon, b)$ against $b$ for a range of $\epsilon$ values in $(0, \beta)$.  The left panel of Figure~\ref{phase} provides a continuum view of $r(\beta, \epsilon, b)$, as $\epsilon$ increases from 0 to $\beta$.  Each piecewise straight line corresponds to an $\epsilon$ value in the considered range.  To highlight the discrepancy among the three regimes, we color the sparse regime ($0<\epsilon<\frac{\beta}{2}$) in red, the moderately dense regime ($\frac{\beta}{2}\leq\epsilon\leq\frac{3\beta}{4}$) in green, and the strongly dense regime ($\frac{3\beta}{4}<\epsilon\leq\beta$) in blue.  We see that the three regimes have somewhat different behaviors for small positive values of $b$.  In particular, the sparse regime and the strongly dense regime experience two transitions (three different slopes), while the moderately dense regime experience three transitions (four different slopes).  Note that the difference in the number of transitions is restored at the intersection of the blue region and the red region.  Thus, the phase transition is in some sense ``continuous" across the regime boundaries --- the piecewise straight lines corresponding to $r(\beta, \epsilon, b)$'s exhibit smooth transition as $\epsilon$ increases from 0 to $\beta$.  The right panel of Figure~\ref{phase} provides a static view for each regime.  We plot $r(\beta, \epsilon, b)$ against $b$ for three values of $\epsilon$ corresponding to three different regimes: $\epsilon = 0.12$ (sparse regime), $\epsilon = 0.28$ (moderately dense regime), and $\epsilon = 0.4$ (strongly dense regime).  

\begin{figure}[!h]
\centering
\includegraphics[angle=0,width=6.5in]{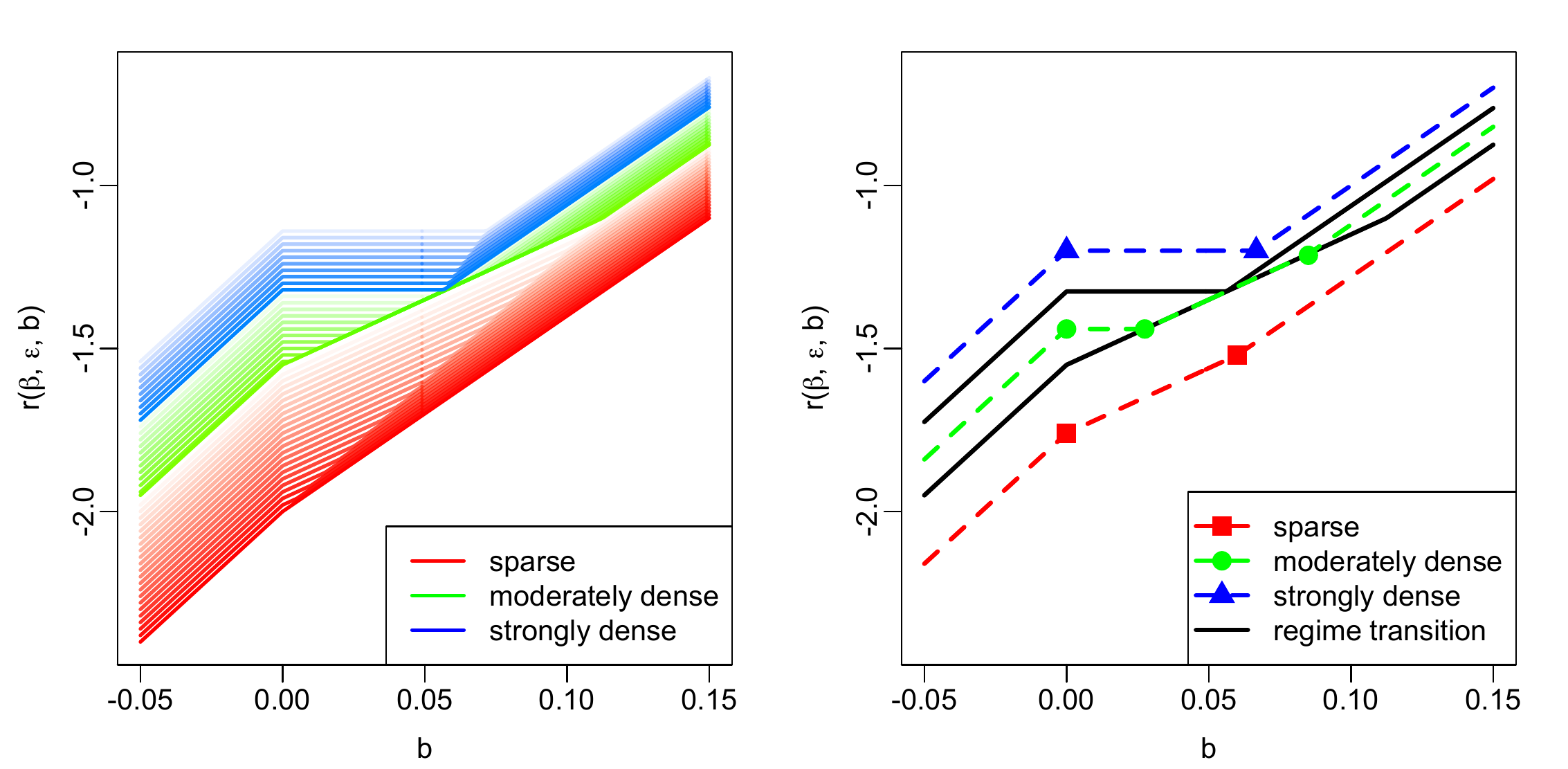}
\caption{Plot of the rate exponent $r(\beta, \epsilon, b)$ against the signal strength $b$.  In the sparse regime (\protect\includegraphics[height=0.4em]{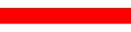}), $r(\beta, \epsilon, b)$ changes in the order $2\epsilon+8b-2, 2\epsilon+4b-2, \epsilon+6b-2$.  In the moderately dense regime (\protect\includegraphics[height=0.4em]{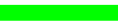}), $r(\beta, \epsilon, b)$ changes in the order $2\epsilon+8b-2, 2\epsilon-2, \beta+4b-2, \epsilon+6b-2$.  In the strongly dense regime (\protect\includegraphics[height=0.4em]{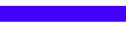}), $r(\beta, \epsilon, b)$ changes in the order $2\epsilon+8b-2, 2\epsilon-2, \epsilon+6b-2$.  Left panel: a continuum view of $r(\beta, \epsilon, b)$ as $\epsilon$ increases from 0 to $\beta = 0.45$ (color changes from red to blue).  Right panel: a static view of each regime: sparse ($\epsilon = 0.12$), moderately dense ($\epsilon = 0.28$), and strongly dense ($\epsilon = 0.4$).  Transition points are indicated by the knots on the dashed lines.}
\label{phase}
\end{figure}

Interestingly, in the two-sequence case, the regions $\{b: b\leq 0\}$ and $\{b: b>0\}$ appears to constitute the regions of weak signal and strong signal, respectively, regardless of the level of simultaneous sparsity.  This is in contrast to the one-sequence case where the dividing line is $b=0$ when $k_n\ll\sqrt{n}$, and $b=\frac{1-2\beta}{4}$ when $k_n\gg\sqrt{n}$.  We caution that this apparent ``reconciliation" in the two-sequence case is simply because the signal strengths are taken to be the same for both sequences $\mu$ and $\theta$ in the simplified results presented above.  

\begin{Remark}\label{commentab}
{\rm
When the signal strengths $r_n = n^a$ and $s_n = n^b$ of $\mu$ and $\theta$ are allowed to differ, it turns out that $\{(a, b): a\wedge b \leq 0\}$ characterizes the region of weak signal when $q_n\ll\sqrt{k_n}$, while $\{(a, b): a\vee b \leq 0\}\cup\{(a, b): a\wedge b \leq \frac{\beta-2\epsilon}{4}\}$ comprises the region of weak signal when $q_n\gg\sqrt{k_n}$.  We refer the readers to Appendix~\ref{generalEst} for more details.
}
\end{Remark}

%%%%%%%%%%%%%%%%%%%%%%%%%%%%%%%%%%%%%%%%%%%%%%%%%%%%%
% Detection
\section{Detection}\label{detection}
There are strong connections between the problem of estimation and that of testing for quadratic functionals in the single Gaussian sequence model setting.  In this setting, the primary goal of the testing problem is to distinguish between $\theta = 0$ and $\theta \neq 0$.  Therefore, one can view the Gaussian sequence model as a signal plus noise model, where $\theta=0$ means that there is no signal, and testing whether $\theta$ is nonzero amounts to a signal detection problem.  It has been shown that test procedures based on estimators of the quadratic functional $Q(\theta)$ can be effective in detecting signals under various specifications of the parameter space (see, for example, \cite{CaiLow2005} and the references therein).

In this section, we explore the links between estimation and testing for quadratic functionals in the Gaussian two-sequence model.  In contrast to the one-sequence case, the main interest of testing in the two-sequence case is to distinguish between $\mu\star\theta = 0$ and $\mu\star\theta\neq 0$, where $\mu\star\theta = (\mu_1\theta_1, \ldots, \mu_n\theta_n)$ is the coordinate-wise product of $\mu$ and $\theta$.  Note that this is in effect a \emph{simultaneous} signal detection problem --- we are only interested in the case where both sequences contain signal.  As we shall see, the estimators $\Qhat_2$ in \eqref{estimator2} and $\Qhat_4$ in \eqref{estimator4} can be useful in the construction of test procedures.  

We will consider the hypothesis testing problem under an asymptotic minimax framework, where the size $n$ of the mean vector $\theta$ is the driving variable.  We first introduce some notation, which is applicable to both the one-sequence and two-sequence testing problem, i.e., think of $\theta$ below as a generic parameter.  Consider the testing problem
\[H_0: \theta\in\Theta_0(n), \qquad H_1: \theta\in\Theta_1(n),\]
where $\Theta_0(n)$ and $\Theta_1(n)$ are parameter spaces whose specification depends on $n$, and  $\Theta_0(n)\cap\Theta_1(n) = \emptyset$.  A test $\psi$ is a rule to accept or reject the null hypothesis based on the observed data.  Therefore, it is a measurable function of the observed data with values in $\{0, 1\}$.  The value $\psi = 1$ means that we reject $H_0$, and the value $\psi=0$ means that we do not reject $H_0$.  We measure the quality of a test $\psi$ by the sum of its maximal type I error (over $\Theta_0(n))$ and maximal type II error (over $\Theta_1(n))$:
\[S_n(\Theta_0(n), \Theta_1(n))(\psi) = \sup_{\theta\in\Theta_0(n)} E_\theta\psi + \sup_{\theta\in\Theta_1(n)}E_\theta(1-\psi).\]
We define the minimax total error probability for the hypothesis testing problem as the infimum of such total error probability over all tests:
\[S_n(\Theta_0(n), \Theta_1(n)) = \inf_\psi S_n(\Theta_0(n), \Theta_1(n))(\psi).\]
The goal is to establish the \emph{asymptotic detection boundary}, i.e., the conditions on $\Theta_0(n)$ and $\Theta_1(n)$ which separate the undetectable region (where $S_n(\Theta_0(n), \Theta_1(n))\longrightarrow 1$ as $n\longrightarrow\infty$) from the detectable region (where $S_n(\Theta_0(n), \Theta_1(n))\longrightarrow 0$ as $n\longrightarrow\infty$).  In the interior of undetectable region, signal is so weak that no tests can successfully separate $H_0$ from $H_1$: the sum of maximal type I and maximal type II error of any tests tends to one as $n$ tends to infinity.  On the contrary, in the interior of detectable region,  it is possible to find a test that has sum of maximal type I and maximal type II error tends to zero as $n$ diverges.   Along with the establishment of the asymptotic detection boundary, we want to find a test $\psi^*$ which can perfectly distinguish between $H_0$ and $H_1$ when $n$ is large, i.e., $S_n(\Theta_0(n), \Theta_1(n))(\psi^*)\longrightarrow 0$ as $n\longrightarrow\infty$, in the detectable region. 

We motivate our formulation of the simultaneous signal detection problem by the corresponding framework that has been established for the one-sequence signal detection problem \citep{Ingster1997, HallJin2010}:
\begin{equation}
H_0: \theta=0, \qquad H_1: \theta\in\Theta_1(\beta, b), \label{oneTestingA}
\end{equation}
where
\begin{equation}
\Theta_1(\beta, b) = \{\theta\in\R^n: \|\theta\|_0 = k_n, \theta\in\{0, \pm s_n\}^n\}, \label{oneTestingA2}
\end{equation}
$k_n = n^\beta$ with $0<\beta<1$ and $s_n = n^b$ with $b\in\R$.  In this formulation, there is no signal under the null hypothesis, whereas signal is constrained in terms of both sparsity and magnitude under the alternative hypothesis.  Intuitively, for a fixed $\beta$, the signal detection problem becomes easier when $b$ increases.  Similarly, for a fixed $b$, an increase in $\beta$ makes the detection problem easier.  In fact, the detection boundary under this framework is a mathematical formula describing the precise relationship between sparsity and signal strength.  It is a curve $b=\rho^*(\beta)$ that partitions $\{(\beta, b): 0<\beta<1, b\in\R\}$ into two regions: the detectable region where $b>\rho^*(\beta)$, and the undetectable region where $b<\rho^*(\beta)$.  

Similar to the problem of estimating the quadratic functional $Q(\theta)$ over the parameter space $\Theta(\beta, b)$ defined in \eqref{oneSpace}, the detection problem \eqref{oneTestingA} behaves differently over two regimes.  In the dense regime, $\half\leq\beta<1$, and the detection boundary is
\[\rho^*(\beta) = \frac{1-2\beta}{4}.\]
A simple test based on the quadratic functional $\Qhat_3$ defined in \eqref{estimator3} can be used here, by letting $\psi^* = \1(\Qhat_3\geq\lambda_n)$, where $\lambda_n\longrightarrow 0$ satisfies $\limsup_{n\longrightarrow\infty} \lambda_n n^{1-\beta-2b} < 1$.  With such choice of $\lambda_n$, $E_0\psi^* + \sup_{\theta\in\Theta_1(\beta, b)}E_\theta(1-\psi^*) \longrightarrow 0$ whenever $b>\rho^*(\beta)$.

In the sparse regime, $0<\beta<\half$. The detection boundary is
\[\rho^*(\beta) = 0.\]
Note that $\rho^*(\beta)$ is independent of $\beta$ in this case. The explanation here is that we are not using our microscope at the right resolution.  A more refined analysis reveals that if we parametrize $s_n$ at the order $s_n = \sigma\sqrt{d\log n}$, $d>0$, rather than at the algebraic order $s_n = n^b$, $b\in\R$, then signal is detectable whenever $d>\rho^*(\beta)$, where
\begin{equation}
\rho^*(\beta) = \left\{
\begin{array}{ll}
2(1-\sqrt{\beta})^2 &\text{if }0<\beta\leq\frac{1}{4}, \label{highercritBoundary}\\
1-2\beta &\text{if } \frac{1}{4}<\beta<\half. \\
\end{array}\right.
\end{equation}
The detection boundary \eqref{highercritBoundary} is the same as that given in \cite{DonohoJin2004}, where a mixture model is used.  A more general result extending to heteroscedatstic normal mixtures can be found in \cite{CaiJeng2011}.  The higher criticism test statistic is known to be optimally adaptive for the detection problem under the mixture model framework as considered in \cite{DonohoJin2004} and \cite{CaiJeng2011}.  Its optimal adaptivity is established in a regression context in \cite{AriasCastro2011}, where the single Gaussian sequence signal detection \eqref{oneTestingA} serves as a special case, whereas the max-type test statistic is only optimal over the region $0<\beta\leq\frac{1}{4}$ (see, e.g., Theorem 1 in \cite{AriasCastro2011}). 

In the interest of connecting testing problem with estimation of quadratic functional, one might wonder if some form of sum-of-squares type test statistic such as that used in the dense regime can be effective in the sparse regime.  A natural candidate to consider is the estimator $\Qhat_1$ of $Q(\theta)$ defined in \eqref{estimator1}.  Indeed, the test $\psi^* = \1(\Qhat_1\geq\lambda_n)$, where $\lambda_n = \frac{\log n}{n}$ can asymptotically distinguish between $H_0$ and $H_1$ if $b>0$, or if $s_n = \sigma\sqrt{d\log n}$ with $d$ sufficiently large.  A rough analysis shows that the test procedure works for $d>4$.  Since we only have an upper bound for the mean and variance of $\Qhat_1$ (and not the exact values), it will be challenging if not impossible to derive the smallest possible value of $d$ where $\psi^*$ can be effective.

To generalize detection problem of the form \eqref{oneTestingA}, \eqref{oneTestingA2} to the two-sequence case, consider the following parameter spaces:
\begin{align}
\Omega_0(\beta, a, b) &= \{(\mu, \theta): \|\mu\|_0\leq k_n, \|\theta\|_0\leq k_n, \mu\in\{0, \pm r_n\}^n, \theta\in\{0, \pm s_n\}^n, \nonumber\\
&\qquad\qquad\quad\|\mu\star\theta\|_0 = 0\}, \nonumber\\
\Omega_1(\beta, \epsilon, a, b) &= \{(\mu, \theta): \|\mu\|_0\leq k_n, \|\theta\|_0\leq k_n, \mu\in\{0, \pm r_n\}^n, \theta\in\{0, \pm s_n\}^n, \nonumber\\
&\qquad\qquad\quad\|\mu\star\theta\|_0 = q_n\}, \label{twoTestingA2}
\end{align}
where $k_n = n^\beta, q_n = n^\epsilon$ with $0<\epsilon\leq\beta<\half$, and $r_n = n^a, s_n = n^b$ with $a, b\in\R$.  Suppose that we want to test
\begin{equation}
H_0: (\mu, \theta)\in\Omega_0(\beta, a, b), \qquad H_1: (\mu, \theta)\in\Omega_1(\beta, \epsilon, a, b). \label{twoTestingA}
\end{equation}
Essentially, we are testing whether $q_n = 0$ on condition that $(\mu, \theta)$ satisfies the required sparsity and signal strength constraints.  It is, perhaps, unsurprising that the testing problem being characterized by two regimes: the sparse regime where $0<\epsilon<\frac{\beta}{2}$, and the dense regime $\frac{\beta}{2}\leq\epsilon\leq\beta$.  As the testing problem now involves four parameters: $a, b, \beta$, and $\epsilon$, it is easier to describe the detectable versus undetectable regions, instead of the detection boundary.

\begin{Theorem}\label{two-det-Q}
We state the detectable and undetectable regions in two cases:
\begin{enumerate}[(a)]
\item In the sparse regime, $0<\epsilon<\frac{\beta}{2}$.  The undetectable region is $\{(a, b): a\wedge b\leq 0\}$, and the detectable region is $\{(a, b): a\wedge b> 0\}$.  The test $\psi^* = \1(\Qhat_2\geq\lambda_n)$, where $\Qhat_2$ is as defined in \eqref{estimator2} with $\tau_n = \log n$, $\lambda_n = \half n^{\epsilon+2a+2b-1}$, asymptotically separates $H_0$ from $H_1$ over the detectable region.
\item In the dense regime, $\frac{\beta}{2}\leq\epsilon\leq\beta$.  The undetectable region is $\{(a, b): a\wedge b<\frac{\beta-2\epsilon}{4}\text{ or }a\vee b\leq 0\}$, and the detectable region is $\{(a, b): a\wedge b> \frac{\beta-2\epsilon}{4}\text{ and }a\vee b>0\}$.  The test $\psi^* = \1(\Qhat_4\geq\lambda_n)$, where $\Qhat_4$ is as defined in \eqref{estimator4} with $\tau_n = 4\log n$, $\lambda_n = \half n^{\epsilon+2a+2b-1}$, asymptotically separates $H_0$ from $H_1$ over the detectable region.
\end{enumerate}
\end{Theorem}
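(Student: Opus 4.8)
The plan is to treat the detectable and undetectable halves of each regime separately. For the detectable part we analyze the announced test $\psi^*=\1(\Qhat\ge\lambda_n)$ (with $\Qhat=\Qhat_2$ in the sparse regime and $\Qhat=\Qhat_4$ in the dense regime) through Chebyshev's inequality; for the undetectable part we construct, in each relevant subregion, a prior $\pi_0$ supported on $\Omega_0(\beta,a,b)$ and a prior $\pi_1$ supported on $\Omega_1(\beta,\epsilon,a,b)$ whose induced marginal laws of $(X,Y)$ merge, so that the Bayes total error --- which lower-bounds $S_n(\Omega_0,\Omega_1)$ --- tends to $1$; concretely we bound it below by $1-\tfrac12\sqrt{\chi^2(P_{\pi_1}\|P_{\pi_0})}$ and show the chi-square vanishes. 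These priors are the same mixtures over supports (vertices of a rich collection of hyperrectangles) that drive the lower bounds of Theorems~\ref{two-inf-sparse} and \ref{two-inf-dense}, now used to control total variation directly rather than through the Constrained Risk Inequality.

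For the detectable direction, every parameter on $\Omega_1$ has $\|\mu\star\theta\|_0=q_n$ with nonzero entries $\pm r_n,\pm s_n$, so $Q(\mu,\theta)=q_nr_n^2s_n^2/n=n^{\epsilon+2a+2b-1}=2\lambda_n$ identically, whereas $Q\equiv0$ on $\Omega_0$; it therefore suffices to show, uniformly, that $|E_{(\mu,\theta)}\Qhat|=o(\lambda_n)$ on $\Omega_0$, that $E_{(\mu,\theta)}\Qhat\ge(2-o(1))\lambda_n$ on $\Omega_1$, and that $\Var_{(\mu,\theta)}(\Qhat)=o(\lambda_n^2)$ throughout, after which Chebyshev gives $\sup_{\Omega_0}P(\Qhat\ge\lambda_n)+\sup_{\Omega_1}P(\Qhat<\lambda_n)\to0$. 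The mean and variance estimates repeat those behind Theorems~\ref{two-inf-sparse-up} and \ref{two-inf-dense-up}, carried out with exponents $a$ and $b$ that may differ and may be nonpositive. The detectable region emerges from two effects: in the sparse regime $\Qhat_2$ thresholds $X_i^2$ and $Y_i^2$ separately at level $\sigma^2\log n$, so a simultaneous signal $(\pm r_n,\pm s_n)$ contributes its full value $r_n^2s_n^2$ to $E\Qhat_2$ only when $r_n,s_n\gg\sqrt{\log n}$, which forces $a\wedge b>0$; in the dense regime $\Qhat_4$ thresholds only $X_i^2\vee Y_i^2$, so one strong sequence ($a\vee b>0$) already activates the simultaneous coordinates and yields $E\Qhat_4=(2-o(1))\lambda_n$, while the remaining constraint is variance-driven --- decomposing $\Var(\Qhat_4)$ coordinatewise, the ``half-signal'' coordinates (exactly one of $\mu_i,\theta_i$ nonzero) contribute of order $k_n(r_n^4\vee s_n^4)n^{-2}=n^{\beta+4(a\vee b)-2}$, the simultaneous coordinates of order $q_n(r_n^4\vee1)(s_n^4\vee1)n^{-2}$, and the pure-noise coordinates of order $n^{-3}$ up to logarithmic factors, all of which are $o(\lambda_n^2)=o(n^{2\epsilon+4a+4b-2})$ precisely when $a\wedge b>(\beta-2\epsilon)/4$ (with $\epsilon>0$ and $a\vee b>0$). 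A bookkeeping point: when $a$ or $b$ is $\le0$ the corresponding one-sided threshold is active only on an event of probability $n^{-2+o(1)}$, so that coordinate's contribution is even smaller.

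For the undetectable direction, three constructions are needed. In the sparse regime with $a\wedge b\le0$, using the $\mu\leftrightarrow\theta$ symmetry assume $a\le0$: fix $\theta_i=s_n\1(i\le k_n)$, let $\pi_1$ place $q_n$ independent signs $\pm r_n$ on a uniformly random $q_n$-subset $S$ of $[k_n]$, and let $\pi_0$ take $\mu\equiv0$; the $Y$-law coincides, and $\chi^2(P^X_{\pi_1}\|P^X_{\pi_0})=E[\cosh(r_n^2/\sigma^2)^{|S\cap S'|}]-1$ for independent uniform subsets $S,S'$ of $[k_n]$, which vanishes since $a\le0$ keeps $\cosh(r_n^2/\sigma^2)$ bounded while $E|S\cap S'|=q_n^2/k_n=n^{2\epsilon-\beta}\to0$ in the sparse regime. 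In the dense regime with $a\vee b\le0$, let $\pi_1$ place $q_n$ pairs $(\pm r_n,\pm s_n)$ on a uniformly random $q_n$-subset of $[n]$ and $\pi_0$ place the $\mu$- and $\theta$-signals on two disjoint uniformly random $q_n$-subsets; comparing each to pure noise gives $\chi^2+1=E[(\cosh(r_n^2/\sigma^2)\cosh(s_n^2/\sigma^2))^{O}]$ with the overlap $O$ of the two subsets satisfying $EO=q_n^2/n=n^{2\epsilon-1}\to0$ and a bounded base, so both chi-squares vanish. In the dense regime with $a\wedge b<(\beta-2\epsilon)/4$ (the case $a\vee b\le0$ being already covered) assume $a<(\beta-2\epsilon)/4$: fix $\theta_i=s_n\1(i\le k_n)$ and a common background $\mu$ of $k_n-q_n$ signs $\pm r_n$ on a block $F$ disjoint from $[k_n]$; then $\pi_1$ adds $q_n$ further signs $\pm r_n$ on a uniformly random $q_n$-subset of $[k_n]$ while $\pi_0$ adds them on a uniformly random $q_n$-subset of a decoy block $R_0$ of size $k_n$ disjoint from $[k_n]\cup F$ (the blocks fit inside $[n]$ since $\beta<\half$). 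The laws agree on $Y$ and on $X$ off $[k_n]\cup R_0$, and the configurations on $[k_n]$ and on $R_0$ are independent, so $\chi^2(P^X_{\pi_1}\|P^X_{\pi_0})+1$ factors into two terms, the chi-squares in opposite directions between a $q_n$-sparse $\pm r_n$ mixture on $k_n$ coordinates and $N(0,\sigma^2I_{k_n})$; each has leading order $q_n^2r_n^4/k_n=n^{2\epsilon+4a-\beta}$, vanishing exactly when $a<(\beta-2\epsilon)/4$ --- matching the variance threshold from the detectable side, so the two bounds meet at the claimed boundary.

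The routine part is the detectable direction, which is the mean/variance accounting of Theorems~\ref{two-inf-sparse-up} and \ref{two-inf-dense-up} with one extra exponent to carry. The real obstacle is the decoy-block chi-square in its reverse direction, namely $\chi^2(N(0,\sigma^2I_{k_n})\,\|\,\text{$q_n$-sparse }\pm r_n\text{ mixture})$: pointwise bounds on the averaged likelihood ratio are far too lossy here --- using only $g(x)\ge e^{-r_n^2/2\sigma^2}$ would give the much weaker threshold $a<-\epsilon/2$ --- so one must instead run an Ingster-type second-moment expansion, writing this chi-square as $\Var_{N(0,\sigma^2I)}(\bar g)+o(1)$ with $\bar g$ the averaged likelihood ratio and observing that the random signs cancel the term of $\bar g-1$ that is linear in the data, leaving the leading term $\tfrac{q_nr_n^2}{2\sigma^4k_n}\sum_i(X_i^2-\sigma^2)$, whose variance is $\asymp q_n^2r_n^4/k_n$. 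Establishing this expansion, together with the analogous (easier) control of the forward chi-square and of the higher-order remainders, is where the work concentrates; everything else is careful but standard.
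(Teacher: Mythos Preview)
Your detectable-direction argument and two of your three undetectable constructions (sparse regime; dense regime with $a\vee b\le 0$) are essentially the paper's, modulo cosmetic differences such as mixing with signs versus without, or using a nontrivial $\pi_0$ where pure noise would already do. These variants all work and lead to the same chi-square bounds.

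The genuine divergence is in the dense undetectable case $a\wedge b<\tfrac{\beta-2\epsilon}{4}$. Here you introduce a decoy block $R_0$ so that under $\pi_0$ the weak sequence still has $k_n$ nonzero entries, just placed outside the support of the strong sequence. This forces you to control a \emph{reverse} chi-square $\chi^2\bigl(N(0,\sigma^2I_{k_n})\,\big\|\,\text{$q_n$-sparse $\pm r_n$ mixture}\bigr)$, which, as you note, does not come for free: the Aldous--Jensen convexity trick that handles the forward direction no longer applies, and one must instead justify the Taylor expansion of $1/\bar g$ and tame the tail where $\bar g$ is far from $1$. Your sketch for this is plausible (the local symmetry of $\chi^2$ does give reverse $\approx$ forward at leading order), but it is real work.

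The paper sidesteps this entirely. Since $\Omega_0$ only requires $\|\mu\|_0\le k_n$ and $\|\theta\|_0\le k_n$, one may simply take the weak sequence identically zero under $\pi_0$: fix the strong sequence on $[k_n]$, let $\pi_0$ set the weak sequence to $0$, and let $\pi_1$ place $q_n$ signed entries of the weak sequence on a uniformly random $q_n$-subset of $[k_n]$. Then only the \emph{forward} chi-square $\chi^2(\text{mixture}\,\|\,\text{noise})$ appears, and the standard Aldous domination gives
\[
\chi^2(P_{\pi_1}\|P_{\pi_0})+1\ \le\ \Bigl(1+\tfrac{q_n}{k_n}\bigl[\cosh(r_n^2/\sigma^2)-1\bigr]\Bigr)^{q_n}\ \longrightarrow\ 1
\]
directly under $a<\tfrac{\beta-2\epsilon}{4}$. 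Your background block $F$ and decoy block $R_0$ are simply unnecessary: nothing in the lower-bound framework requires $\pi_0$ and $\pi_1$ to share the same $\|\mu\|_0$. So the ``real obstacle'' you identify is self-imposed; dropping the decoy removes it and shortens the proof considerably.
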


\begin{figure}[!h]
\centering
\includegraphics[angle=0,width=6.5in]{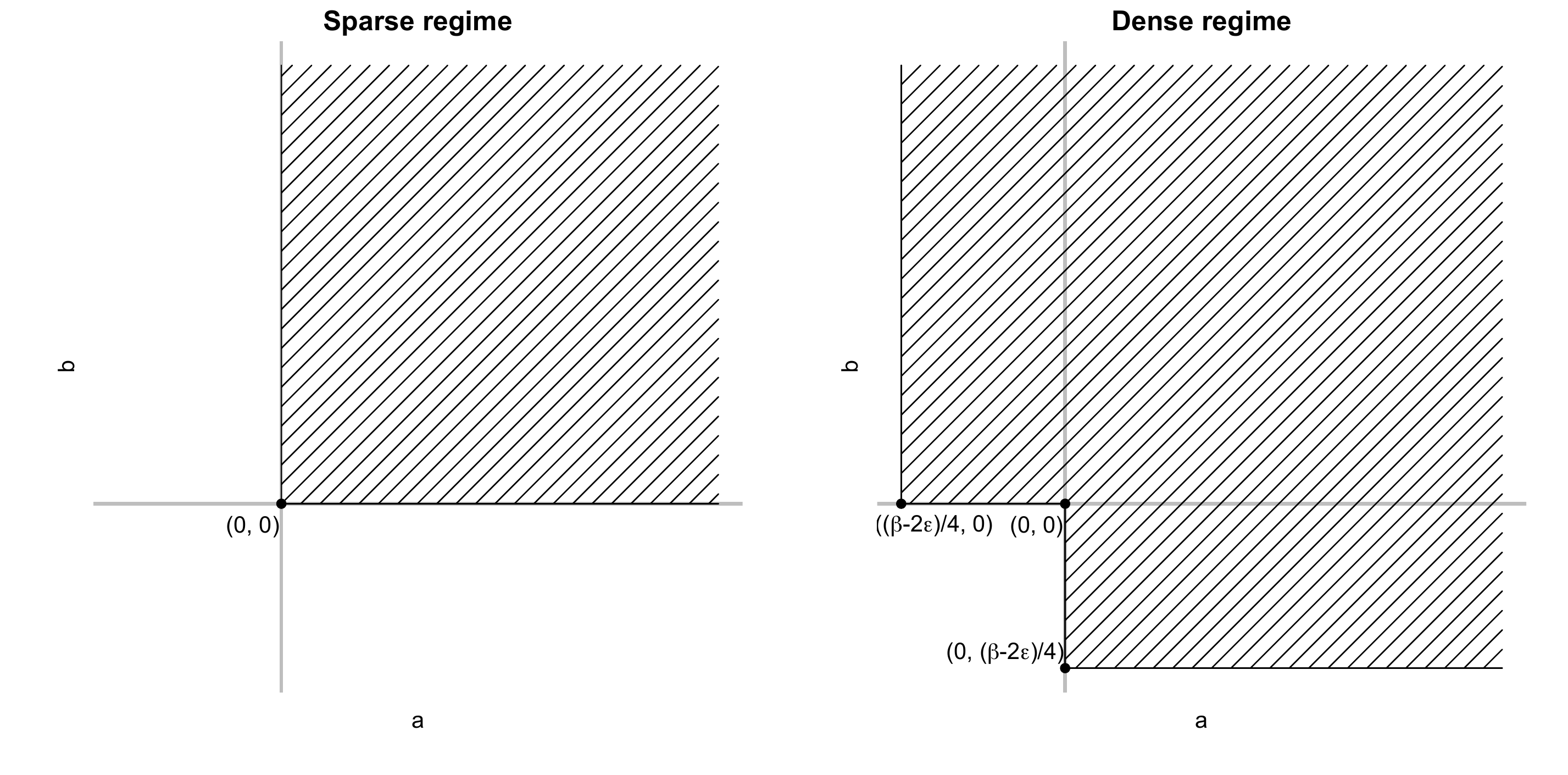}
\caption{Plot of detectable versus undetectable regions for the sparse regime (left) and the dense regime (right).  The shaded area corresponds to the detectable region, while the unshaded area corresponds to the undetectable region.}
\label{detectable}
\end{figure}

In Figure~\ref{detectable}, we plot the detectable and the undetectable regions for both sparse and dense regimes.  The detectable region in the dense regime is larger than that in the sparse regime.  Interestingly, in the dense regime, signal is detectable provided that the signal strength of at least one of the sequences is large enough (and the signal strength of the other sequence is not too weak).  In contrast, in the sparse regime, signal is only detectable when both sequences admit sufficiently strong signals.

Note that the undetectable region for the detection problem \eqref{twoTestingA} is essentially the same as the region where $\Qhat_0 = 0$ attains the optimal rate of estimation for $Q(\mu, \theta)$ over the parameter space $\Omega(\beta, \epsilon, a, b)$ defined in \eqref{paramTwoab} of Appendix~\ref{generalEst}.  To see the connection, compare Figure \ref{detectable} with Remark~\ref{commentab} given at the end of Section~\ref{phaseplot}, and the results in Appendix~\ref{generalEst}.  

One may notice that the resolution of our microscope is not high enough in some sense: the condition $a\wedge b = 0$ when $0<\epsilon<\frac{\beta}{2}$ and the condition $a\vee b = 0$ when $\frac{\beta}{2}\leq\epsilon\leq\beta$ does not involve $\epsilon$ and $\beta$.  A more refined analysis may involve the parameterization $r_n\wedge s_n = \sigma\sqrt{c\log n}$ or $r_n\vee s_n = \sigma\sqrt{d\log n}$ for some $c, d>0$.  One can show that in such cases, the test procedures based on $\Qhat_2$ and $\Qhat_4$ are still effective for $c, d$ sufficiently large.  A detailed analysis of the exact detection boundary along this order is beyond the scope of the paper.

\iffalse
Admittedly, formulation \eqref{twoTestingA} of the simultaneous signal detection problem, though serves to connect the estimation and detection problem in the two-sequence case, is somewhat restrictive.  In particular, the signal strength can only takes on values in $\{0, \pm r_n\}$ or $\{0, \pm s_n\}$, and the rejection rule requires the knowledge of $r_n$ and $s_n$ which is usually unavailable in practice.  This has largely to do with the fact that the test procedures make use of estimators of quadratic functional, which, despite being optimal for estimation, have large variance under the composite null hypothesis.  This is very different from the one-sequence detection problem where variance of quadratic estimators are small under the fully specified null hypothesis, and tests based on such estimators are practical.  In other words, in the two-sequence setting, estimators of quadratic functional are not the best candidates to consider in constructing test procedures that are more universal.  We consider in Appendix~\ref{general} an alternative formulation of the simultaneous signal detection problem that is more flexible but not as intimately tied to quadratic functional.
\fi

%%%%%%%%%%%%%%%%%%%%%%%%%%%%%%%%%%%%%%%%%%%%%%%%%%%%%
% Simulation 
\section{Simulation}\label{simulation}
In this section, we perform some simulation studies to compare the performance of the three estimators $\Qhat_0 = 0, \Qhat_2$ as in \eqref{estimator2}, and $\Qhat_4$ as in \eqref{estimator4}, under different scenarios.  We compute the mean squared error (MSE) of the three estimators and show that our simulation results is compatible with the theoretical results given in Section~\ref{twosample}.

So far, we have assumed that the noise level $\sigma$ is known. In practice, $\sigma$ is typically unknown and needs to be estimated. Under the sparse setting of the present paper, $\sigma$ is easily estimable. Denote by $M\in \mathbb{R}^{2n}$ with $M_{2i-1}=X_i$ and $M_{2i}=Y_i$ for $i=1, ..., n$. A simple robust estimator of the noise level $\sigma$ is the following median absolute deviation (MAD) estimator:
\[
\hat \sigma = \frac{{\rm median} |M_j - {\rm median}(M_j) |}{0.6745}.
\]

We consider simulation studies over a range of sample size $n$, sparsity $k_n = n^\beta$, simultaneous sparsity $q_n = n^\epsilon$, and signal strength $s_n = n^b$.  More specifically, we take $n \in \{10^3, 10^4, \ldots, 10^7\}$, $\beta = 0.45$ for individual sequences, $b \in \{-0.1, 0.15, 0.2\}$, and three values of simultaneous sparsity, one for each regime: $\epsilon = 0.02$ (sparse regime), $\epsilon = 0.3$ (moderately dense regime) and $\epsilon = 0.44$ (strongly dense regime).  Figure~\ref{Sim} is the plot of the MSE (averaged over 500 replications) of the three estimators over different sample sizes (in the log-log scale), for each combination of simultaneous sparsity and signal strength.  

\begin{figure}[!h]
\centering
\includegraphics[angle=0,width=6.5in]{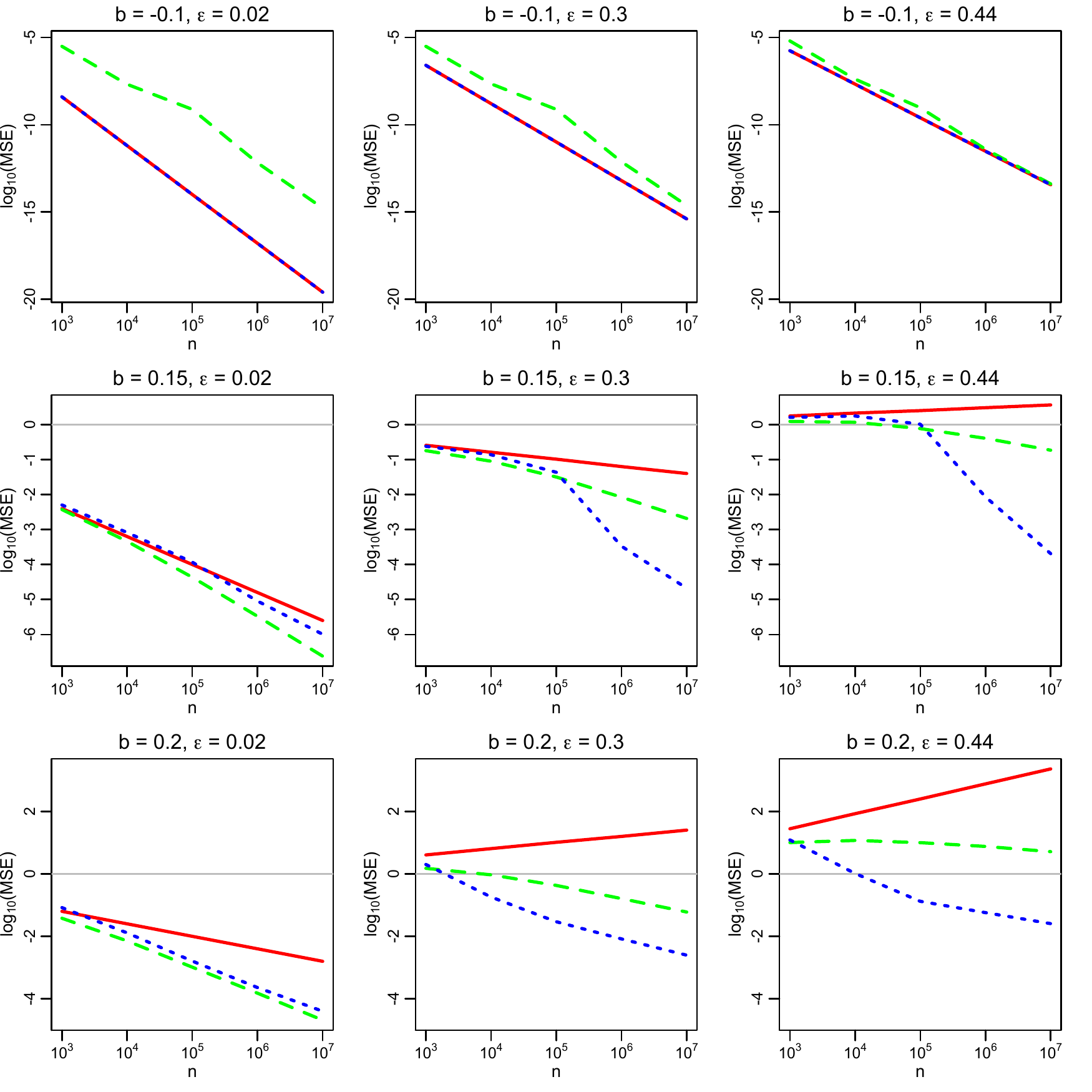}
\caption{Plot of MSE for the estimators $\Qhat_0$ (\protect\includegraphics[height=0.4em]{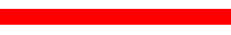}), $\Qhat_2$ (\protect\includegraphics[height=0.4em]{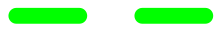}), and $\Qhat_4$ (\protect\includegraphics[height=0.4em]{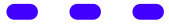}) over different sample sizes $n \in \{10^3, \ldots, 10^7\}$, in the log-log scale.  The columns are ordered from left to right as $\epsilon = 0.02$ (sparse regime), $\epsilon = 0.3$ (moderately dense regime), and $\epsilon = 0.44$ (strongly dense regime).  The rows are ordered from top to bottom in increasing signal strength: $b \in \{-0.1, 0.15, 0.2\}$.  Horizontal grey line corresponds to $\text{MSE} = 1$, and it serves to distinguish between $\text{MSE}\longrightarrow 0$ (negative slope) and $\text{MSE}\longrightarrow\infty$ (positive slope).}
\label{Sim}
\end{figure}

The theoretical results in Section~\ref{twosample} indicate that for $\Qhat = \Qhat_0, \Qhat_2$, or $\Qhat_4$,
\[\sup_{(\mu, \theta)\in\Omega(\beta, \epsilon, b)} E(\Qhat-Q(\mu, \theta))^2 \asymp n^{r(\beta, \epsilon, b)}\]
for some rate exponent $r(\beta, \epsilon, b)$ (modulo a logarithmic factor when applicable).  Thus, it is not surprising that the results in Figure~\ref{Sim} (mostly) exhibit a linear pattern.  When the signal is weak with $b =-0.1$ (see the first row of Figure~\ref{Sim}), we see that $\Qhat_0$ (solid red line) and $\Qhat_4$ (dotted blue line) have the lowest mean square error.  Note that we expect $\Qhat_0$ to be optimal when the signal is weak.  We observe that $\Qhat_4$ is nearly as good as $\Qhat_0$ from Figure~\ref{Sim}.  This is because when the signal is weak, the thresholding step $\1(X_i^2\vee Y_i^2\geq\sigma^2\tau_n)$ thresholds both noise and weak signals, and the de-bias term $\eta$ is extremely small when $n$ is moderately large, resulting in $\Qhat_4 \approx \Qhat_0 = 0$.  As signal becomes sufficiently strong ($b \in \{0.15, 0.2\}$), $\Qhat_2$ starts to dominate in the sparse regime ($\epsilon = 0.02$) while $\Qhat_4$ dominates in the moderately dense and strongly dense regimes ($\epsilon \in \{0.3, 0.44\}$).  When the signal is sufficiently large ($b \in \{0.15, 0.2\}$), $\Qhat_0$ is clearly suboptimal.  In particular, in the case where signal is both dense and strong ($b=0.2, \epsilon\in\{0.3, 0.44\}$), the MSE of $\Qhat_0$ diverges to infinity, as indicated by the positive slope of the solid red line.  Note also that as either $\epsilon$  or $b$ increases, MSE increases, as can be seen by the flattening or reversing of slopes towards the right end or bottom of the plot panel.  This is compatible with the fact that $r(\beta, \epsilon, b)$ increases with respect to both $\epsilon$ and $b$.

%%%%%%%%%%%%%%%%%%%%%%%%%%%%%%%%%%%%%%%%%%%%%%%%%%%%%
% Discussion
\section{Discussion}\label{discussion}
In this paper, we discuss the estimation of the quadratic functional $Q(\mu, \theta) = \frac{1}{n}\sum\mu_i^2\theta_i^2$ over a family of parameter spaces where the magnitude and sparsity of both $\mu$ and $\theta$ are constrained.  We show that the minimax rates of convergence display interesting phase transitions over three distinct regimes: the sparse regime, the moderately dense regime, and the strongly dense regime.  We also demonstrate an application of the estimators of the quadratic functional to a closely related simultaneous signal detection problem, and show that the resulting test procedures are effective in detecting simultaneous signals over the detectable region.  Throughout our analysis, we highlight distinctions and similarities between the one-sequence and two-sequence problems, for both estimation and testing.  

It will be interesting to generalize our study of the two-sequence estimation problems in numerous aspects.  In Appendix~\ref{general}, we show that the optimal rates of estimation for $Q(\mu, \theta)$ continue to subsume the aforementioned three regimes, when $\mu$ and $\theta$ are allowed unequal signal strengths.  Nonetheless, the distinction between the sparse and dense regime is more apparent in this setting.  In the sparse regime, estimation is only desirable when the signal strength of both sequences are sufficiently strong.  In contrast, in the dense regime, estimation is desirable whenever at least one sequence admits sufficiently strong signal (and the signal strength of the other sequence is not too weak).  We also examine in Appendix~\ref{general} the behavior of the estimation problem when signal strength is incorporated through $\ell_2$-norm rather than $\ell_\infty$-norm.  Unlike the one-sequence estimation problem, in the two-sequence case the minimax rates of convergence are to some extent degenerate under the $\ell_2$-norm constraint.  Thus, it is reasonable to suspect that the one-sequence and two-sequence estimation problems are not that resembling after all.  A more refined analysis of the characteristics of both one-sequence and two-sequence problems demands an examination of their respective behaviors under the $\ell_p$-norm constraint on the signal strength, for $p\in (0, \infty]$, and is beyond the scope of the paper.

\iffalse
As was noted at the end of Section~\ref{detection},  the setting where estimators of quadratic functional lead to optimal test procedures is somewhat restrictive.   For the interested readers, we present in Appendix~\ref{general} an alternative formulation of the simultaneous signal detection problem that permits more flexibility but is unsurprisingly not as connected to quadratic functionals.  The corresponding test procedure, though optimal in minimax rate, is likely not optimal in minimax constant.  Therefore, an important question that remains open is the existence of test procedure that not only is practically useful but also is optimal for the simultaneous signal detection problem.
\fi

%%%%%%%%%%%%%%%%%%%%%%%%%%%%%%%%%%%%%%%%%%%%%%%%%%%%%
% Discussion
\section{Proofs of Theorem~\ref{two-inf-sparse-up} and Theorem~\ref{two-inf-sparse}}\label{proofs}
In this section, we present the proofs of Theorem~\ref{two-inf-sparse-up} and Theorem~\ref{two-inf-sparse}, which concern estimation results of $Q(\mu, \theta)$ in the sparse regime.  For reason of space, we relegate the proofs of other main results in the paper to Appendix~\ref{prooftwosample}.  

Henceforth, we omit the subscripts $n$ in $k_n, q_n, s_n$ and $\tau_n$ that signifies their dependence on the sample size.  We denote by $\psi_\mu$ the density of a Gaussian distribution with mean $\mu$ and variance $\sigma^2$, and we denote by $\ell(n, k)$ the class of all subsets of $\{1, \ldots, n\}$ of $k$ distinct elements.  Finally, $c$ and $C$ denote constants that may vary for each occurrence.

%Two-sequence, sparse: UB
\subsection{Proof of Theorem~\ref{two-inf-sparse-up}}\label{sparse-upper-proof}
The proof of Theorem~\ref{two-inf-sparse-up} involves a careful analysis of the bias and variance of the estimator $\Qhat_2$ (defined in \eqref{estimator2}).  We need the following lemma from \cite{CaiLow2005} (Lemma 1, page 2939) for proving Theorem~\ref{two-inf-sparse-up}.
% Lemma
\begin{Lemma}\label{qbound2}
Let $Y\sim N(\theta, \sigma^2)$ and let $\theta_0 = E_0(Y^2-\sigma^2\tau)_+$, where the expectation is taken under $\theta=0$.  Then for $\tau\geq 1$ and $\widehat{\theta^2} = (Y^2-\sigma^2\tau)_+-\theta_0$,
\begin{align*}
|\theta_0| &\leq \frac{4\sigma^2}{\sqrt{2\pi}\tau^{1/2} e^{\tau/2}}, \\
|E(\widehat{\theta^2}) -\theta^2| &\leq \min\{2\sigma^2\tau, \theta^2\}, \\
\Var(\widehat{\theta^2}) &\leq 6\sigma^2\theta^2 + \sigma^4\frac{4\tau^{1/2}+18}{e^{\tau/2}}.
\end{align*}
\end{Lemma}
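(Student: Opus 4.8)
\medskip
\noindent\textbf{Plan of proof.}\quad Since the lemma is quoted from \cite{CaiLow2005}, I only sketch the argument. All three bounds reduce to one-dimensional Gaussian computations: I would write $Y=\theta+\sigma Z$ with $Z\sim N(0,1)$, so that $Y^2/\sigma^2$ is noncentral $\chi^2_1$ with noncentrality $\theta^2/\sigma^2$, and let $\phi$, $\Phi$ be the standard normal density and distribution function and $\bar\Phi=1-\Phi$. For the bound on $\theta_0$ I would set $\theta=0$, so that $\theta_0=\sigma^2E(Z^2-\tau)_+=2\sigma^2\int_{\sqrt\tau}^\infty(z^2-\tau)\phi(z)\,dz$; integration by parts (using $z\phi(z)=-\phi'(z)$) gives the exact identity $\theta_0=2\sigma^2\bigl[\sqrt\tau\,\phi(\sqrt\tau)-(\tau-1)\bar\Phi(\sqrt\tau)\bigr]$, and the two-sided Mills bounds $\phi(x)(x^{-1}-x^{-3})\le\bar\Phi(x)\le\phi(x)x^{-1}$ at $x=\sqrt\tau$ show at once both that $\theta_0\ge0$ and that $\theta_0\le 4\sigma^2\tau^{-1/2}\phi(\sqrt\tau)=4\sigma^2/(\sqrt{2\pi}\,\tau^{1/2}e^{\tau/2})$, which is the claim (and in particular $\theta_0\le\sigma^2$ for $\tau\ge1$).

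For the bias I would write $E\widehat{\theta^2}=E(Y^2-\sigma^2\tau)_+-\theta_0=\sigma^2\bigl(F(|\theta|/\sigma)-F(0)\bigr)$, where $F(a):=E[((a+Z)^2-\tau)_+]$. Differentiating under the expectation gives $F'(a)=2E[W\,\1(|W|>\sqrt\tau)]$ with $W\sim N(a,1)$, and the elementary inequality $\phi(w-a)\ge\phi(w+a)$ (valid for $w,a\ge0$) yields both $F'(a)\ge0$, hence $E\widehat{\theta^2}\ge0$, and---after writing $E[W\,\1(|W|>\sqrt\tau)]=a-E[W\,\1(|W|\le\sqrt\tau)]$ with the last term nonnegative---the inequality $F'(a)\le 2a$; integrating, $F(a)-F(0)\le a^2$, i.e.\ $E\widehat{\theta^2}\le\theta^2$. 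Together these give $|E\widehat{\theta^2}-\theta^2|\le\theta^2$. For the companion half of the minimum, the pointwise identity $v_+=v+(-v)_+$ with $v=Y^2-\sigma^2\tau$ gives $E\widehat{\theta^2}-\theta^2=\sigma^2(1-\tau)+E(\sigma^2\tau-Y^2)_+-\theta_0$, which lies in $[-\sigma^2\tau,\ \sigma^2]$ since $0\le E(\sigma^2\tau-Y^2)_+\le\sigma^2\tau$ and $0\le\theta_0\le\sigma^2$; hence $|E\widehat{\theta^2}-\theta^2|\le\sigma^2\tau\le 2\sigma^2\tau$ for $\tau\ge1$.

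For the variance, the starting observation is that $v\mapsto(v-\sigma^2\tau)_+$ is $1$-Lipschitz, so by the identity $\Var(g(V))=\half E[(g(V)-g(V'))^2]$ for an independent copy $V'$ of $V=Y^2$ one gets $\Var(\widehat{\theta^2})=\Var((Y^2-\sigma^2\tau)_+)\le\Var(Y^2)=2\sigma^4+4\sigma^2\theta^2$, which is at most $6\sigma^2\theta^2$ whenever $\theta^2\ge\sigma^2$. In the complementary range $\theta^2<\sigma^2$ this bound is too crude, so I would instead bound $\Var(\widehat{\theta^2})\le E[((Y^2-\sigma^2\tau)_+)^2]=\sigma^4\int_{|w|>\sqrt\tau}(w^2-\tau)^2\phi(w-\theta/\sigma)\,dw$ directly, split the integral at $\pm\sqrt\tau$, use $|\theta|/\sigma<1\le\sqrt\tau$ to control the shift in the Gaussian factor, and evaluate the tail integrals $\int_{\sqrt\tau}^\infty w^{j}\phi(w)\,dw$ for $j=0,2,4$ in closed form in terms of $\phi(\sqrt\tau)$ and $\bar\Phi(\sqrt\tau)$; the factor $e^{-\tau/2}$ then comes from $\phi(\sqrt\tau)$ and the remaining contributions, which are linear in $\theta^2$, are absorbed into $6\sigma^2\theta^2$.

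The one place I expect to need genuine care is the variance bound for $\theta^2<\sigma^2$: the relevant tail integrals involve substantial cancellation, so the first-order Mills bound $\bar\Phi(x)\le\phi(x)/x$ alone (like the Gaussian Poincar\'e inequality, which only yields $\Var(\widehat{\theta^2})\lesssim\sigma^2\theta^2+\sigma^4$, not exponentially small in $\tau$) will not suffice---one must keep several terms of the Mills expansion to land the sharp constant $\sigma^4(4\tau^{1/2}+18)e^{-\tau/2}$. The bounds on $\theta_0$ and on the bias, by contrast, are soft consequences of monotonicity and one-term Gaussian tail estimates.
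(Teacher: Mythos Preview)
The paper does not prove this lemma at all: it simply quotes it as Lemma~1 of \cite{CaiLow2005}, so there is no in-paper argument to compare against. Your sketch is correct and is essentially the standard route (indeed the one in \cite{CaiLow2005}): the exact identity $\theta_0=2\sigma^2[\sqrt\tau\,\phi(\sqrt\tau)-(\tau-1)\bar\Phi(\sqrt\tau)]$ together with the two-term Mills bound handles the first claim; the monotonicity argument $0\le F'(a)\le 2a$ combined with the decomposition $v_+=v+(-v)_+$ gives both halves of the bias bound; and for the variance the Lipschitz contraction $\Var((Y^2-\sigma^2\tau)_+)\le\Var(Y^2)=4\sigma^2\theta^2+2\sigma^4$ disposes of $\theta^2\ge\sigma^2$, while for $\theta^2<\sigma^2$ one bounds the second moment $E[((Y^2-\sigma^2\tau)_+)^2]$ by direct tail integration---exactly as you say, and as Cai and Low do. Your diagnosis of where the work lies (the small-$\theta$ variance case, where one-term Mills is insufficient) is accurate.
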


Lemma~\ref{varboundlemma} is an immediate consequence of Lemma~\ref{qbound2}.
% Lemma
\begin{Lemma}\label{varboundlemma}
Let $Y\sim N(\theta, \sigma^2)$ and let $\theta_0 = E_0(Y^2-\sigma^2\tau)_+$, where the expectation is taken under $\theta=0$.  Then for $\tau\geq 1$,
\[(E(Y^2-\sigma^2\tau)_+ - \theta_0)^2 \leq \max\bigg\{6\sigma^2\theta^2 + \sigma^4\frac{4\tau^{1/2}+18}{e^{\tau/2}}, 10\theta^4\bigg\}.\]
\end{Lemma}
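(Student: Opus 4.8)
The plan is to unwind the definitions so that the left-hand side of Lemma~\ref{varboundlemma} is literally the squared bias of the thresholding estimator $\widehat{\theta^2}$ that appears in Lemma~\ref{qbound2}, and then simply quote the bias bound from that lemma. Since $\theta_0$ is a deterministic constant, $E(Y^2-\sigma^2\tau)_+ - \theta_0 = E\big[(Y^2-\sigma^2\tau)_+ - \theta_0\big] = E(\widehat{\theta^2})$ with $\widehat{\theta^2} = (Y^2-\sigma^2\tau)_+ - \theta_0$, so the quantity to be bounded is exactly $\big(E(\widehat{\theta^2})\big)^2$. This reduction is the only step that requires any care; everything after it is immediate.

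\textbf{Main argument.} By the second display of Lemma~\ref{qbound2}, $\big|E(\widehat{\theta^2}) - \theta^2\big| \le \min\{2\sigma^2\tau, \theta^2\} \le \theta^2$. The triangle inequality then gives $\big|E(\widehat{\theta^2})\big| \le |\theta^2| + \big|E(\widehat{\theta^2}) - \theta^2\big| \le 2\theta^2$, hence $\big(E(\widehat{\theta^2})\big)^2 \le 4\theta^4 \le 10\theta^4$, which is at most $\max\big\{6\sigma^2\theta^2 + \sigma^4\tfrac{4\tau^{1/2}+18}{e^{\tau/2}},\, 10\theta^4\big\}$ since that maximum is bounded below by its second entry. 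This proves the stated inequality for all $\tau\ge 1$.

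\textbf{Remark on the form of the bound.} I expect no genuine obstacle here --- the estimate is an immediate corollary of the bias bound in Lemma~\ref{qbound2}, and the variance-type first term inside the maximum plays no role in establishing it. It is included only so that the statement dovetails with the variance bound of Lemma~\ref{qbound2} in the subsequent proof of Theorem~\ref{two-inf-sparse-up}: there one needs a bound on the second moment $E[(\widehat{\theta^2})^2] = \Var(\widehat{\theta^2}) + \big(E(\widehat{\theta^2})\big)^2$, and writing the squared-bias bound in exactly the $\max$-form of Lemma~\ref{varboundlemma} lets one conclude that \emph{both} summands are controlled by $\max\big\{6\sigma^2\theta^2 + \sigma^4\tfrac{4\tau^{1/2}+18}{e^{\tau/2}},\, 10\theta^4\big\}$, so $E[(\widehat{\theta^2})^2]$ is at most twice this quantity. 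If one wished to avoid the harmless slack, one could alternatively split on whether $\theta^2 \le 2\sigma^2\tau$ or $\theta^2 > 2\sigma^2\tau$ and keep the sharper branch of the minimum in each case, but this refinement is unnecessary for the bound as stated.
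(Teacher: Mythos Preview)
Your proof is correct and considerably shorter than the paper's. You observe that the left-hand side is $\big(E\widehat{\theta^2}\big)^2$ and then apply the bias bound $|E\widehat{\theta^2}-\theta^2|\le\theta^2$ from Lemma~\ref{qbound2} directly, obtaining $\big(E\widehat{\theta^2}\big)^2\le 4\theta^4\le 10\theta^4$, which already dominates the max. The paper instead first shows $B(\theta)=E(Y^2-\sigma^2\tau)_+-\theta_0\ge 0$ by differentiating, uses this together with $\theta_0\ge 0$ to drop $\theta_0$ and pass to the second moment via $\big(E(Y^2-\sigma^2\tau)_+\big)^2\le E[(Y^2-\sigma^2\tau)_+^2]$, and then bounds that second moment by a case split on $|\theta|\le\sigma$ versus $|\theta|>\sigma$, citing the proof of Lemma~1 in \cite{CaiLow2005} for the small-$\theta$ case. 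Your route avoids the monotonicity computation and the case split entirely and in fact yields the sharper constant $4\theta^4$; the price is that you never actually \emph{establish} the first branch $6\sigma^2\theta^2+\sigma^4(4\tau^{1/2}+18)e^{-\tau/2}$ in any region, whereas the paper's argument genuinely shows each branch holds on its respective range of $\theta$. For the lemma as stated this is immaterial, and your reading of why the $\max$ is retained --- so that in the variance computation of Theorem~\ref{two-inf-sparse-up} the squared-mean factor can be paired term-by-term with the variance bound of Lemma~\ref{qbound2} --- is exactly right.
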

To streamline the presentation of proof of Theorem~\ref{two-inf-sparse-up}, we defer the proof of Lemma~\ref{varboundlemma} to the end of Appendix~\ref{two-upper}.

\begin{proof}[Proof of Theorem~\ref{two-inf-sparse-up}]
We first bound the bias of the estimator $\Qhat_2$ defined in \eqref{estimator2}.  Using the equality
\[AB-ab = (A-a)(B-b) + a(B-b) + b(A-a),\]
the independence of $X_i$ and $Y_i$, and the triangle inequality, we get
\begin{align*}
& \Big|E_{(\mu_i, \theta_i)}\{[(X_i^2-\sigma^2\tau)_+ - \mu_0][(Y_i^2-\sigma^2\tau)_+ - \theta_0]\} - \mu_i^2\theta_i^2\Big| \\
&\leq \Big|E_{\mu_i}[(X_i^2-\sigma^2\tau)_+ - \mu_0] - \mu_i^2\Big|\cdot \Big|E_{\theta_i}[(Y_i^2-\sigma^2\tau)_+ - \theta_0] - \theta_i^2\Big| \\
&\qquad + \mu_i^2\Big|E_{\theta_i}[(Y_i^2-\sigma^2\tau)_+ - \theta_0] - \theta_i^2\Big| + \theta_i^2\Big|E_{\mu_i}[(X_i^2-\sigma^2\tau)_+ - \mu_0]-\mu_i^2\Big| \\ 
&\leq \min\{2\sigma^2\tau, \mu_i^2\}\min\{2\sigma^2\tau, \theta_i^2\} + \mu_i^2\min\{2\sigma^2\tau, \theta_i^2\} + \theta_i^2\min\{2\sigma^2\tau, \mu_i^2\} \\
&\leq 2\mu_i^2\min\{2\sigma^2\tau, \theta_i^2\} + 2\theta_i^2\min\{2\sigma^2\tau, \mu_i^2\},
\end{align*}
the second inequality follows from Lemma~\ref{qbound2}.  It follows that for $(\mu, \theta)\in\Omega(\beta, \epsilon, b)$ and $\tau\geq 1$,  
\begin{align*}
&|E_{(\mu, \theta)}(\Qhat_2) - Q(\mu, \theta)| \\
&= \bigg|\frac{1}{n}\sum_{i=1}^nE_{(\mu_i, \theta_i)}\{[(X_i^2-\sigma^2\tau)_+ - \mu_0][(Y_i^2-\sigma^2\tau)_+ - \theta_0]\} - \frac{1}{n}\sum_{i=1}^n\mu_i^2\theta_i^2\bigg| \\
&\leq \frac{2}{n}\sum_{i=1}^n \big[\mu_i^2\min\{2\sigma^2\tau, \theta_i^2\} + \theta_i^2\min\{2\sigma^2\tau, \mu_i^2\}\big] \\
&\leq \frac{4}{n}\min\{2\sigma^2qs^2\tau, qs^4\},
\end{align*}
the second inequality follows from the fact that for $(\mu, \theta)\in\Omega(\beta, \epsilon, b)$, there are at most $q$ entries that are simultaneously nonzero for $\mu$ and $\theta$.  

We now proceed to bounding the variance of $\Qhat_2$.  Applying the equality
\[\Var(AB) = \Var(A)\Var(B) + [E(A)]^2\Var(B) + [E(B)]^2\Var(A),\]
for $\tau\geq 1$, we have
\begin{align*}
&\Var_{(\mu_i, \theta_i)}\{[(X_i^2-\sigma^2\tau)_+ - \mu_0][(Y_i^2-\sigma^2\tau)_+ - \theta_0]\} \\
&= \Var_{\mu_i}[(X_i^2-\sigma^2\tau)_+ - \mu_0]\Var_{\theta_i}[(Y_i^2-\sigma^2\tau)_+ - \theta_0] \\
&\qquad + [E_{\mu_i}(X_i^2-\sigma^2\tau)_+ - \mu_0]^2\Var_{\theta_i}[(Y_i^2-\sigma^2\tau)_+ - \theta_0] \\
&\qquad + [E_{\theta_i}(Y_i^2-\sigma^2\tau)_+ - \theta_0]^2\Var_{\mu_i}[(X_i^2-\sigma^2\tau)_+ - \mu_0] \\
&\leq 3 \bigg[6\sigma^2\mu_i^2 + \sigma^4\frac{4\tau^{1/2}+18}{e^{\tau/2}}\bigg]\bigg[6\sigma^2\theta_i^2 + \sigma^4\frac{4\tau^{1/2}+18}{e^{\tau/2}}\bigg] \\
&\qquad + 10\mu_i^4\bigg[6\sigma^2\theta_i^2 + \sigma^4\frac{4\tau^{1/2}+18}{e^{\tau/2}}\bigg] + 10\theta_i^4\bigg[6\sigma^2\mu_i^2 + \sigma^4\frac{4\tau^{1/2}+18}{e^{\tau/2}}\bigg], 
\end{align*}
the inequality follows from Lemma~\ref{qbound2} and Lemma~\ref{varboundlemma}.  Thus, for $(\mu, \theta)\in\Omega(\beta, \epsilon, b)$ and $\tau\geq 1$,
\begin{align*}
&\Var_{\mu, \theta}(\Qhat_2) \\
&= \frac{1}{n^2}\sum_{i=1}^n \Var_{(\mu_i, \theta_i)}\{[(X_i^2-\sigma^2\tau)_+ - \mu_0][(Y_i^2-\sigma^2\tau)_+ - \theta_0]\} \\
&\leq \frac{3}{n^2}\sum_{i=1}^n \bigg[6\sigma^2\mu_i^2 + \sigma^4\frac{4\tau^{1/2}+18}{e^{\tau/2}}\bigg]\bigg[6\sigma^2\theta_i^2 + \sigma^4\frac{4\tau^{1/2}+18}{e^{\tau/2}}\bigg] \\
&\qquad + \frac{10}{n^2}\sum_{i=1}^n\mu_i^4\bigg[6\sigma^2\theta_i^2 + \sigma^4\frac{4\tau^{1/2}+18}{e^{\tau/2}}\bigg] + \frac{10}{n^2}\sum_{i=1}^n\theta_i^4\bigg[6\sigma^2\mu_i^2 + \sigma^4\frac{4\tau^{1/2}+18}{e^{\tau/2}}\bigg] \\
&\leq \frac{3}{n^2}\bigg[36\sigma^4qs^4 + 12\sigma^6ks^2\bigg(\frac{4\tau^{1/2}+18}{e^{\tau/2}}\bigg) + n\sigma^8\bigg(\frac{4\tau^{1/2}+18}{e^{\tau/2}}\bigg)^2\bigg] \\
&\qquad+ \frac{20}{n^2}\bigg[6\sigma^2qs^6 + \sigma^4ks^4\bigg(\frac{4\tau^{1/2}+18}{e^{\tau/2}}\bigg)\bigg].
\end{align*}
Combining the bias and variance term, we get, for $\tau\geq 1$,
\begin{align*}
&\sup_{(\mu, \theta)\in \Omega(\beta, \epsilon, b)}E_{(\mu, \theta)}(\Qhat_2-Q(\mu, \theta))^2 \\
&\leq \frac{C}{n^2}\bigg[\min\{q^2s^4\tau^2, q^2s^8\} +\max\bigg\{qs^4, qs^6, ks^2\bigg(\frac{4\tau^{1/2}+18}{e^{\tau/2}}\bigg), \\
&\qquad\qquad\qquad\qquad\qquad\qquad\qquad\qquad\qquad ks^4\bigg(\frac{4\tau^{1/2}+18}{e^{\tau/2}}\bigg), n\bigg(\frac{4\tau^{1/2}+18}{e^{\tau/2}}\bigg)^2\bigg\}\bigg] \\
&= \frac{C}{n^2}\bigg[\min\{n^{2\epsilon+4b}\tau^2, n^{2\epsilon+8b}\} + \max\bigg\{n^{\epsilon+4b}, n^{\epsilon+6b}, n^{\beta+2b}\bigg(\frac{4\tau^{1/2}+18}{e^{\tau/2}}\bigg), \\
&\qquad\qquad\qquad\qquad\qquad\qquad\qquad\qquad\qquad n^{\beta+4b}\bigg(\frac{4\tau^{1/2}+18}{e^{\tau/2}}\bigg), n\bigg(\frac{4\tau^{1/2}+18}{e^{\tau/2}}\bigg)^2\bigg\}\bigg]. 
\end{align*}
Suppose that $b>0$. Then letting $\tau = \log n$ leads to
\[\sup_{(\mu, \theta)\in \Omega(\beta, \epsilon, b)}E_{(\mu, \theta)}(\Qhat_2-Q(\mu, \theta))^2 \leq C\bigg[n^{2\epsilon+4b-2}(\log n)^2 + n^{\epsilon+6b-2}\bigg] \leq C\gamma_n(\beta, \epsilon, b),\]
where
\begin{equation}
\gamma_n(\beta, \epsilon, b) = \left\{
\begin{array}{ll}
n^{2\epsilon+4b-2}(\log n)^2 &\text{if }0<b\leq\frac{\epsilon}{2}, \\
n^{\epsilon+6b-2} &\text{if }b>\frac{\epsilon}{2}.
\end{array}\right. \label{Q3rates}
\end{equation}
From the calculation above, one can also check that when $0<\epsilon\leq\beta$ and $s = \sigma\sqrt{d\log n}$, 
\[\sup_{\substack{(\mu, \theta): \|\mu\|_0\leq k_n, \|\theta\|_0\leq k_n, \\\|\mu\|_\infty\leq \sigma\sqrt{d\log n}, \|\theta\|_\infty\leq \sigma\sqrt{d\log n}}}E_{(\mu, \theta)}(\Qhat_2-Q(\mu, \theta))^2 \leq Cn^{2\epsilon-2}(\log n)^4.\]
\end{proof}

\subsection{Proof of Theorem~\ref{two-inf-sparse}}\label{sparse-lower-proof}
To prove Theorem~\ref{two-inf-sparse}, it suffices to show that for $0<\beta<\half$,
\begin{equation*}
\gamma_n(\beta, \epsilon, b) \geq \left\{
\begin{array}{llll}
n^{2\epsilon+4b-2}(\log n)^2 &\text{if }b>0, &\text{for }0<\epsilon<\frac{\beta}{2}, & \qquad\qquad(\text{Case } 1) \\
n^{2\epsilon+8b-2} &\text{if }b\leq 0, &\text{for }0<\epsilon\leq\beta, & \qquad\qquad(\text{Case } 2) \\
n^{\epsilon+6b-2} &\text{if }b>0, &\text{for } 0<\epsilon\leq\beta. & \qquad\qquad(\text{Case } 3) \\
\end{array}\right.
\end{equation*}
For individual regions in $\{(\beta, \epsilon, b): 0<\epsilon<\frac{\beta}{2}, 0<\beta<\half, b\in\R\}$, the minimax rate of convergence is then given by the sharpest rate among all cases in which the region belongs to.  For instance, the region $\{(\beta, \epsilon, b): 0<\epsilon<\frac{\beta}{2}, 0<\beta<\half, b>\frac{\epsilon}{2}\}$ is included in Case 1 and Case 3, hence $\gamma_n(\beta, \epsilon, b) \geq \max\{n^{2\epsilon+4b-2}(\log n)^2, n^{\epsilon+6b-2}\} = n^{\epsilon+6b-2}$.  

To establish the desired lower bounds, for each cases, we construct two priors $f$ and $g$ that have small chi-square distance but a large difference in the expected values of the resulting quadratic functionals, and then applying the Constrained Risk Inequality (CRI) in \cite{BrownLow1996}.  The choice of priors $f$ and $g$ are crucial in deriving sharp lower bound for the estimation problem. In fact, the fundamental difference between different phases in the sparse regime for the estimation of $Q(\mu, \theta)$ can be seen from the choice $f$ and $g$.  For some background on lower bound technique, see Appendix~\ref{tools}.

%Two-sequence:  LB Case 1
\begin{proof}[Proof of Case 1]
Our proof builds on arguments similar to that used in \cite{CaiLow2004} and \cite{Baraud2002}, who considered the one-sequence estimation problem.  We first follow the lines of the proof of Theorem 7 in \cite{CaiLow2004}, and then apply a result from \cite{Aldous1983} as was done in \cite{Baraud2002}.  Let 
\[f(x_1, \ldots, x_n, y_1, \ldots, y_n) = \prod_{i=1}^k \psi_s(x_i)\prod_{i=k+1}^n\psi_0(x_i)\prod_{i=1}^n \psi_0(y_i).\]
For $I\in\ell(k, q)$, let
\[g_I(x_1, \ldots, x_n, y_1, \ldots, y_n) = \prod_{i=1}^k \psi_s(x_i)\prod_{i=k+1}^n\psi_0(x_i)\prod_{i=1}^k\psi_{\theta_i}(y_i)\prod_{i=k+1}^n \psi_0(y_i),\]
where $\theta_i = \rho\1(i\in I)$ with $\rho>0$, and let
\[g = \frac{1}{\binom{k}{q}}\sum_{I\in\ell(k, q)} g_I.\]
In both $f$ and $g$, the sequence $\mu = (s, \ldots, s, 0, \ldots, 0)$ is taken to be the same. However, $\theta$ is taken to be all zeros in $f$ but is taken as a mixture in $g$.  The nonzero coordinates of $\theta$ is mixed uniformly over the support of $\mu$ at a common magnitude $\rho$, whose value is yet to be determined.  Our choice of $f$ and $g$ essentially reduces the two-sequence problem to the case where we only have one Gaussian mean sequence of length $k$ with $q$ nonzero coordinates, hence explains the correspondence between the sparse regime in the two-sequence case ($q<\sqrt{k}$) and the sparse regime in the one-sequence case ($k<\sqrt{n}$).  

We now compute the chi-square affinity between $f$ and $g$, which bears the expression
\begin{equation}
\int\frac{g^2}{f} = \frac{1}{\binom{k}{q}^2}\sum_{I\in\ell(k, q)}\sum_{J\in\ell(k, q)}\int\frac{g_Ig_J}{f}. \label{mixprior}
\end{equation}
For $I, J\in\ell(k, q)$, let $m = \text{Card}(I\cap J)$.  Then
\begin{align*}
\int\frac{g_Ig_J}{f}  &= \prod_{i=1}^k\int\frac{\psi_{\rho\1(i\in I)}(y_i)\cdot\psi_{\rho\1(i\in J)}(y_i)}{\psi_0(y_i)} \ dy_i \\
&= \bigg[\int\psi_0(y)\ dy\bigg]^{k-2q+m}\bigg[\int\psi_{\rho}(y)\ dy\bigg]^{2q-2m}\bigg[\int\frac{\psi_{\rho}^2(y)}{\psi_0(y)}\ dy\bigg]^m \\
&= \exp\bigg(\frac{m\rho^2}{\sigma^2}\bigg).
\end{align*}
It follows that
\[\int\frac{g^2}{f} = E\bigg[\exp\bigg(\frac{M\rho^2}{\sigma^2}\bigg)\bigg],\]
where $M$ has a hypergeometric distribution
\begin{equation}
P(M = m) = \frac{\binom{q}{m}\binom{k-q}{q-m}}{\binom{k}{q}}. \label{hyperprob}
\end{equation}
As shown in \cite{Aldous1983}, $M$ has the same distribution as the conditional expectation $E(\tilde{M}|\mathcal{B})$, where $\tilde{M}$ is a Binomial($q$, $\frac{q}{k}$) random variable and $\mathcal{B}$ is a suitable $\sigma$-algebra.  Coupled with Jensen's inequality, this implies that
\[\int\frac{g^2}{f} \leq E\bigg[\exp\bigg(\frac{\tilde{M}\rho^2}{\sigma^2}\bigg)\bigg] = \bigg(1-\frac{q}{k}+\frac{q}{k}e^{\rho^2/\sigma^2}\bigg)^q.\]
Taking $\rho = \sigma\sqrt{(\beta-2\epsilon)\log n}$ gives
\[e^{\rho^2/\sigma^2} = n^{\beta-2\epsilon} = \frac{k}{q^2},\]
hence
\[\int\frac{g^2}{f} \leq \bigg(1 + \frac{1}{q}\bigg)^q \leq e.\]
Since $Q(\mu, \theta) = 0$ under $f$ and $Q(\mu, \theta) = \frac{1}{n}qs^2\rho^2$ under $g$, it follows from CRI that 
\[R^*(n, \Omega(\beta, \epsilon, b)) \geq c\bigg(\frac{1}{n}qs^2\rho^2\bigg)^2 = cn^{2\epsilon+4b-2}(\log n)^2.\]
\end{proof}

%Two-sequence:  LB Case 2
\begin{proof}[Proof of Case 2]
Let
\[f(x_1, \ldots, x_n, y_1, \ldots, y_n) = \prod_{i=1}^n \psi_0(x_i)\prod_{i=1}^n \psi_0(y_i)\]
For $I\in\ell(n, q)$, let
\[g_I(x_1, \ldots, x_n, y_1, \ldots, y_n) = \prod_{i=1}^n\psi_{\mu_i}(x_i)\prod_{i=1}^n\psi_{\theta_i}(y_i),\]
where $\mu_i = \theta_i = \rho\1(i\in I)$ with $\rho>0$, and let
\[g = \frac{1}{\binom{n}{q}}\sum_{I\in\ell(n, q)} g_I.\]
Contrast the choice of $f$ an $g$ here with that used in the proof of Case 1.  Rather than fixing $\mu$ and mixing nonzero coordinates of $\theta$ over the support of $\mu$, in this case mixing is done over all $n$ positions using nonzero coordinates of $\mu$ and $\theta$ simultaneously.

Similar calculation as that used in the proof of Case 1 yields
\begin{equation}
\int\frac{g^2}{f} \leq \bigg(1 - \frac{q}{n} + \frac{q}{n}e^{2\rho^2/\sigma^2}\bigg)^q. \label{letrho}
\end{equation}
Now take $\rho = s = n^b$.  Since $b<0$, it follows that when $n$ is sufficiently large, 
\[e^{2\rho^2/\sigma^2} \leq n^{1-2\epsilon} = \frac{n}{q^2},\]
hence
\[\int\frac{g^2}{f} \leq \bigg(1+\frac{1}{q}\bigg)^q \leq e.\]
Since $Q(\mu, \theta) = 0$ under $f$, and $Q(\mu, \theta) = \frac{1}{n}q\rho^4$ under $g$, it follows from CRI that
\begin{equation*}
R^*(n, \Omega(\beta, \epsilon, b)) \geq c\bigg(\frac{1}{n}q\rho^4\bigg)^2 = cn^{2\epsilon+8b-2}.
\end{equation*}

In fact, when $0<\epsilon\leq\beta<\half$ and $s = \sigma\sqrt{d\log n}$ for some $d>0$, we also have 
\[\inf_{\Qhat}\sup_{\substack{(\mu, \theta): \|\mu\|_0\leq k_n, \|\theta\|_0\leq k_n, \\\|\mu\|_\infty\leq \sigma\sqrt{d\log n}, \|\theta\|_\infty\leq \sigma\sqrt{d\log n}}}E_{(\mu, \theta)}(\Qhat-Q(\mu, \theta))^2 \geq cn^{2\epsilon-2}(\log n)^4.\]
This can be shown by letting $\rho = \sigma\sqrt{\half\min\{d, (1-2\epsilon)\}\log n}$ in \eqref{letrho}.
\end{proof}

%Two-sequence:  LB Case 3
\begin{proof}[Proof of Case 3]
The priors used in this case is very different from that considered in the proofs of Case 1 and Case 2.  Let
\[f(x_1, \ldots, x_n, y_1, \ldots, y_n) = \prod_{i=1}^q\psi_s(x_i)\prod_{i=q+1}^n\psi_0(x_i)\prod_{i=1}^q\psi_s(y_i)\prod_{i=q+1}^n\psi_0(y_i),\]
and
\[g(x_1, \ldots, x_n, y_1, \ldots, y_n) = \prod_{i=1}^q\psi_s(x_i)\prod_{i=q+1}^n\psi_0(x_i)\prod_{i=1}^q\psi_{s-\delta}(y_i)\prod_{i=q+1}^n\psi_0(y_i),\]
where $0<\delta<s$.  Note that no mixing is performed in this case.  Instead, we fix the sequence $\mu = (s, \ldots, s, 0, \ldots, 0)$ in both $f$ and $g$, and perturb the nonzero entries of $\theta$ by a small amount $\delta$ in $g$.  This set of priors provides the sharpest rate for the case when signal is strong, i.e., $s=n^b$ is large.  The intuition is that when $s$ is large, estimation of $Q(\mu, \theta)$ is most difficult due to the indistinguishability between $\theta_i = s$ and $\theta_i = s-\delta$, where $\delta\approx 0$.

The chi-square affinity between $f$ and $g$ is given by
\[\int\frac{g^2}{f} = e^{q\delta^2/\sigma^2}.\]
Let $\delta = \sigma/\sqrt{q} = \sigma n^{-\epsilon/2}$.  Then we have 
\[\int\frac{g^2}{f} = e <\infty.\]  
Since $Q(\mu, \theta) = \frac{1}{n}qs^4$ under $f$ and $Q(\mu, \theta) = \frac{1}{n}qs^2(s-\delta)^2$ under $g$, it follows from CRI that 
\begin{align*}
R^*(n, \Omega(\beta, \epsilon, b)) &\geq c\bigg(\frac{1}{n}qs^2\big(s^2-(s-\delta)^2\big)\bigg)^2 \\
&= c\bigg(\frac{1}{n}\sqrt{q}s^3\bigg)^2(1+o(1)) = cn^{\epsilon+6b-2}(1+o(1)).
\end{align*}
\end{proof}

\bibliographystyle{chicago}
\bibliography{Ref.bib}

%%%%%%%%%%%%%%%%%%%%%%%%%%%%%%%%%%%%%%%%%%%%%%%%%%%%%
\newpage
\begin{appendices}
\section{Further Discussions}\label{general}
In the following, we expand on several of the topics discussed in the paper.  In particular, we address the estimation of $Q(\mu, \theta)$ over a more general parameter space than is considered in \eqref{twoSpace}, allowing the signal strengths of $\mu$ and $\theta$ to differ.  We then examine the effect of replacing the $\ell_\infty$-norm constraint on $\mu$ and $\theta$ with the $\ell_2$-norm constraint on the estimation problem.  

\subsection{Estimation of $Q(\mu, \theta)$ with Different Signal Strengths}\label{generalEst}
We consider in Section~\ref{twosample} the estimation of $Q(\mu, \theta) = \frac{1}{n}\sum_{i=1}^n\mu_i^2\theta_i^2$ over the parameter space \eqref{compSpace} where $j_n = k_n = n^\beta$ and $r_n = s_n = n^b$, with $0<\epsilon\leq\beta<\half$ and $b\in\R$.  In this section, we present the estimation result for $Q(\mu, \theta)$ with $j_n = k_n = n^\beta$ but allow $r_n$ and $s_n$ to differ.  Specifically, we consider the parameter space
\begin{align}
\Omega(\beta, \epsilon, a, b) &= \{(\mu, \theta)\in\R^n\times\R^n: \|\mu\|_0\leq k_n, \|\mu\|_\infty\leq r_n, \|\theta\|_0\leq k_n, \|\theta\|_\infty\leq s_n, \nonumber\\
&\qquad\qquad\qquad\qquad\qquad\|\mu\star\theta\|_0\leq q_n\}. \label{paramTwoab}
\end{align}
where $k_n = n^\beta, q_n = n^\epsilon$ with $0<\epsilon\leq\beta<\half$, and $r_n = n^a, s_n = n^b$ with $a, b\in\R$.  

Similar as before, the estimation problem can be divided into three regimes: the sparse regime ($0<\epsilon<\frac{\beta}{2}$), the moderately dense regime ($\frac{\beta}{2}\leq\epsilon\leq\frac{3\beta}{4}$), and the strongly dense regime ($\frac{3\beta}{4}<\epsilon\leq\beta$).  When $\mu$ and $\theta$ have different signal strengths, the minimax rates of convergence for $Q(\mu, \theta)$ exhibit more elaborate phase transitions, though they still bear the familiar form
\[R^*(n, \Omega(\beta, \epsilon, a, b)) := \inf_{\Qhat}\sup_{(\mu, \theta)\in\Omega(\beta, \epsilon, a, b)} E_{(\mu, \theta)}(\Qhat-Q(\mu, \theta))^2 \asymp \gamma_n(\beta, \epsilon, a, b),\]
where $\gamma_n(\beta, \epsilon, a, b)$ is a function of $n$ indexed by $\beta, \epsilon, a$, and $b$. 

For readability, we summarize the corresponding $\gamma_n(\beta, \epsilon, a, b)$ in Table~\ref{sparse} (sparse regime), Table~\ref{dense1} (moderately dense regime), and Table~\ref{dense2} (strongly dense regime), respectively.  In all three tables, we added cases where $r_n = \sigma\sqrt{c\log n}$ (as the transition from $a<0$ to $a>0$), and $s_n = \sigma\sqrt{d\log n}$ (as the transition from $b<0$ to $b>0$).  Moreover, the minimax rates of convergence are attained by the same estimators as before over the respective regimes, as shown by Theorem~\ref{two-inf-sparse-up-gen} and Theorem~\ref{two-inf-dense-up-gen}.  

Although we do not present the result here due to its lengthiness, estimation of $Q(\mu, \theta)$ for the case where no constraint is imposed on either sparsity or signal strength of $\mu$ and $\theta$ can be analyzed analogously provided that the magnitude of the simultaneous sparsity $\epsilon$ is compared to $\alpha$ if $a\geq b$, and to $\beta$ if $b\geq a$, for the characterization of the sparse and dense regimes.  
 
\begin{Theorem}[Sparse Regime]\label{two-inf-sparse-up-gen}
Let $0<\epsilon<\frac{\beta}{2}$ and $0<\beta<\half$.  Then $\Qhat_2$ defined in \eqref{estimator2} with $\tau_n = \log n$ attains the minimax rate of convergence over $\Omega(\beta, \epsilon, a, b)$ for  $(a, b)\in \{(a, b): a\wedge b>0\}$.  On the other hand, $\Qhat_0$ defined in \eqref{estimator0} attains the minimax rate of convergence over $\Omega(\beta, \epsilon, a, b)$ for $(a, b)\in \{(a, b): a\wedge b\leq 0\}$.
\end{Theorem}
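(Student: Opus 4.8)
The plan is to establish matching upper and lower bounds for $R^*(n, \Omega(\beta, \epsilon, a, b))$ in the sparse regime by rerunning the arguments behind Theorem~\ref{two-inf-sparse-up} and Theorem~\ref{two-inf-sparse}, now tracking the two signal-strength exponents $a$ and $b$ separately through every estimate. First I would redo the bias--variance analysis of $\Qhat_2$ from Section~\ref{sparse-upper-proof} under $\|\mu\|_\infty\le r_n$ and $\|\theta\|_\infty\le s_n$. The same two applications of Lemma~\ref{qbound2} and Lemma~\ref{varboundlemma} yield an asymmetric bias bound of order $n^{-1}[q_n r_n^2\min\{\sigma^2\tau, s_n^2\} + q_n s_n^2\min\{\sigma^2\tau, r_n^2\}]$ and a variance bound that is a sum of the terms $q_n r_n^2 s_n^2$, $q_n r_n^4 s_n^2$, $q_n r_n^2 s_n^4$ and lower-order pieces carrying the factor $g(\tau):=(4\tau^{1/2}+18)e^{-\tau/2}$, all divided by $n^2$. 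Taking $\tau=\log n$ and using $a\wedge b>0$ to kill the $g(\log n)$ contributions, this collapses to the claimed rate
\[
\sup_{(\mu,\theta)\in\Omega(\beta,\epsilon,a,b)} E_{(\mu,\theta)}(\Qhat_2-Q(\mu,\theta))^2 \le C\Big[n^{2\epsilon+4(a\vee b)-2}(\log n)^2 + n^{\epsilon+4(a\vee b)+2(a\wedge b)-2}\Big],
\]
which specializes to \eqref{Q3rate} when $a=b$. For $a\wedge b\le 0$ the computation \eqref{Q0rate} gives verbatim $\sup_\Omega E(\Qhat_0-Q)^2=(n^{-1}q_n r_n^2 s_n^2)^2=n^{2\epsilon+4a+4b-2}$, and one checks this equals $\gamma_n(\beta,\epsilon,a,b)$ of Table~\ref{sparse}.

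\textbf{Lower bound.}
These rates are matched by the Constrained Risk Inequality of \cite{BrownLow1996} applied to prior pairs supported on $\Omega(\beta,\epsilon,a,b)$, generalizing the three constructions in the proof of Theorem~\ref{two-inf-sparse}. For $a\wedge b>0$ I would use (i) the Case~1 mixing prior --- fix $\mu\equiv r_n$ on a $k_n$-set and mix the $q_n$ nonzero coordinates of $\theta$ uniformly over that set at level $\rho=\sigma\sqrt{(\beta-2\epsilon)\log n}$, with the $\chi^2$ affinity controlled by the Aldous coupling and Jensen's inequality exactly as before --- together with its mirror (swap the roles of $\mu$ and $\theta$), giving $c\,n^{2\epsilon+4(a\vee b)-2}(\log n)^2$; and (ii) the Case~3 perturbation prior --- hold both sequences at full strength on a $q_n$-set and shift the nonzero entries of one of them by $\delta=\sigma/\sqrt{q_n}$ --- giving $c\,n^{\epsilon+4(a\vee b)+2(a\wedge b)-2}$. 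For $a\wedge b\le 0$, say $a=a\wedge b\le 0$, the new ingredient is this: let $f$ put $\mu\equiv 0$ and $\theta$ \emph{deterministically} equal to $s_n$ on coordinates $1,\dots,k_n$, and let $g$ keep that $\theta$ while mixing $\mu_i=r_n\1(i\in J)$ uniformly over $q_n$-subsets $J\subseteq\{1,\dots,k_n\}$. Since $\theta$ is identical under $f$ and $g$, the $\chi^2$ affinity involves only the $X$-coordinates, so the possibly huge exponent $b$ never enters; Aldous--Jensen bounds it by $\exp((q_n^2/k_n)(e^{r_n^2/\sigma^2}-1))$, which is $O(1)$ precisely because $2\epsilon+2a<\beta$ in the sparse regime (as $a\le 0<\beta/2-\epsilon$). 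With $Q=0$ under $f$ and $Q=n^{-1}q_n r_n^2 s_n^2$ under $g$, CRI yields $c\,n^{2\epsilon+4a+4b-2}$, matching $\Qhat_0$. The boundary parametrizations $r_n=\sigma\sqrt{c\log n}$, $s_n=\sigma\sqrt{d\log n}$ listed in Table~\ref{sparse} are obtained by retuning $\rho$ and $\delta$ in these same priors.

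\textbf{Main obstacle.}
The bookkeeping of the lower bound is the crux: unlike the symmetric case, $a$ and $b$ range independently over positive, zero, and negative values in either order, so one must check for each sub-region which of the three prior families dominates and that it lies in $\Omega(\beta,\epsilon,a,b)$. The genuinely new and delicate step is the $a\wedge b\le 0$ construction above --- placing the stronger sequence on its \emph{full} support $k_n$ rather than on the simultaneous-support scale $q_n$, so that its arbitrarily large signal strength does not enter the $\chi^2$ distance, while the mixed weak sequence still realizes the full value $n^{-1}q_n r_n^2 s_n^2$ of the functional. This is exactly what keeps $\Qhat_0=0$ rate-optimal no matter how strong one of the two sequences is.
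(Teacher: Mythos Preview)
Your proposal is correct and follows exactly the route the paper intends: the paper does not spell out a separate proof of Theorem~\ref{two-inf-sparse-up-gen} but states that the analysis is the same as for Theorems~\ref{two-inf-sparse-up} and~\ref{two-inf-sparse} ``but less tedious'' in the symmetric case, and your bias--variance bookkeeping for $\Qhat_2$ and your three CRI constructions are precisely that asymmetric rerun. In particular, the prior you single out as the ``genuinely new and delicate step'' for $a\wedge b\le 0$ --- fixing the strong sequence deterministically on its full $k_n$-support so that it drops out of the $\chi^2$ affinity, and mixing only the weak sequence over $q_n$-subsets --- is exactly the construction the paper deploys in the detection lower bound for the sparse regime (proof of Theorem~\ref{two-det-Q}, Section~\ref{det-low-proof}), so your instinct and the paper's are aligned.
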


\begin{Theorem}[Dense Regime]\label{two-inf-dense-up-gen}
Let $\frac{\beta}{2}\leq\epsilon\leq\beta$ and $0<\beta<\half$.  Then $\Qhat_4$ defined in \eqref{estimator4} with $\tau_n = 4\log n$ attains the minimax rate of convergence over $\Omega(\beta, \epsilon, a, b)$ for $(a, b)\in \{(a, b): a\vee b>0\text{ and } a\wedge b>\frac{\beta-2\epsilon}{4}\}$.  On the other hand, $\Qhat_0$ defined in \eqref{estimator0} attains the minimax rate of convergence over $\Omega(\beta, \epsilon, a, b)$ for $(a, b)\in \{(a, b): a\vee b\leq0\text{ or } a\wedge b\leq\frac{\beta-2\epsilon}{4}\}$.
\end{Theorem}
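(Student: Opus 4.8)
The plan is to prove the two assertions — rate-optimality of $\Qhat_4$ on $\{a\vee b>0,\ a\wedge b>\frac{\beta-2\epsilon}{4}\}$ and of $\Qhat_0$ on the complementary region — by establishing matching upper and lower bounds, running the same template as Theorems~\ref{two-inf-dense-up} and \ref{two-inf-dense} but carrying the two magnitude budgets $r_n=n^a$ and $s_n=n^b$ through every estimate. The upper bound for $\Qhat_0=0$ is immediate: $\sup_{\Omega(\beta,\epsilon,a,b)}E(\Qhat_0-Q)^2=(\sup Q)^2=(n^{-1}q_nr_n^2s_n^2)^2=n^{2\epsilon+4a+4b-2}$, which is exactly the entry of $\gamma_n$ tabulated in the weak-signal cells of Tables~\ref{dense1}--\ref{dense2}. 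For $\Qhat_4$ with $\tau_n=4\log n$ I would re-run the bias--variance decomposition from the proof of Theorem~\ref{two-inf-dense-up} essentially verbatim, the only change being that in each product $\mu_i^2\theta_i^2$ one factor is bounded by $r_n^2$ and the other by $s_n^2$, and in the variance one keeps $\|\mu\|_2^2\le k_nr_n^2$ and $\|\theta\|_2^2\le k_ns_n^2$ separate; this produces an upper bound that is a maximum of a handful of monomials in $n$ (times powers of $\log n$), and matching it term by term against the tabulated $\gamma_n(\beta,\epsilon,a,b)$ — and checking that the "weak" monomial $n^{2\epsilon+4a+4b-2}$ is dominated precisely on $\{a\vee b>0,\ a\wedge b>\frac{\beta-2\epsilon}{4}\}$ — finishes this half.

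The lower bound is the substantive part, and I would get it by the Constrained Risk Inequality route of Section~\ref{proofs}: for each cell of the (now two-dimensional) phase diagram, exhibit priors $f,g$ supported on $\Omega(\beta,\epsilon,a,b)$ with $\chi^2$-affinity $O(1)$ and $|E_gQ-E_fQ|\asymp\sqrt{\gamma_n}$. The rates $n^{\epsilon+6(a\wedge b)-2}$ come from the hardest-hyperrectangle / single-coordinate perturbation argument of Case~3 in Section~\ref{sparse-lower-proof}, now perturbing only the \emph{weaker} of the two sequences; the intermediate rates ($n^{\beta+4a-2}$, $n^{\beta+4b-2}$, $n^{2\epsilon-2}(\log n)^4$) come from mixing over the vertices of a suitably rich collection of hyperrectangles built in the coordinates of whichever sequence is varied while the other is held fixed, so that the Aldous \cite{Aldous1983} conditional-expectation trick reduces the affinity to the familiar $(1-q/k+(q/k)e^{\rho^2/\sigma^2})^q$-type bound with the relevant single-sequence magnitude $\rho$. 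For the weak-signal region the bound $n^{2\epsilon+4a+4b-2}$ on $\{a\vee b\le0\}$ follows from the mix-over-all-$n$-positions prior of Case~2 in Section~\ref{sparse-lower-proof}, now taking $\mu_i=r_n,\ \theta_i=s_n$ on the random $q_n$-subset so that $Q=n^{-1}q_nr_n^2s_n^2$ under $g$ and the affinity is controlled by $e^{(r_n^2+s_n^2)/\sigma^2}\le n^{1-2\epsilon}$. On the remaining piece $\{a\wedge b\le\frac{\beta-2\epsilon}{4},\ a\vee b>0\}$ (say $b\le\frac{\beta-2\epsilon}{4}$) I would condition on the strong sequence — take $\mu$ deterministic at level $r_n$ on a support of size $k_n$ — and reduce to the one-sequence dense-regime lower bound for $Q(\theta)$ at weak signal: the identity $Q(\mu,\theta)=\frac{r_n^2k_n}{n}\cdot\frac1{k_n}\sum_{i\in\mathrm{supp}(\mu)}\theta_i^2$ maps our target rate onto $(r_n^2k_n/n)^2$ times the $\Qhat_0$-rate of the induced one-sequence problem on $k_n$ coordinates with $q_n$ nonzeros and magnitude $s_n$; note that the one-sequence weak threshold $\frac{1-2\epsilon/\beta}{4}$ rescales to exactly $b\le\frac{\beta-2\epsilon}{4}$, so the regions line up, and the induced one-sequence lower bound $q_n^2s_n^4/k_n^2$ is available from \eqref{onesamplerateD}.

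The main obstacle I expect is the bookkeeping of the phase diagram together with the feasibility of the priors. Unlike the equal-strength case $\gamma_n(\beta,\epsilon,a,b)$ has a genuinely two-dimensional transition structure, and for each cell one must produce a prior that simultaneously respects the two separate $\ell_\infty$-budgets $\|\mu\|_\infty\le n^a$, $\|\theta\|_\infty\le n^b$ \emph{and} the coupled sparsity constraint $\|\mu\star\theta\|_0\le n^\epsilon$; the latter is what forces the mixing in the asymmetric weak/strong region to be over subsets of the strong sequence's support rather than over all $n$ coordinates, and closing the $\chi^2$ bound there is delicate because the naive ``mix $\theta$ over the $k_n$-support of $\mu$'' fails in the dense regime ($q_n\gg\sqrt{k_n}$) — this is precisely why the one-sequence reduction, and the known dense-regime lower bound that underlies it, must be invoked rather than reproved. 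Finally, verifying that the moderately-dense versus strongly-dense dichotomy ($\epsilon\lessgtr\frac{3\beta}{4}$) drops exactly one intermediate phase, as it does in Theorem~\ref{two-inf-dense}, reduces to checking when two of the transition thresholds cross, which I expect to be routine once the cell structure is written out.
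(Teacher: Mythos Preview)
Your plan is correct and matches the paper's implicit approach: the paper does not write out a separate proof of Theorem~\ref{two-inf-dense-up-gen} but relies on rerunning the bias--variance analysis of Theorem~\ref{two-inf-dense-up} with separate budgets $r_n,s_n$ for the upper bound, and on the prior constructions of Cases~2--5 (Sections~\ref{sparse-lower-proof} and~\ref{estproof}) with the obvious asymmetric modifications for the lower bound.

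Two remarks. First, the Case~3 perturbation of the weaker sequence yields the rate $n^{\epsilon+4(a\vee b)+2(a\wedge b)-2}$, not $n^{\epsilon+6(a\wedge b)-2}$ as you wrote; the two agree only when $a=b$, and it is the former that populates the strong-signal cells of Tables~\ref{dense1}--\ref{dense2}. Second, on the region $\{a\wedge b\le\frac{\beta-2\epsilon}{4},\ a\vee b>0\}$ your one-sequence reduction is valid but unnecessary. The ``naive mix $\theta$ over the $k_n$-support of $\mu$'' fails only in its \emph{unsigned} (Case~1) form; the sign-flipped prior of Case~4 --- fix $\mu\equiv r_n$ on $k_n$ coordinates, mix $\theta$ over $q_n$-subsets at magnitude $s_n$ with i.i.d.\ random signs --- has affinity bounded by $(1+\frac{q}{k}[\cosh(s^2/\sigma^2)-1])^q\le\exp(q^2s^4/(2\sigma^4k))$, which is $O(1)$ exactly when $b\le\frac{\beta-2\epsilon}{4}$, and delivers $n^{2\epsilon+4a+4b-2}$ directly. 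The paper uses precisely this prior in the dense-regime lower bound for Theorem~\ref{two-det-Q} (Section~\ref{det-low-proof}, part~(1)), so the direct route is already available and is what the paper has in mind.
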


\begin{Remark}
{\rm
Whenever $r_n = \sigma\sqrt{c\log n}$ and $s_n = \sigma\sqrt{d\log n}$ for some $c, d>0$, the minimax rate of convergence is attained by $\Qhat_0$ and $\Qhat_2$ for $0<\epsilon\leq\beta$, and also by $\Qhat_4$ for $\frac{\beta}{2}<\epsilon\leq\beta$.
} 
\end{Remark}

The shaded regions in the three tables represent the region where $\Qhat_0$ attains the minimax rate of convergence.  Thus, $\{(a, b): a\wedge b\leq 0\}$ is shaded in Table~\ref{sparse}, while $\{(a, b): a\vee b\leq0\text{ or } a\wedge b\leq\frac{\beta-2\epsilon}{4}\}$ is shaded in Table~\ref{dense1} and Table~\ref{dense2}.  Some regions involving $r_n = \sigma\sqrt{c\log n}$ or $s_n = \sigma\sqrt{d\log n}$ are shaded as well, as these represent the boundary of estimation, where we are indifferent in terms of estimating $Q(\mu, \theta)$ by $\Qhat_0$ or $\Qhat_2$ in the sparse regime, and by $\Qhat_0$ or $\Qhat_4$ in the dense regime.

Note that the estimation result for the dense regime turns out to be interesting (and more inspiring) when $r_n$ and $s_n$ can differ.  It seems that estimation is desirable whenever the signal strengths of both sequences barely exceed some small threshold ($a\wedge b>\frac{\beta-2\epsilon}{4}$, but $\beta-2\epsilon\leq 0$ in this case) and at least one sequence has sufficiently strong signal ($a\vee b>0$).  This is in contrast to the sparse regime where estimation is desirable only when the signal strength of both sequences are sufficiently strong ($a\wedge b>0$).  The intuitive explanation is that in the dense regime, knowing that $\mu_i\neq 0$ (because of large $X_i^2$) most often suggests that $\theta_i\neq 0$ too (even if $Y_i^2$ is small), and vice versa, so we cannot afford to estimate $\mu_i^2\theta_i^2$ by 0 with this additional information.  On the contrary, in the sparse regime, knowing that $\mu_i\neq 0$ does not entail much about whether $\theta_i\neq 0$ due to the sparseness of simultaneously nonzero coordinates.  Therefore it is better to estimate $\mu_i^2\theta_i^2$ by 0 unless both $X_i^2$ and $Y_i^2$ are large.

In fact, the minimax rates of convergence for the sparse regime are relatively simple to describe, when $r_n$ is not necessarily equal to $s_n$: 
\begin{equation*}
\gamma_n(\beta, \epsilon, a, b) = \left\{
\begin{array}{ll}
n^{2\epsilon+4a+4b-2} &\text{if }a\wedge b\leq 0, \\
n^{2\epsilon+4a\vee b-2}(\log n)^2 &\text{if }0<a\wedge b\leq\frac{\epsilon}{2}, \\
n^{\epsilon+4a\vee b+2a\wedge b-2} &\text{if }a\wedge b>\frac{\epsilon}{2}.
\end{array}\right.
\end{equation*}
Unfortunately, we do not have such easy representation for the minimax rates of convergence in the dense regime.  Nonetheless, due to the two-dimensional nature of the estimation problem, we find tables useful not only in presenting the minimax rates of convergence but also in illustrating the regions with weak signals (i.e., the shaded regions).

\newgeometry{left=1.8cm,top=1.5cm,right=1.8cm,bottom=1.5cm}
\begin{landscape}
% Table 1
\begin{table}
\begin{centering}
\begin{tabular}{|l| l l l l|}
\hline
& $b\leq 0$ & $s_n=\sigma\sqrt{d\log n}$ & $0<b\leq\frac{\epsilon}{2}$ & $b>\frac{\epsilon}{2}$ \\
\hline
$a\leq 0$ & \cellcolor[gray]{0.85}$n^{2\epsilon+4a+4b-2}$ & \cellcolor[gray]{0.85}$n^{2\epsilon+4a-2}(\log n)^2$ & \cellcolor[gray]{0.85}$n^{2\epsilon+4a+4b-2}$ & \cellcolor[gray]{0.85}$n^{2\epsilon+4a+4b-2}$ \\
$r_n=\sigma\sqrt{c\log n}$ & \cellcolor[gray]{0.85}$n^{2\epsilon+4b-2}(\log n)^2$ & \cellcolor[gray]{0.85}$n^{2\epsilon-2}(\log n)^4$ & \cellcolor[gray]{0.85}$n^{2\epsilon+4b-2}(\log n)^2$ & \cellcolor[gray]{0.85}$n^{2\epsilon+4b-2}(\log n)^2$ \\
$0<a\leq\frac{\epsilon}{2}$ & \cellcolor[gray]{0.85}$n^{2\epsilon+4a+4b-2}$ & \cellcolor[gray]{0.85}$n^{2\epsilon+4a-2}(\log n)^2$ & $n^{2\epsilon+4a\vee b-2}(\log n)^2$ & $n^{2\epsilon+4b-2}(\log n)^2$ \\
$a>\frac{\epsilon}{2}$ & \cellcolor[gray]{0.85}$n^{2\epsilon+4a+4b-2}$ & \cellcolor[gray]{0.85}$n^{2\epsilon+4a-2}(\log n)^2$ & $n^{2\epsilon+4a-2}(\log n)^2$ & $n^{\epsilon+4a\vee b+2a\wedge b-2}$ \\
\hline
\end{tabular}
\vspace{-.75\baselineskip}
\caption{Minimax rates of convergence in the sparse regime: $0 < \epsilon<\frac{\beta}{2}$.}
\label{sparse}
\end{centering}
\end{table}

% Table 2
\begin{table}
\begin{centering}
\begin{tabular}{|l|l l l l l l|}
\hline
& $b\leq\frac{\beta-2\epsilon}{4}$ & $\frac{\beta-2\epsilon}{4}<b\leq 0$ & $s_n=\sigma\sqrt{d\log n}$ & $0<b\leq\frac{2\epsilon-\beta}{4}$ & $\frac{2\epsilon-\beta}{4}<b\leq\frac{\beta-\epsilon}{2}$ & $b>\frac{\beta-\epsilon}{2}$ \\
\hline
$a\leq\frac{\beta-2\epsilon}{4}$ & \cellcolor[gray]{0.85}$n^{2\epsilon+4a+4b-2}$ & \cellcolor[gray]{0.85}$n^{2\epsilon+4a+4b-2}$ & \cellcolor[gray]{0.85}$n^{2\epsilon+4a-2}(\log n)^2$ & \cellcolor[gray]{0.85}$n^{2\epsilon+4a+4b-2}$ & \cellcolor[gray]{0.85}$n^{2\epsilon+4a+4b-2}$ & \cellcolor[gray]{0.85}$n^{2\epsilon+4a+4b-2}$ \\
$\frac{\beta-2\epsilon}{4}<a\leq 0$ & \cellcolor[gray]{0.85}$n^{2\epsilon+4a+4b-2}$ & \cellcolor[gray]{0.85}$n^{2\epsilon+4a+4b-2}$ & \cellcolor[gray]{0.85}$n^{2\epsilon+4a-2}(\log n)^2$ & $\!\begin{aligned}[t]&\max\{n^{\beta+4b-2}, \\&\quad n^{2\epsilon+4a-2}(\log n)^2\}\end{aligned}$ & $n^{\beta+4b-2}$ & $n^{\beta+4b-2}$ \\
$r_n=\sigma\sqrt{c\log n}$ & \cellcolor[gray]{0.85}$n^{2\epsilon+4b-2}(\log n)^2$ & \cellcolor[gray]{0.85}$n^{2\epsilon+4b-2}(\log n)^2$ & $\cellcolor[gray]{0.85}n^{2\epsilon-2}(\log n)^4$ & $n^{2\epsilon-2}(\log n)^4$ & $n^{\beta+4b-2}$ & $n^{\beta+4b-2}$ \\
$0<a\leq\frac{2\epsilon-\beta}{4}$ & \cellcolor[gray]{0.85}$n^{2\epsilon+4a+4b-2}$ & $\!\begin{aligned}[t]&\max\{n^{\beta+4a-2}, \\&\quad n^{2\epsilon+4b-2}(\log n)^2\}\end{aligned}$ & $n^{2\epsilon-2}(\log n)^4$ & $n^{2\epsilon-2}(\log n)^4$ & $n^{\beta+4b-2}$ & $n^{\beta+4b-2}$ \\
$\frac{2\epsilon-\beta}{4}<a\leq\frac{\beta-\epsilon}{2}$ & \cellcolor[gray]{0.85}$n^{2\epsilon+4a+4b-2}$ & $n^{\beta+4a-2}$ & $n^{\beta+4a-2}$ & $n^{\beta+4a-2}$ & $n^{\beta+4a\vee b-2}$ & $n^{\beta+4b-2}$ \\
$a>\frac{\beta-\epsilon}{2}$ & \cellcolor[gray]{0.85}$n^{2\epsilon+4a+4b-2}$ & $n^{\beta+4a-2}$ & $n^{\beta+4a-2}$ & $n^{\beta+4a-2}$ & $n^{\beta+4a-2}$ & $n^{\epsilon+4a\vee b+2a\wedge b-2}$\\
\hline
\end{tabular}
\vspace{-.75\baselineskip}
\caption{Minimax rates of convergence in the moderately dense regime: $\frac{\beta}{2} \leq \epsilon \leq \frac{3\beta}{4}$.  In this case, we have $\frac{2\epsilon-\beta}{4}\leq\frac{\beta-\epsilon}{2}$.}
\label{dense1}
\end{centering}
\end{table}

% Table 3
\begin{table}
\begin{centering}
\begin{tabular}{|l|l l l l l l|}
\hline
& $b\leq\frac{\beta-2\epsilon}{4}$ & $\frac{\beta-2\epsilon}{4}<b\leq 0$ & $s_n=\sigma\sqrt{d\log n}$ & $0<b\leq\frac{\beta-\epsilon}{2}$ & $\frac{\beta-\epsilon}{2}<b\leq\frac{2\epsilon-\beta}{4}$ & $b>\frac{2\epsilon-\beta}{4}$ \\
\hline
$a\leq\frac{\beta-2\epsilon}{4}$ & \cellcolor[gray]{0.85}$n^{2\epsilon+4a+4b-2}$ & \cellcolor[gray]{0.85}$n^{2\epsilon+4a+4b-2}$ & \cellcolor[gray]{0.85}$n^{2\epsilon+4a-2}(\log n)^2$ & \cellcolor[gray]{0.85}$n^{2\epsilon+4a+4b-2}$ & \cellcolor[gray]{0.85}$n^{2\epsilon+4a+4b-2}$ & \cellcolor[gray]{0.85}$n^{2\epsilon+4a+4b-2}$ \\
$\frac{\beta-2\epsilon}{4}<a\leq 0$ & \cellcolor[gray]{0.85}$n^{2\epsilon+4a+4b-2}$ & \cellcolor[gray]{0.85}$n^{2\epsilon+4a+4b-2}$ & \cellcolor[gray]{0.85}$n^{2\epsilon+4a-2}(\log n)^2$ & $\!\begin{aligned}[t]&\max\{n^{\beta+4b-2}, \\&\quad n^{2\epsilon+4a-2}(\log n)^2\}\end{aligned}$ & $\!\begin{aligned}[t]&\max\{n^{\beta+4b-2}, \\&\quad n^{2\epsilon+4a-2}(\log n)^2\}\end{aligned}$ & $n^{\beta+4b-2}$ \\
$r_n=\sigma\sqrt{c\log n}$ & \cellcolor[gray]{0.85}$n^{2\epsilon+4b-2}(\log n)^2$ & \cellcolor[gray]{0.85}$n^{2\epsilon+4b-2}(\log n)^2$ & \cellcolor[gray]{0.85}$n^{2\epsilon-2}(\log n)^4$ & $n^{2\epsilon-2}(\log n)^4$ & $n^{2\epsilon-2}(\log n)^4$ & $n^{\beta+4b-2}$ \\
$0<a\leq\frac{\beta-\epsilon}{2}$ & \cellcolor[gray]{0.85}$n^{2\epsilon+4a+4b-2}$ & $\!\begin{aligned}[t]&\max\{n^{\beta+4a-2}, \\&\quad n^{2\epsilon+4b-2}(\log n)^2\}\end{aligned}$ & $n^{2\epsilon-2}(\log n)^4$ & $n^{2\epsilon-2}(\log n)^4$ & $n^{2\epsilon-2}(\log n)^4$ & $n^{\beta+4b-2}$ \\
$\frac{\beta-\epsilon}{2}<a\leq\frac{2\epsilon-\beta}{4}$ & \cellcolor[gray]{0.85}$n^{2\epsilon+4a+4b-2}$ & $\!\begin{aligned}[t]&\max\{n^{\beta+4a-2}, \\&\quad n^{2\epsilon+4b-2}(\log n)^2\}\end{aligned}$ & $n^{2\epsilon-2}(\log n)^4$ & $n^{2\epsilon-2}(\log n)^4$ & $\!\begin{aligned}[t]&\max\{n^{2\epsilon-2}(\log n)^4, \\
&\quad n^{\epsilon+4a\vee b+2a\wedge b-2}\}\end{aligned}$ & $n^{\epsilon+2a+4b-2}$ \\
$a>\frac{2\epsilon-\beta}{4}$ & \cellcolor[gray]{0.85}$n^{2\epsilon+4a+4b-2}$ & $n^{\beta+4a-2}$ & $n^{\beta+4a-2}$ & $n^{\beta+4a-2}$ & $n^{\epsilon+4a+2b-2}$ & $n^{\epsilon+4a\vee b+2a\wedge b-2}$\\
\hline
\end{tabular}
\vspace{-.75\baselineskip}
\caption{Minimax rates of convergence in the strongly dense regime: $\frac{3\beta}{4} < \epsilon \leq \beta$.  In this case, we have $\frac{\beta-\epsilon}{2}<\frac{2\epsilon-\beta}{4}$.}
\label{dense2}
\end{centering}
\end{table}
\end{landscape}
\restoregeometry

\subsection{Estimation of $Q(\mu, \theta)$ with $\ell_2$-norm Constraint on $\mu$ and $\theta$}
In Section~\ref{twosample}, we consider the estimation of $Q(\mu, \theta)$ over the the parameter space defined in \eqref{twoSpace}.  Despite our explicit interest in the analysis of parameter space with rare signals, our choice of the $\ell_\infty$-norm in encoding signal strength seems somewhat arbitrary.  One might ask if the estimation problem exhibits different behavior had we chosen a different norm.  We have a partial answer for this.  Consider the following family of parameter spaces where signal strength is expressed in terms of $\ell_2$-norm rather than $\ell_\infty$-norm:
\begin{align*}
\tilde{\Omega}(\alpha, \beta, \epsilon, \tilde{a}, \tilde{b})  &= \{(\mu, \theta)\in\R^n\times\R^n: \|\mu\|_0\leq j_n, \|\mu\|_2\leq \tilde{r}_n, \|\theta\|_0\leq k_n, \|\theta\|_2\leq \tilde{s}_n, \\
&\qquad\qquad\qquad\qquad\qquad\|\mu\star\theta\|_0\leq q_n\},
\end{align*}
where $j_n = n^\alpha, k_n = n^\beta, q_n = n^\epsilon, 0<\epsilon\leq\alpha\wedge\beta<\half$ and $\tilde{r}_n = n^{\tilde{a}}, \tilde{s}_n = n^{\tilde{b}}, \tilde{a}, \tilde{b}\in\R$.  Due to the ease of presentation of estimation result in this case, we allow different levels of both sparsity and signal strength for $\mu$ and $\theta$ for greater generality.  The reader is free to compare the estimation results in this case with that presented in Section~\ref{generalEst}, where we also allow $r_n$ and $s_n$ to differ but for the $\ell_\infty$-norm constraint.  It turns out that the estimation problem in the case of $\ell_2$-norm constraint is in some sense degenerate and not as meaningful when compared to that with $\ell_\infty$-norm constraint.  We have
\[\inf_{\Qhat}\sup_{(\mu, \theta)\in \tilde{\Omega}(\alpha, \beta, \epsilon, \tilde{a}, \tilde{b})}E_{(\mu, \theta)}(\Qhat-Q(\mu, \theta))^2 \asymp \gamma_n(\alpha, \beta, \epsilon, \tilde{a}, \tilde{b}),\]
where
\begin{equation*}
\gamma_n(\alpha, \beta, \epsilon, \tilde{a}, \tilde{b}) = \left\{
\begin{array}{ll}
n^{4\tilde{a}+4\tilde{b}-2} &\text{if }\tilde{a}\wedge \tilde{b}\leq 0, \\
n^{4a\vee b+2a\wedge b-2} &\text{if }\tilde{a}\wedge \tilde{b}>0.
\end{array}\right.
\end{equation*}

Note that the minimax rates of convergence here are independent of $\alpha, \beta$, and $\epsilon$.  A lower bound analysis reveals that the worst case for the estimation problem happens when the signal is concentrated at one coordinate, i.e., $\mu_i = \tilde{r}_n, \theta_i = \tilde{s}_n$ for some $i$ and $\mu_j = \theta_j = 0$ for all $j\neq i$.  The estimator $\Qhat_0 = 0$ is optimal when $a\wedge b\leq 0$, while the estimator $\Qhat_2$ defined in \eqref{estimator2} is optimal when $a\wedge b>0$.  

On the contrary, for the estimation of $Q(\theta)$ in the one-sequence case, if we consider the following family of parameter spaces
\[\tilde{\Theta}(\beta, \tilde{b})  = \{\theta\in\R^n: \|\theta\|_0\leq k_n, \|\theta\|_2\leq \tilde{s}_n\},\]
where $k_n = n^\beta, 0<\beta<1$ and $\tilde{s}_n = n^{\tilde{b}}, \tilde{b}\in\R$, then the minimax rates of estimation exhibit similar behavior as in the case with $\ell_\infty$-norm constraint.  As before, we have
\[\inf_{\Qhat}\sup_{\theta\in \tilde{\Theta}(\beta, b)}E_\theta(\Qhat-Q(\theta))^2 \asymp \gamma_n(\beta, \tilde{b}),\]
 where
\begin{equation}
\gamma_n(\beta, \tilde{b}) = \left\{
\begin{array}{ll}
n^{4\tilde{b}-2} &\text{if }\tilde{b}\leq \frac{\beta}{2}, \\
n^{2\beta-2}(\log n)^2 &\text{if }\frac{\beta}{2} < \tilde{b} \leq \beta, \\
n^{2\tilde{b}-2} &\text{if }\tilde{b}>\beta, \\
\end{array}\right. \label{2onesamplerateS}
\end{equation}
when $0<\beta<\half$ and

\begin{equation}
\gamma_n(\beta, \tilde{b}) = \left\{
\begin{array}{ll}
n^{4\tilde{b}-2} &\text{if }\tilde{b}\leq \frac{1}{4}, \\
n^{-1} &\text{if }\frac{1}{4} < \tilde{b} \leq \half, \\
n^{2\tilde{b}-2} &\text{if }\tilde{b}>\half, \label{2onesamplerateD}
\end{array}\right.
\end{equation}
when $\half\leq\beta\leq 1$.  Letting $\tilde{b} = \frac{\beta}{2}+b$ in \eqref{2onesamplerateS} and \eqref{2onesamplerateD} yields the rate given in \eqref{onesamplerateS} and \eqref{onesamplerateD}, respectively.  The idea here is to link the $\ell_2$-norm constraint to the $\ell_\infty$-norm constraint via $\tilde{s}_n^2 = k_ns_n^2$, since then we essentially impose the same amount of constraint on the maximal value of $Q(\theta) = \frac{1}{n}\sum\theta_i^2$. 

Nevertheless, in the two-sequence case, the $\ell_2$-norm constraint translates to $Q(\mu, \theta) = \frac{1}{n}\sum\mu_i^2\theta_i^2 \leq \frac{1}{n}\tilde{r}_n^2\tilde{s}_n^2$ (equality holds if $\mu_i = \tilde{r}_n, \theta_i = \tilde{s}_n$ for some $i$ and $\mu_j = \theta_j = 0$ for all $j\neq i$), which is quite different from $Q(\mu, \theta) = \frac{1}{n}\sum\mu_i^2\theta_i^2 \leq \frac{1}{n}q_nr_n^2s_n^2$ as translated by the $\ell_\infty$-norm constraint.  It does not seem sensible to link the two constrains via the relationship $\tilde{r}_n^2 = j_nr_n^2$ and $\tilde{s}_n^2 = k_ns_n^2$, since these are specific to the value of $Q(\mu) = \frac{1}{n}\sum\mu_i^2$ and $Q(\theta) = \frac{1}{n}\sum\theta_i^2$, yet it is not immediately clear how the two constraints on the value of $Q(\mu, \theta)$ should be related.   

The disparity in the behavior of minimax rates of convergence between the one-sequence and two-sequence case under the $\ell_2$-norm constraint seems to suggest that the intrinsic nature of the problem of estimating quadratic functional for the two cases are somewhat different, at least for the specific family of parameter spaces that we consider.  To better spell out the distinctions and similarities between the two cases, a more refined analysis of the estimation problem under $\ell_p$-norm constraint on the signal strength for $p\in(0, \infty]$ seems necessary.  We suspect a smooth and monotone relationship between $p$ and the degeneracy of the estimation problem in both the one-sequence and the two-sequence cases, though the corresponding range of $p$ where transition occurs might be different.  In fact, it is easy to check that for the one-sequence case, estimation of quadratic functional is degenerate when $0<p\leq 1$, and is meaningful when $p\geq 2$.  On the contrary, for the two-sequence case, estimation of quadratic functional is degenerate when $0<p\leq 2$.  An enumeration of results for all $p\in(0, \infty]$ is beyond the scope of the paper.

\section{Additional Proofs}\label{prooftwosample} 
In this section, we present the proofs of Theorem~\ref{two-inf-dense-up}, Theorem~\ref{two-inf-dense}, and Theorem~\ref{two-det-Q}.  Proofs of lower bounds are given in Section~\ref{two-lower}, followed by proofs of upper bounds in Section~\ref{two-upper}.

% Two-sequence, lower bounds:
\subsection{Proofs of Lower Bounds}\label{two-lower}
We first prove Theorem~\ref{two-inf-dense}, which constitute the lower bound for the estimation rate of $Q(\mu, \theta)$ in the dense regime.  We then prove the minimax lower bound for the hypothesis testing problem considered in Theorem~\ref{two-det-Q}, to characterize the undetectable region.  We begin with some technical tools for establishing lower bounds.

\subsubsection{General Tools}\label{tools}
Let $\mP$ be a set of probability measures on a measurable space $(\mX, \mA)$, and let $\theta: \mP\longrightarrow\R$.  For $P_f, P_g\in\mP$, let $\theta_f = \theta(P_f), \theta_g = \theta(P_g)$, and let $f, g$ denote the density of $P_f, P_g$ with respect to some dominating measure $u$.  The chi-square affinity between $P_f$ and $P_g$ is defined as
\[\xi = \xi(P_f, P_g) = \int\frac{g^2}{f}\ du.\]
In particular, for Gaussian distributions, we have
\[\xi(N(\theta_0, \sigma^2), N(\theta_1, \sigma^2)) = e^{(\theta_1-\theta_0)^2/\sigma^2}.\]

Throughout, the proof of lower bounds is established by constructing two priors which have small chi-square distance but a large difference in the expected values of the resulting quadratic functionals, and then applying the Constrained Risk Inequality (CRI) in \cite{BrownLow1996}.  Essentially, CRI says that if $P_f$ and $P_g$ are such that $\theta_f, \theta_g\in\Theta$, the parameter space of estimation, with $\xi = \xi(P_f, P_g)<\infty$, then for any estimator $\delta$ of $\theta = \theta(P)\in\Theta$ based on the random variable $X$ with distribution $P$, we have
\[\sup_{\theta\in\Theta} E_\theta(\delta(X)-\theta)^2 \geq \frac{(\theta_g-\theta_f)^2}{(1+\xi^{1/2})^2}.\]
It follows that to establish lower bound for estimation rate, it suffices to find $P_f$ and $P_g$ such that $(\theta_g-\theta_f)^2$ is as large as possible subject to $\xi(P_f, P_g)<\infty$.

Hereafter, we omit the subscripts $n$ in $k_n, q_n, s_n$ and $\tau_n$ that signifies their dependence on the sample size.  We denote by $\psi_\mu$ the density of a Gaussian distribution with mean $\mu$ and variance $\sigma^2$, and we denote by $\ell(n, k)$ the class of all subsets of $\{1, \ldots, n\}$ of $k$ distinct elements.  Also, we remind the readers that
\[R^*(n, \Omega(\beta, \epsilon, b)) = \inf_{\Qhat}\sup_{(\mu, \theta)\in\Omega(\beta, \epsilon, b)}E_{(\mu, \theta)}(\Qhat-Q(\mu, \theta))^2.\]
Finally, $c$ and $C$ denote constants that may vary for each occurrence.

\subsubsection{Proof of Theorem~\ref{two-inf-dense}}\label{estproof}
To prove Theorem~\ref{two-inf-dense}, it is sufficient to show that for $0<\beta<\half$,
\begin{equation*}
\gamma_n(\beta, \epsilon, b) \geq \left\{
\begin{array}{llll}
n^{2\epsilon+8b-2} &\text{if }b\leq 0, &\text{for }0<\epsilon\leq\beta, & \qquad\qquad(\text{Case } 2) \\
n^{\epsilon+6b-2} &\text{if }b>0, &\text{for } 0<\epsilon\leq\beta, & \qquad\qquad(\text{Case } 3) \\
n^{\beta+4b-2} &\text{if }b>0, &\text{for }\frac{\beta}{2}\leq\epsilon\leq\beta, & \qquad\qquad(\text{Case } 4) \\
n^{2\epsilon-2}(\log n)^4 &\text{if }b>0, &\text{for }0<\epsilon\leq\beta. & \qquad\qquad(\text{Case } 5) \\
\end{array}\right.
\end{equation*}
The proof of Case 2 and Case 3 can be found in Section~\ref{sparse-lower-proof}, hence we will only provide proofs of Case 4 and Case 5 below.  For individual regions in $\{(\beta, \epsilon, b): \frac{\beta}{2}\leq\epsilon\leq\beta<\half, b\in\R\}$, the minimax rate of convergence is obtained as the sharpest rate among all cases in which the region belongs to.  For instance, the region $\{(\beta, \epsilon, b): \frac{3\beta}{4}<\epsilon\leq\beta<\half, b>\frac{\epsilon}{6}\}$ is included in Case 3, Case 4 and Case 5, hence $\gamma_n(\beta, \epsilon, b) \geq \max\{n^{\epsilon+6b-2}, n^{\beta+4b-2}, n^{2\epsilon-2}(\log n)^4\} = n^{\epsilon+6b-2}$.

%Two-sequence:  LB Case 4
\begin{proof}[Proof of Case 4]
The proof of Case 4 is very similar to the proof of Case 1, besides that a slightly different mixture prior $g$ is employed.  Let
\[f(x_1, \ldots, x_n, y_1, \ldots, y_n) = \prod_{i=1}^k \psi_s(x_i)\prod_{i=k+1}^n\psi_0(x_i)\prod_{i=1}^n \psi_0(y_i).\]
For $I\in\ell(k, q)$, let
\begin{align*}
&g_I(x_1, \ldots, x_n, y_1, \ldots, y_n) \\
&= \prod_{i=1}^k\psi_s(x_i)\prod_{i=k+1}^n\psi_0(x_i)\prod_{i=1}^k\bigg[\half\psi_{\theta_i}(y_i)+\half\psi_{-\theta_i}(y_i)\bigg]\prod_{i=k+1}^n \psi_0(y_i),
\end{align*}
where $\theta_i = \rho\1(i\in I)$ with $\rho>0$, and let
\[g = \frac{1}{\binom{k}{q}}\sum_{I\in\ell(k, q)} g_I.\]
Note that in constructing $g$, mixing is done not only over all possible subsets $\ell(k, q)$ but also over the signs of $\theta_i$'s.  This has largely to do with the intuition that when signal is abundant, uncertainty about the signs of $\theta_i$'s further increase the difficulty of the estimation problem.  That being said, mixing without sign flips (i.e., simply use the priors $f$ and $g$ as given in the proof of Case 1) does not give us the tightest lower bound.  Similar to Case 1, keeping $\mu = (s, \ldots, s, 0, \ldots, 0)$ the same in both $f$ and $g$ essentially reduces the two-sequence problem to a one-sequence problem.  Our choice of priors is equivalent to having only one Gaussian mean sequence of length $k$ with $q$ nonzero entries --- thus the correspondence between the dense regime in the two-sequence case ($q\geq\sqrt{k}$) and the dense regime in the one-sequence case ($k\geq\sqrt{n}$). 

Again, the chi-square affinity between $f$ and $g$ has the form \eqref{mixprior}, where for $I, J\in\ell(k, q)$ with $m = \text{Card}(I\cap J)$, 
\begin{align*}
\int\frac{g_Ig_J}{f} &= \prod_{i=1}^k\int \frac{[\half\psi_{\rho\1(i\in I)}(y_i) + \half\psi_{-\rho\1(i\in I)}(y_i)][\half\psi_{\rho\1(i\in J)}(y_i) + \half\psi_{-\rho\1(i\in J)}(y_i)]}{\psi_0(y_i)}\ dy_i \\
&= \prod_{i=1}^k \int \frac{1}{4}\bigg\{\frac{\psi_{\rho\1(i\in I)}(y_i)\psi_{\rho\1(i\in J)}(y_i)}{\psi_0(y_i)} + \frac{\psi_{-\rho\1(i\in I)}(y_i)\psi_{-\rho\1(i\in J)}(y_i)}{\psi_0(y_i)} \\
&\qquad\qquad\qquad + \frac{\psi_{\rho\1(i\in I)}(y_i)\psi_{-\rho\1(i\in J)}(y_i)}{\psi_0(y_i)} + \frac{\psi_{-\rho\1(i\in I)}(y_i)\psi_{\rho\1(i\in J)}(y_i)}{\psi_0(y_i)}\bigg\}\ dy_i \\
&= \prod_{i\in I\cap J} \frac{1}{4}\bigg[\int\frac{\psi_\rho^2(y_i)}{\psi_0(y_i)} + \int\frac{\psi_{-\rho}^2(y_i)}{\psi_0(y_i)} + 2\int\frac{\psi_\rho(y_i)\psi_{-\rho}(y_i)}{\psi_0(y_i)}\bigg]\prod_{i\in I^c\cup J^c}1 \\
&= \prod_{i\in I\cap J}\frac{1}{2}\big[\exp(\rho^2/\sigma^2) + \exp(-\rho^2/\sigma^2)\big] \\
&= \cosh(\rho^2/\sigma^2)^m.
\end{align*}
It follows that
\[\int\frac{g^2}{f} = E[\cosh(\rho^2/\sigma^2)^M],\]
where $M$ follows hypergeometric distribution as in \eqref{hyperprob}.  Since $M$ coincides in distribution with the conditional expectation $E(\tilde{M}|\mathcal{B})$ where $\tilde{M}$ is a Binomial($q$, $\frac{q}{k}$) random variable and $\mathcal{B}$ is a suitable $\sigma$-algebra \citep{Aldous1983}, with Jensen's inequality, we get
\[\int\frac{g^2}{f} \leq E[\cosh(\rho^2/\sigma^2)^{\tilde{M}}] = \bigg(1+\frac{q}{k}[\cosh(\rho^2/\sigma^2)-1]\bigg)^q.\]
Since $\cosh(x) = \frac{1}{2}(e^x+e^{-x}) = 1+\frac{x^2}{2} + o(x^2)$ when $x\approx 0$, taking $x = \rho^2/\sigma^2$ with $\rho = (\frac{k}{q^2})^{1/4}$ yields
\[\int\frac{g^2}{f} \leq \bigg(1+\frac{1}{2\sigma^4q}\bigg)^q < \infty.\]
Since $Q(\mu, \theta) = 0$ under $f$ and $Q(\mu, \theta) = \frac{1}{n}qs^2\rho^2$ under $g$, it follows from CRI that 
\[R^*(n, \Omega(\beta, \epsilon, b)) \geq c\bigg(\frac{1}{n}qs^2\rho^2\bigg)^2 = cn^{\beta+4b-2}.\]
\end{proof}

%Two-sequence:  LB Case 5
\begin{proof}[Proof of Case 5]
Let $f$ and $g$ be as given in the proof of Case 2 in Section~\ref{sparse-lower-proof}, and take $\rho = \sigma\sqrt{\half(1-2\epsilon)\log n}$ in \eqref{letrho}. 
It follows that when $n$ is sufficiently large, 
\[e^{2\rho^2/\sigma^2} = n^{1-2\epsilon} = \frac{n}{q^2},\]
hence
\[\int\frac{g^2}{f} \leq \bigg(1+\frac{1}{q}\bigg)^q \leq e.\]
Since $Q(\mu, \theta) = 0$ under $f$, and $Q(\mu, \theta) = \frac{1}{n}q\rho^4$ under $g$, it follows from CRI that
\begin{equation*}
R^*(n, \Omega(\beta, \epsilon, b)) \geq c\bigg(\frac{1}{n}q\rho^4\bigg)^2 = cn^{2\epsilon-2}(\log n)^4.
\end{equation*}
\end{proof}

%Two-sequence:  Detection
\subsubsection{Proof of Lower Bound in Theorem~\ref{two-det-Q}}\label{det-low-proof}
Consider testing between
\[H_0: \theta\in\Theta_0(n), \qquad H_1: \theta\in\Theta_1(n).\]
Let $f$ be the density of a prior supported on $\Theta_0(n)$, and let $g$ be the density of a prior supported on $\Theta_1(n)$.  Using the relationship
\[\inf_\psi\Big[\sup_{\theta\in\Theta_0(n)}E_\theta(\psi) +\sup_{\theta\in\Theta_1(n)}E_\theta(1-\psi)\Big] \geq \inf_\psi[E_f(\psi)+E_g(1-\psi)] = 1 - \half\int |g-f|, \]
and
\[\int |g-f| = \int\frac{|g-f|}{f}f \leq \bigg(\int\frac{(g-f)^2}{f^2}f\bigg)^{1/2} \leq\bigg(\int\frac{g^2}{f}-1\bigg)^{1/2},\]
it follows that to show
\[\inf_\psi\Big[\sup_{\theta\in\Theta_0(n)}E_\theta(\psi) +\sup_{\theta\in\Theta_1(n)}E_\theta(1-\psi)\Big] \longrightarrow 1,\]
it suffices to establish that $\int\frac{g^2}{f}\longrightarrow 1$.  

Below, we will use this idea to establish minimax lower bound for characterizing the undetectable region in the hypothesis testing problem \eqref{twoTestingA}.  We divide the proof of lower bound in Theorem~\ref{two-det-Q} into two cases: the sparse regime where $0<\epsilon<\frac{\beta}{2}$, and the dense regime where $\frac{\beta}{2}\leq\epsilon\leq\beta$.

\begin{proof}[Proof for sparse regime]
Suppose that $0<\epsilon<\frac{\beta}{2}$ and $a\wedge b\leq 0$.  We will show that there exists $f$ and $g$ supported on $\Omega_0(\beta, a, b)$ and $\Omega_1(\beta, \epsilon, a, b)$, respectively, such that $\int\frac{g^2}{f}\longrightarrow 1$.  

Without loss of generality, assume $a\geq b$ with $b\leq 0$.  Let 
\[f(x_1, \ldots, x_n, y_1, \ldots, y_n) = \prod_{i=1}^k \psi_r(x_i)\prod_{i=k+1}^n\psi_0(x_i)\prod_{i=1}^n \psi_0(y_i).\]
For $I\in\ell(k, q)$, let
\[g_I(x_1, \ldots, x_n, y_1, \ldots, y_n) = \prod_{i=1}^k \psi_r(x_i)\prod_{i=k+1}^n\psi_0(x_i)\prod_{i=1}^k\psi_{\theta_i}(y_i)\prod_{i=k+1}^n \psi_0(y_i),\]
where $\theta_i = s\1(i\in I)$, and let
\[g = \frac{1}{\binom{k}{q}}\sum_{I\in\ell(k, q)} g_I.\]
The calculation from the proof of Case 1 in Section~\ref{sparse-lower-proof} shows that
\[\int\frac{g^2}{f} \leq \bigg(1 - \frac{q}{k} + \frac{q}{k}e^{s^2/\sigma^2}\bigg)^q. \]
Since $s=n^b$ with $b\leq 0$, it follows that for $n$ sufficiently large, 
\[\int\frac{g^2}{f} \leq \bigg(1 + \frac{q}{k}e^{n^{2b}/\sigma^2}\bigg)^q \longrightarrow 1.\]
\end{proof}

\begin{proof}[Proof for dense regime]
Suppose that $\frac{\beta}{2}\leq\epsilon\leq\beta$, and either (1) $a\wedge b<\frac{\beta-2\epsilon}{4}$ or (2) $a\vee b\leq 0$.  We will show that in either scenario, there exists $f$ and $g$ supported on $\Omega_0(\beta, a, b)$ and $\Omega_1(\beta, \epsilon, a, b)$, respectively, such that $\int\frac{g^2}{f}\longrightarrow 1$. 

\begin{enumerate}[(1)]
\item Suppose that $a\wedge b<\frac{\beta-2\epsilon}{4}$.  Without loss of generality, assume $a\geq b$ with $b<\frac{\beta-2\epsilon}{4}$.  Let
\[f(x_1, \ldots, x_n, y_1, \ldots, y_n) = \prod_{i=1}^k \psi_r(x_i)\prod_{i=k+1}^n\psi_0(x_i)\prod_{i=1}^n \psi_0(y_i).\]
For $I\in\ell(k, q)$, let
\begin{align*}
&g_I(x_1, \ldots, x_n, y_1, \ldots, y_n) \\
&= \prod_{i=1}^k\psi_r(x_i)\prod_{i=k+1}^n\psi_0(x_i)\prod_{i=1}^k\bigg[\half\psi_{\theta_i}(y_i)+\half\psi_{-\theta_i}(y_i)\bigg]\prod_{i=k+1}^n \psi_0(y_i),
\end{align*}
where $\theta_i = s\1(i\in I)$, and let
\[g = \frac{1}{\binom{k}{q}}\sum_{I\in\ell(k, q)} g_I.\]
The calculation from the proof of Case 4 in Section~\ref{estproof} shows that
\[\int\frac{g^2}{f} \leq \bigg(1+\frac{q}{k}[\cosh(s^2/\sigma^2)-1]\bigg)^q.\]
Since $\cosh(x) = \frac{1}{2}(e^x+e^{-x}) = 1+\frac{x^2}{2}+o(x^2)$ when $x\approx 0$, plug in $x = s^2/\sigma^2$, it follows that for $s=n^b$ with $b<\frac{\beta-2\epsilon}{4}$, 
\[\int\frac{g^2}{f} \leq \bigg(1+\frac{q}{k}\frac{s^4}{2\sigma^4}\bigg)^q = \bigg(1+\frac{1}{2\sigma^4}n^{4b+\epsilon-\beta}\bigg)^{n^\epsilon} \longrightarrow 1.\]

\item  Suppose that $a\vee b\leq 0$.  Let 
\[f(x_1, \ldots, x_n, y_1, \ldots, y_n) = \prod_{i=1}^n \psi_0(x_i)\prod_{i=1}^n \psi_0(y_i)\]
For $I\in\ell(n, q)$, let
\[g_I(x_1, \ldots, x_n, y_1, \ldots, y_n) = \prod_{i=1}^n\psi_{\mu_i}(x_i)\prod_{i=1}^n\psi_{\theta_i}(y_i),\]
where $\mu_i = r\1(i\in I)$, $\theta_i = s\1(i\in I)$, and let
\[g = \frac{1}{\binom{n}{q}}\sum_{I\in\ell(n, q)} g_I.\]
Similar calculation as in the proof of Case 2 in Section~\ref{sparse-lower-proof} shows that
\[\int\frac{g^2}{f} \leq \bigg(1 - \frac{q}{n} + \frac{q}{n}e^{(r^2+s^2)/\sigma^2}\bigg)^q.\] 
Since $a\vee b\leq 0$, for $r = n^a$ and $s = n^b$, we have
\[\int\frac{g^2}{f} \leq \bigg(1 - \frac{q}{n} + \frac{q}{n}e^{(n^{2a}+n^{2b})/\sigma^2}\bigg)^q\longrightarrow 1.\] 
\end{enumerate}
\end{proof}

% Two-sequence, upper bounds:
\subsection{Proofs of Upper Bounds} \label{two-upper}
In this section, we provide proof of Theorem~\ref{two-inf-dense-up} and proof of upper bound in Theorem~\ref{two-det-Q}.  To prove Thorem~\ref{two-inf-dense-up}, we compute upper bound for the the supremum risk of the estimator $\Qhat_4$ (defined in \eqref{estimator4}) over the the parameter space $\Omega(\beta, \epsilon, b)$ introduced in \eqref{twoSpace}.  We will see that it matches the lower bound derived in Section~\ref{two-lower} when $b>0$.  We then prove the upper bound in Theorem~\ref{two-det-Q}, to show that the proposed tests asymptotically separate the null hypothesis from the alternative hypothesis over the detectable regions.  In the rest, we denote by $\phi(z), \Phi(z) = P(Z\leq z)$, and $\tPhi(z) = 1-\Phi(z)$ the density, the cumulative distribution function, and the survival function of a standard normal random variable $Z$, respectively. 

%Two-sequence, 0: UB
It is trivial to see that for $\Qhat_0 = 0$,
\begin{align}
\sup_{(\mu, \theta)\in \Omega(\beta, \epsilon, b)} E_{(\mu, \theta)}(\Qhat-Q(\mu, \theta))^2 &= \sup_{(\mu, \theta)\in \Omega(\beta, \epsilon, b)}\bigg(\frac{1}{n}\sum_{i=1}^n\mu_i^2\theta_i^2\bigg)^2 \nonumber\\
&= q^2s^8n^{-2} = n^{2\epsilon+8b-2}. \label{Q0rateTwo}
\end{align}

%Two-sequence, dense: UB
\subsubsection{Proof of Theorem~\ref{two-inf-dense-up}}\label{dense-upper-proof}
The proof of Theorem~\ref{two-inf-dense-up} involves a careful analysis of the bias and variance of the proposed estimator $\Qhat_4$.  We will use the following two lemmas.
% Lemma
\begin{Lemma}\label{biasestimator4}
Let $X\sim N(\mu, \sigma^2), Y\sim N(\theta, \sigma^2)$.  Set $\eta = E_{(0, 0)}[(X^2-\sigma^2)(Y^2-\sigma^2)\1(X^2\vee Y^2>\sigma^2\tau)]$, where the expectation is taken under $\mu=\theta=0$.  Then
\[\eta = -4\sigma^4\tau\phi^2(\tau^{1/2}),\]
and for $\tau\geq 1$,
\begin{align*}
&\big|E[(X^2-\sigma^2)(Y^2-\sigma^2)\1(X^2\vee Y^2>\sigma^2\tau)]-\eta-\mu^2\theta^2\big| \\
&\leq \min\{\mu^2, 3\sigma^2\tau\}\min\{\theta^2, 3\sigma^2\tau\} + 2\sigma^2\tau^{1/2}\phi(\tau^{1/2})\min\{\mu^2, 3\sigma^2\tau\} \\
&\qquad+ 2\sigma^2\tau^{1/2}\phi(\tau^{1/2})\min\{\theta^2, 3\sigma^2\tau\}.
\end{align*}
\end{Lemma}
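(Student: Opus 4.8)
The plan is to compute the bias correction $\eta$ exactly under $\mu=\theta=0$, and then bound the bias of the thresholded product estimator by a sequence of term-by-term reductions. For the exact value of $\eta$, I would write the indicator as $\1(X^2\vee Y^2>\sigma^2\tau) = 1 - \1(X^2\le\sigma^2\tau)\1(Y^2\le\sigma^2\tau)$ and use the fact that under $\mu=\theta=0$ the variables $X,Y$ are independent $N(0,\sigma^2)$. Since $E_0[(X^2-\sigma^2)(Y^2-\sigma^2)] = E_0(X^2-\sigma^2)\,E_0(Y^2-\sigma^2) = 0$, we get $\eta = -E_0[(X^2-\sigma^2)\1(X^2\le\sigma^2\tau)]\cdot E_0[(Y^2-\sigma^2)\1(Y^2\le\sigma^2\tau)] = -\big(E_0[(X^2-\sigma^2)\1(X^2\le\sigma^2\tau)]\big)^2$. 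A single integration by parts (or the standard identity $\int_{-z}^{z} u^2\phi(u)\,du = \Phi(z)-\Phi(-z) - 2z\phi(z)$, combined with $\int_{-z}^z\phi(u)\,du = \Phi(z)-\Phi(-z)$) with $z=\tau^{1/2}$ gives $E_0[(X^2-\sigma^2)\1(X^2\le\sigma^2\tau)] = \sigma^2\big(\Phi(\tau^{1/2})-\Phi(-\tau^{1/2}) - 2\tau^{1/2}\phi(\tau^{1/2})\big) - \sigma^2\big(\Phi(\tau^{1/2})-\Phi(-\tau^{1/2})\big) = -2\sigma^2\tau^{1/2}\phi(\tau^{1/2})$, hence $\eta = -4\sigma^4\tau\phi^2(\tau^{1/2})$.

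For the bias bound, I would again use the complementary-indicator decomposition to write
\[
E[(X^2-\sigma^2)(Y^2-\sigma^2)\1(X^2\vee Y^2>\sigma^2\tau)] = E[(X^2-\sigma^2)(Y^2-\sigma^2)] - E[(X^2-\sigma^2)\1(X^2\le\sigma^2\tau)]\,E[(Y^2-\sigma^2)\1(Y^2\le\sigma^2\tau)],
\]
where the first term equals $\mu^2\theta^2$ by independence and the elementary moment identities $E(X^2-\sigma^2)=\mu^2$, $E(Y^2-\sigma^2)=\theta^2$. So the quantity to control is $\big|\eta - g(\mu)g(\theta)\big|$ where $g(\mu) := E_{\mu}[(X^2-\sigma^2)\1(X^2\le\sigma^2\tau)]$, and $\eta = g(0)^2$. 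Adding and subtracting $g(0)g(\theta)$ and then $g(0)g(0)$, the triangle inequality bounds this by $|g(\mu)-g(0)|\,|g(\theta)| + |g(0)|\,|g(\theta)-g(0)| + \big(\text{a cross term that telescopes}\big)$; more cleanly, $\eta - g(\mu)g(\theta) = (g(0)-g(\mu))g(0) + g(\mu)(g(0)-g(\theta))$, giving $|\eta-g(\mu)g(\theta)| \le |g(0)-g(\mu)|\,|g(0)| + |g(\mu)|\,|g(0)-g(\theta)|$. The reinterpretation I actually want matches the target form: bound $|g(\mu)| \le \min\{\mu^2,\,3\sigma^2\tau\}$ plus the pure-noise piece, and bound $|g(\mu)-g(0)|$ by $\min\{\mu^2, 3\sigma^2\tau\}$ too, since $g(\mu)-g(0)$ is the difference in the "mass retained below threshold" which is at most $\mu^2$ (the total signal contribution) and also at most $O(\sigma^2\tau)$ (it is an integral of $|u^2-\sigma^2|$, bounded by $\sigma^2\tau$ in magnitude, against a shifted-minus-centered Gaussian density whose $L_1$ difference is controlled). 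The term $|g(0)| = 2\sigma^2\tau^{1/2}\phi(\tau^{1/2})$ is exactly the coefficient appearing in the stated bound, which is what produces the two summands $2\sigma^2\tau^{1/2}\phi(\tau^{1/2})\min\{\mu^2,3\sigma^2\tau\}$ and $2\sigma^2\tau^{1/2}\phi(\tau^{1/2})\min\{\theta^2,3\sigma^2\tau\}$, and the product $|g(\mu)-g(0)|\,|g(\theta)-g(0)|$-type term yields $\min\{\mu^2,3\sigma^2\tau\}\min\{\theta^2,3\sigma^2\tau\}$.

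The main obstacle is the careful justification of the two bounds $|g(\mu)| \le \min\{\mu^2,3\sigma^2\tau\} + 2\sigma^2\tau^{1/2}\phi(\tau^{1/2})$ and $|g(\mu)-g(0)| \le \min\{\mu^2,3\sigma^2\tau\}$ with the exact numerical constant $3$. The $\mu^2$ side follows from $\big|E_\mu(X^2-\sigma^2)\1(X^2\le\sigma^2\tau) - E_\mu(X^2-\sigma^2)\1(\text{all})\big| \le E_\mu|X^2-\sigma^2|\1(X^2>\sigma^2\tau)$ together with the trivial bound on the retained part, but extracting a clean constant requires splitting $X^2 - \sigma^2 = (X-\mu)^2 - \sigma^2 + 2\mu(X-\mu) + \mu^2$ and estimating the three resulting truncated Gaussian moments; this is where the constant $3\sigma^2\tau$ (rather than $2\sigma^2\tau$ as in Lemma~\ref{qbound2} for the one-sequence case) comes from, because the cross term $2\mu(X-\mu)$ contributes an extra $\sigma^2\tau^{1/2}$-order piece that must be absorbed. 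For $\tau\ge1$ one checks $\sigma^2\tau^{1/2} \le \sigma^2\tau$, so this can be folded in. I would organize the computation using the truncated-moment identities for $N(\mu,\sigma^2)$ restricted to $\{|X|\le\sigma\tau^{1/2}\}$, keeping $\tau\ge 1$ throughout so that tail quantities like $\phi(\tau^{1/2})$, $\tau^{1/2}\phi(\tau^{1/2})$ dominate the relevant error terms; the bookkeeping is routine but the constant-tracking is the only delicate part.
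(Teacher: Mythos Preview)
Your overall strategy is the same as the paper's: pass to the complementary indicator so that the bias becomes $|g(\mu)g(\theta)-g(0)^2|$ with $g(\mu)=E_\mu[(X^2-\sigma^2)\1(X^2\le\sigma^2\tau)]$, compute $g(0)=-2\sigma^2\tau^{1/2}\phi(\tau^{1/2})$ explicitly, and then apply a product--difference decomposition. (Minor slip: you write $\eta=g(0)^2$, but in fact $\eta=-g(0)^2$; the quantity $|g(0)^2-g(\mu)g(\theta)|$ you end up bounding is nonetheless the correct one.) Your asymmetric telescoping and the paper's symmetric $AB-ab=(A-a)(B-b)+a(B-b)+b(A-a)$ both reduce the task to the single-variable bound $|g(\mu)-g(0)|\le\min\{\mu^2,3\sigma^2\tau\}$ together with the exact value $|g(0)|=2\sigma^2\tau^{1/2}\phi(\tau^{1/2})$.

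The substantive difference is in how this single-variable bound is obtained. The paper first computes $g(\theta)$ \emph{exactly} for arbitrary $\theta$ (its Lemma~\ref{e0}), then sets $B(\theta)=g(\theta)-g(0)$ and differentiates: from the explicit formula one reads off $B'(\theta)\le 2\theta$ for $\theta\ge0$, whence $B(\theta)\le\theta^2$ since $B(0)=0$; the lower bound $B(\theta)\ge-\theta^2$ is handled by a short case split $\theta\ge\sigma$ versus $0\le\theta<\sigma$ (Lemma~\ref{e1}). The $3\sigma^2\tau$ side is immediate from the crude range bounds $-\sigma^2\le g(\theta)\le\sigma^2\tau$ plus $|g(0)|\le2\sigma^2\tau$. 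By contrast, your proposed route---split $X^2-\sigma^2=(X-\mu)^2-\sigma^2+2\mu(X-\mu)+\mu^2$ and estimate the three truncated moments---does not obviously close: the cross term and the $(X-\mu)^2-\sigma^2$ term produce contributions of the form $\mu\sigma[\phi(\sqrt\tau+\mu/\sigma)-\phi(\sqrt\tau-\mu/\sigma)]$ and shifted-threshold differences that depend jointly on $\mu$ and $\tau$, and absorbing those into a bare $\mu^2$ with no factor of $\tau$ or $\sigma$ requires more than what you sketch. The derivative argument on the closed-form $g(\theta)$ is what makes the constant clean; I would switch to that for the key single-variable step.
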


% Lemma
\begin{Lemma}\label{varestimator4}
Let $X\sim N(\mu, \sigma^2), Y\sim N(\theta, \sigma^2)$.  Then for $\tau\geq 1$,
\begin{align*}
&\Var[(X^2-\sigma^2)(Y^2-\sigma^2)\1(X^2\vee Y^2>\sigma^2\tau)] \\
&\leq \left\{
\begin{array}{ll}
2d^{1/2}\tPhi(\tau^{1/2})^{1/2} &\text{if }\mu = \theta = 0, \\
4\sigma^2\mu^4\theta^2 + 4\sigma^2\mu^2\theta^4 + 16\sigma^4\mu^2\theta^2 + 2\sigma^4\mu^4 + 2\sigma^4\theta^4 \\
\qquad + 8\sigma^6\mu^2 + 8\sigma^6\theta^2 + 4\sigma^8 + 8\sigma^4\mu^2\theta^2\tau^2 &\text{otherwise}, 
\end{array}\right.
\end{align*}
where $d = E_{(0, 0)}[(X^2-\sigma^2)^4(Y^2-\sigma^2)^4]$.
\end{Lemma}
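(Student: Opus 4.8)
The plan is to bound $\Var(W)$, where $W := (X^2-\sigma^2)(Y^2-\sigma^2)\1(X^2\vee Y^2>\sigma^2\tau)$, by treating the two cases separately. Throughout I would write $U_X = X^2-\sigma^2$ and $U_Y = Y^2-\sigma^2$, which are independent since $X$ and $Y$ are, and set $A = \{X^2>\sigma^2\tau\}\cup\{Y^2>\sigma^2\tau\}$.

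For the case $\mu=\theta=0$ the bound follows from $\Var(W)\le E(W^2)$ together with Cauchy--Schwarz. Since $\1(A)^2=\1(A)$,
\[
E(W^2) = E\big[U_X^2U_Y^2\1(A)\big] \le \big(E[U_X^4U_Y^4]\big)^{1/2}\big(P(A)\big)^{1/2} = d^{1/2}\,P(A)^{1/2},
\]
recognizing $E_{(0,0)}[U_X^4U_Y^4]=d$. Under $\mu=\theta=0$ we have $X/\sigma,Y/\sigma\sim N(0,1)$, so a union bound gives $P(A)\le P(X^2>\sigma^2\tau)+P(Y^2>\sigma^2\tau)=4\tPhi(\tau^{1/2})$, whence $\Var(W)\le 2d^{1/2}\tPhi(\tau^{1/2})^{1/2}$, as claimed.

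For the general case the second-moment bound is too crude, since $E(W^2)$ carries a $\mu^4\theta^4$ term that must cancel against $(EW)^2$; I would therefore compute the variance exactly using $\1(A)=1-\1(X^2\le\sigma^2\tau)\1(Y^2\le\sigma^2\tau)$. Setting $R=U_X\1(X^2\le\sigma^2\tau)$ and $S=U_Y\1(Y^2\le\sigma^2\tau)$, functions of $X$ and $Y$ respectively, this yields $W=U_XU_Y-RS$ and hence
\[
\Var(W) = \Var(U_XU_Y) + \Var(RS) - 2\,\mathrm{Cov}(U_XU_Y,RS).
\]
The first term is evaluated in closed form: with $E(U_X)=\mu^2$ and $\Var(U_X)=4\mu^2\sigma^2+2\sigma^4$ (and likewise for $Y$), the product-variance identity $\Var(AB)=\Var(A)\Var(B)+(EA)^2\Var(B)+(EB)^2\Var(A)$ for independent factors shows that $\Var(U_XU_Y)$ equals precisely the eight $\tau$-free summands appearing in the stated bound.

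The crux is the correction $\Var(RS)-2\,\mathrm{Cov}(U_XU_Y,RS)$. Using independence of $X,Y$ and the identity $E(U_XR)=E(R^2)$ (because $R^2=U_X^2\1(X^2\le\sigma^2\tau)$), I would show the correction collapses to
\[
-E(R^2)E(S^2) - (ER)^2(ES)^2 + 2\mu^2\theta^2(ER)(ES),
\]
whose first two summands are non-positive. The main obstacle, and the only place $\tau$ enters, is the surviving term $2\mu^2\theta^2(ER)(ES)$: here I would use that on $\{X^2\le\sigma^2\tau\}$ one has $|U_X|=|X^2-\sigma^2|\le\sigma^2\tau$ for $\tau\ge1$, so $|ER|\le E|R|\le\sigma^2\tau$ and likewise $|ES|\le\sigma^2\tau$, giving a correction of at most $2\sigma^4\mu^2\theta^2\tau^2\le 8\sigma^4\mu^2\theta^2\tau^2$. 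Adding this to $\Var(U_XU_Y)$ produces the claimed bound. The delicate point is verifying the exact cancellation that reduces the correction to these three terms, rather than bounding $\Var(RS)$ and the covariance separately (which would reintroduce the large $\mu^4\theta^4$-type contributions); the boundedness of $R$ and $S$ then closes the estimate cleanly.
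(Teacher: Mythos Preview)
Your proposal is correct and essentially mirrors the paper's proof. The paper also writes $\1(A)=1-\1(X^2\le\sigma^2\tau)\1(Y^2\le\sigma^2\tau)$ and, after expanding $E(W^2)-(EW)^2$, arrives at exactly your reduced correction $-E(R^2)E(S^2)-(ER)^2(ES)^2+2\mu^2\theta^2(ER)(ES)$, drops the two non-positive terms, and bounds the last one by a crude $|ER|,|ES|\le 2\sigma^2\tau$ (hence the constant $8$); your $|ER|\le\sigma^2\tau$ is slightly sharper but otherwise identical, and your union-bound treatment of $P(A)$ in the $\mu=\theta=0$ case gives the same $2d^{1/2}\tPhi(\tau^{1/2})^{1/2}$ as the paper's $1-p^2\le 2(1-p)$ step.
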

For clarity, we relegate the proofs of Lemma~\ref{biasestimator4} and Lemma~\ref{varestimator4} to the end of Section~\ref{two-upper}.

\begin{proof}[Proof of Theorem~\ref{two-inf-dense-up}]
We first compute the bias of the estimator $\Qhat_4$ defined in \eqref{estimator4}.  It follows from Lemma~\ref{biasestimator4} that for all $(\mu, \theta)\in \Omega(\beta, \epsilon, b)$ and $\tau\geq 1$, we have
\begin{align*}
&\big|E_{(\mu, \theta)}(\Qhat_4) - Q(\mu, \theta)\big| \\
&\leq \frac{1}{n}\sum_{i=1}^n\Big|E_{(\mu_i, \theta_i)}[(X_i^2-\sigma^2)(Y_i^2-\sigma^2)\1(X_i^2\vee Y_i^2>\sigma^2\tau)]-\eta-\mu_i^2\theta_i^2\Big| \\
&\leq \frac{1}{n}\sum_{i=1}^n \Big[\min\{\mu_i^2, 3\sigma^2\tau\}\min\{\theta_i^2, 3\sigma^2\tau\} + 2\sigma^2\tau^{1/2}\phi(\tau^{1/2})\min\{\mu_i^2, 3\sigma^2\tau\} \\
&\qquad\qquad+ 2\sigma^2\tau^{1/2}\phi(\tau^{1/2})\min\{\theta_i^2, 3\sigma^2\tau\}\Big] \\
&\leq \frac{1}{n}\Big[\min\{qs^4, 3\sigma^2qs^2\tau, 9\sigma^4q\tau^2\} + 4\sigma^2\tau^{1/2}\phi(\tau^{1/2})\min\{ks^2, 3\sigma^2k\tau\}\Big],
\end{align*}
the last inequality follows from the fact that for $(\mu, \theta)\in\Omega(\beta, \epsilon, b)$, there are at most $k$ nonzero entries for either $\mu$ or $\theta$, and there are at most $q$ entries that are simultaneously nonzero for both $\mu$ and $\theta$.  

On the other hand, by Lemma~\ref{varestimator4}, for all $(\mu, \theta)\in \Omega(\beta, \epsilon, b)$ and $\tau\geq 1$, the variance of the estimator $\Qhat_4$ satisfies
\begin{align*}
&\Var_{(\mu, \theta)}(\Qhat_4) \\
&= \frac{1}{n^2}\sum_{i=1}^n\Var_{(\mu_i, \theta_i)}[(X_i^2-\sigma^2)(Y_i^2-\sigma^2)\1(X_i^2\vee Y_i^2>\sigma^2\tau)] \\
&\leq \frac{1}{n^2} \bigg[\sum_{i: \mu_i = \theta_i = 0}2d^{1/2}\tPhi(\tau^{1/2})^{1/2} \\
&\qquad\qquad + \sum_{i: \mu_i\neq 0\text{ or }\theta_i\neq 0}\big(4\sigma^2\mu_i^4\theta_i^2 + 4\sigma^2\mu_i^2\theta_i^4 + 16\sigma^4\mu_i^2\theta_i^2 + 2\sigma^4\mu_i^4 + 2\sigma^4\theta_i^4 \\
&\qquad\qquad\qquad\qquad\qquad\qquad + 8\sigma^6\mu_i^2 + 8\sigma^6\theta_i^2 + 4\sigma^8 + 8\sigma^4\mu_i^2\theta_i^2\tau^2\big) \bigg] \\
&\leq \frac{1}{n^2}\Big[2d^{1/2}n\tPhi(\tau^{1/2})^{1/2} + 8\sigma^2qs^6 + 16\sigma^4qs^4 + 4\sigma^4ks^4 + 16\sigma^6ks^2 + 8\sigma^8k + 8\sigma^4qs^4\tau^2\Big] \\
&\leq \frac{C}{n^2}\max\{n\tPhi(\tau^{1/2})^{1/2}, qs^4, qs^6, k, ks^2, ks^4, qs^4\tau^2\}.
\end{align*}
Again, the second to the last inequality follows from the fact that for $(\mu, \theta)\in\Omega(\beta, \epsilon, b)$, there are at most $k$ nonzero entries for either $\mu$ or $\theta$, and there are at most $q$ entries that are simultaneously nonzero for both $\mu$ and $\theta$.

Combining the bias and variance term, we have
\begin{align*}
&\sup_{(\mu, \theta)\in \Omega(\beta, \epsilon, b)}E_{(\mu, \theta)}(\Qhat_4-Q(\mu, \theta))^2 \\
&\leq \frac{C}{n^2}\Big[\min\{q^2s^8, q^2s^4\tau^2, q^2\tau^4\} + \tau\phi^2(\tau^{1/2})\min\{k^2s^4, k^2\tau^2\} \\
&\qquad\qquad + \max\{n\tPhi(\tau^{1/2})^{1/2}, qs^4, qs^6, k, ks^2, ks^4, qs^4\tau^2\}\Big] \\
&= \frac{C}{n^2}\Big[\min\{n^{2\epsilon+8b}, n^{2\epsilon+4b}\tau^2, n^{2\epsilon}\tau^4\} + \tau\phi^2(\tau^{1/2})\min\{n^{2\beta+4b}, n^{2\beta}\tau^2\} \\
&\qquad\qquad + \max\{n\tPhi(\tau^{1/2})^{1/2}, n^{\epsilon+4b}, n^{\epsilon+6b}, n^\beta, n^{\beta+2b}, n^{\beta+4b}, n^{\epsilon+4b}\tau^2\}\Big].
\end{align*}
Let $\tau = 4\log n$, then we have $\tPhi(\tau^{1/2}) \leq C\phi(\tau^{1/2}) = O(n^{-2})$ for some constant $C$.  It follows that for $b>0$,
\begin{align*}
\sup_{(\mu, \theta)\in \Omega(\beta, \epsilon, b)}E_{(\mu, \theta)}(\Qhat_4-Q(\mu, \theta))^2 &\leq C\max\Big\{n^{2\epsilon-2}(\log n)^4, n^{\epsilon+6b-2}, n^{\beta+4b-2}\Big\} \\
&\leq C\gamma_n(\beta, \epsilon, b),
\end{align*}
where for $\frac{\beta}{2}\leq\epsilon\leq\frac{3\beta}{4}$, 
\begin{equation}
\gamma_n(\beta, \epsilon, b) = \left\{
\begin{array}{ll}
n^{2\epsilon-2}(\log n)^4 &\text{if }0<b\leq\frac{2\epsilon-\beta}{4}, \\
n^{\beta+4b-2} &\text{if }\frac{2\epsilon-\beta}{4}<b\leq\frac{\beta-\epsilon}{2}, \\
n^{\epsilon+6b-2} &\text{if }b>\frac{\beta-\epsilon}{2}, \\
\end{array}\right. \label{Q4rated}
\end{equation}
and for $\frac{3\beta}{4}<\epsilon\leq\beta$,
\begin{equation}
\gamma_n(\beta, \epsilon, b) = \left\{
\begin{array}{ll}
n^{2\epsilon-2}(\log n)^4 &\text{if }0<b\leq\frac{\epsilon}{6}, \\
n^{\epsilon+6b-2} &\text{if }b>\frac{\epsilon}{6}. \label{Q4rateud}
\end{array}\right.
\end{equation}
Combining \eqref{Q0rateTwo}, \eqref{Q4rated} and \eqref{Q4rateud} matches the lower bounds in Theorem~\ref{two-inf-dense}.   

One can also check that when $\frac{\beta}{2}\leq\epsilon\leq\beta$ and $s = \sigma\sqrt{d\log n}$, 
\[\sup_{\substack{(\mu, \theta): \|\mu\|_0\leq k_n, \|\theta\|_0\leq k_n, \\\|\mu\|_\infty\leq \sigma\sqrt{d\log n}, \|\theta\|_\infty\leq \sigma\sqrt{d\log n}}}E_{(\mu, \theta)}(\Qhat_4-Q(\mu, \theta))^2 \leq Cn^{2\epsilon-2}(\log n)^4.\]
\end{proof}

% two-sequence: Detection
\subsubsection{Proof of Upper Bound in Theorem~\ref{two-det-Q}}
We establish an upper bound for the hypothesis testing problem \eqref{twoTestingA} by constructing tests that asymptotically separate the null hypothesis from the alternative hypothesis over the detectable regions.  This, accompanied by the lower bound derived in Section~\ref{det-low-proof} that characterizes the undetectable regions, completes the proof of Theorem~\ref{two-det-Q}.  We divide the proof into two cases: the sparse regime where $0<\epsilon<\frac{\beta}{2}$, and the dense regime where $\frac{\beta}{2}\leq\epsilon\leq\beta$.

\begin{proof}[Proof for sparse regime]
Suppose that $0<\epsilon<\frac{\beta}{2}$ and $a\wedge b> 0$.  We will show that in this case, the test $\psi^* = \1(\Qhat_2\geq\lambda_n)$, where $\Qhat_2$ is defined in \eqref{estimator2} with $\tau = \log n$ and $\lambda_n = \half n^{\epsilon+2a+2b-1}$ (or, say, $\lambda_n = n^{2a\vee b-1}$) has sum of maximal type I error and maximal type II error goes to 0 as $n$ tends to infinity. 

Following similar variance and bias calculation as in the proof of Theorem~\ref{two-inf-sparse-up} in Section~\ref{sparse-upper-proof} while allowing for different signal strengths $r=n^a$ and $s=n^b$ for $\mu$ and $\theta$, we see that under $H_0$ where $q_n = 0$,
\begin{align*}
\sup_{(\mu, \theta)\in\Omega_0(\beta, a, b)}P_{(\mu, \theta)}(\Qhat_2\geq\lambda_n) &\leq \sup_{(\mu, \theta)\in\Omega_0(\beta, a, b)}\frac{\Var_{(\mu, \theta)}(\Qhat_2)}{\lambda_n^2} \\
&\leq C\frac{n^{\beta+4a\vee b-\half}\sqrt{\log n}+\log n}{n^{2\epsilon+4a+4b}} \longrightarrow 0.
\end{align*}
On the other hand, under $H_1$, since 
\begin{align*}
E_{(\mu, \theta)}(\Qhat_2) &\geq Q(\mu, \theta) - \frac{2}{n}\sum_{i=1}^n [\mu_i^2\min\{2\sigma^2\tau, \theta_i^2\} + \theta_i^2\min\{2\sigma^2\tau, \mu_i^2\}] \\
&\geq n^{\epsilon+2a+2b-1}-Cn^{\epsilon+2a\vee b-1}\log n,
\end{align*}
we have
\begin{align*}
\sup_{(\mu, \theta)\in\Omega_1(\beta, \epsilon, a, b)} P_{(\mu, \theta)}(\Qhat_2<\lambda_n) &\leq \sup_{(\mu, \theta)\in\Omega_1(\beta, \epsilon, a, b)}\frac{\Var_{(\mu, \theta)}(\Qhat_2)}{(E_{(\mu, \theta)}(\Qhat_2)-\lambda_n)^2} \\
&\leq C\frac{n^{\epsilon+4a\vee b+2a\wedge b}}{n^{2\epsilon+4a+4b}(1+o(1))} \longrightarrow 0.
\end{align*}
\end{proof}

\begin{proof}[Proof for dense regime]
Suppose that $0<\epsilon<\frac{\beta}{2}$, $a\wedge b>\frac{\beta-2\epsilon}{4}$ and $a\vee b>0$.  We will show that in this case, the test $\psi^* = \1(\Qhat_4\geq\lambda_n)$, where $\Qhat_4$ is as defined in \eqref{estimator4} with $\tau = 4\log n$ and $\lambda_n = \half n^{\epsilon+2a+2b-1}$ has sum of maximal type I error and maximal type II error goes to 0 as $n$ tends to infinity.

We follow similar variance and bias calculation as in the proof of Theorem~\ref{two-inf-dense-up} in Section~\ref{dense-upper-proof} while allowing for different signal strengths $r=n^a$ and $s=n^b$ for $\mu$ and $\theta$.  Under $H_0$, by Lemma~\ref{biasestimator4},
\[|E_{(\mu, \theta)}(\Qhat_4)| \leq \frac{C}{n}k\tau^{3/2}\phi(\tau^{1/2}) \leq Cn^{\beta-3}(\log n)^{3/2} = o(\lambda_n),\]
so when $n$ is sufficiently large, 
\begin{align*}
\sup_{(\mu, \theta)\in\Omega_0(\beta, a, b)}P_{(\mu, \theta)}(\Qhat_4\geq\lambda_n) &\leq \sup_{(\mu, \theta)\in\Omega_0(\beta, a, b)}\frac{\Var_{(\mu, \theta)}(\Qhat_4)}{(\lambda_n-E_{(\mu, \theta)}(\Qhat_4))^2} \\
&\leq C\frac{n^{\beta+4a\vee b}}{n^{2\epsilon+4a+4b}} \longrightarrow 0.
\end{align*}
Also, under $H_1$, since 
\begin{align*}
E_{(\mu, \theta)}(\Qhat_4) &\geq Q(\mu, \theta) - \frac{1}{n}\Big[\min\{qr^2s^2, 3\sigma^2qr^2\tau, 3\sigma^2qs^2\tau, 9\sigma^4q\tau^2\} \\
&\qquad\qquad\qquad\qquad + 2\sigma^2\tau^{1/2}\phi(\tau^{1/2})\min\{kr^2, 3\sigma^2k\tau\} \\
&\qquad\qquad\qquad\qquad + 2\sigma^2\tau^{1/2}\phi(\tau^{1/2})\min\{ks^2, 3\sigma^2k\tau\}\Big] \\
&= n^{\epsilon+2a+2b-1}(1+o(1)),
\end{align*}
we have
\begin{align*}
\sup_{(\mu, \theta)\in\Omega_1(\beta, \epsilon, a, b)} P_{(\mu, \theta)}(\Qhat_4<\lambda_n) &\leq \sup_{(\mu, \theta)\in\Omega_1(\beta, \epsilon, a, b)}\frac{\Var_{(\mu, \theta)}(\Qhat_4)}{(E_{(\mu, \theta)}(\Qhat_4)-\lambda_n)^2} \\
&\leq C\frac{n^{\epsilon+4a\vee b+2a\wedge b} + n^{\beta+4a\vee b}}{n^{2\epsilon+4a+4b}(1+o(1))} \longrightarrow 0.
\end{align*}
\end{proof}

% Proof of Lemma
\subsubsection{Proof of Lemma~\ref{varboundlemma}}
Let $B(\theta) = E(Y^2-\tau\sigma^2)_+ - \theta_0$.  We first note that $B(-\theta) = B(\theta)\geq 0$ for $\theta\geq 0$.  This follows from
\begin{align*}
B'(\theta) &= 2\sigma[\phi(\tau^{1/2}-\theta/\sigma)-\phi(\tau^{1/2}+\theta/\sigma)] \\
&\qquad - 2\theta[\Phi(\tau^{1/2}-\theta/\sigma)-\Phi(-\tau^{1/2}-\theta/\sigma)-1] \\
&\geq 0
\end{align*}
and $B(0) = 0$.  So we have $B(\theta) = E(Y^2-\tau\sigma^2)_+ - \theta_0\geq 0$ for all $\theta\in\R$.  It follows that $(E[(Y^2-\tau\sigma^2)_+ - \theta_0])^2 \leq (E(Y^2-\tau\sigma^2)_+)^2 \leq E[(Y^2-\tau\sigma^2)_+^2]$.  To bound the term $E[(Y^2-\tau\sigma^2)_+^2]$,  we consider two cases: $\theta\leq \sigma$ and $\theta>\sigma$.  It follows from the proof of Lemma 1 in \cite{CaiLow2005} that when $\theta\leq\sigma$, then
\[E[(Y^2-\tau\sigma^2)_+^2] \leq 6\sigma^2\theta^2 + \sigma^4\frac{4\tau^{1/2}+18}{e^{\tau/2}}.\]
On the other hand, when $\theta>\sigma$, we have
\[E[(Y^2-\tau\sigma^2)_+^2] \leq E[Y^4] = \theta^4 + 6\sigma^2\theta^2 + 3\sigma^4 \leq 10\theta^4.\]
If follows that we have
\[(E[(Y^2-\tau_n\sigma^2)_+-\theta_0])^2 \leq \max\bigg\{6\sigma^2\theta^2 + \sigma^4\frac{4\tau^{1/2}+18}{e^{\tau/2}}, 10\theta^4\bigg\}.\]

% Proof of Lemma
\subsubsection{Proof of Lemma~\ref{biasestimator4}}
The proof of Lemma~\ref{biasestimator4} is built on Lemma~\ref{e0} and Lemma~\ref{e1}.
 
% Prep Lemma
\begin{Lemma}\label{e0}
Let $Y\sim N(\theta, \sigma^2)$.  Then for $\tau\geq 1$,
\begin{align*}
&E[(Y^2-\sigma^2)\1(Y^2\leq\sigma^2\tau)] \\
&= \theta^2\bigg[\tPhi(-\tau^{1/2}-\frac{\theta}{\sigma}\Big)-\tPhi\Big(\tau^{1/2}-\frac{\theta}{\sigma}\Big)\bigg] \\
&\qquad + \phi\bigg(\tau^{1/2}+\frac{\theta}{\sigma}\bigg)[-\sigma^2\tau^{1/2}+\sigma\theta] + \phi\bigg(\tau^{1/2}-\frac{\theta}{\sigma}\bigg)[-\sigma^2\tau^{1/2}-\sigma\theta].
\end{align*}
In particular, when $\theta=0$, 
\[E[(Y^2-\sigma^2)\1(Y^2\leq\sigma^2\tau)] = -2\sigma^2\tau^{1/2}\phi(\tau^{1/2}).\]
\end{Lemma}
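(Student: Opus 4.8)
\textbf{Proof proposal for Lemma~\ref{e0}.}
The plan is to reduce everything to elementary moments of a truncated standard normal. I would first write $Y = \theta + \sigma Z$ with $Z\sim N(0,1)$, so that the event $\{Y^2\le\sigma^2\tau\}$ is $\{|Y|\le\sigma\tau^{1/2}\}$, i.e.\ $\{a\le Z\le b\}$ with $a = -\tau^{1/2}-\theta/\sigma$ and $b = \tau^{1/2}-\theta/\sigma$. Expanding the integrand,
\[
Y^2-\sigma^2 = \theta^2 + 2\sigma\theta Z + \sigma^2(Z^2-1),
\]
so that $E[(Y^2-\sigma^2)\1(Y^2\le\sigma^2\tau)]$ splits into the three pieces $\theta^2 P(a\le Z\le b)$, $2\sigma\theta\,E[Z\,\1(a\le Z\le b)]$, and $\sigma^2 E[(Z^2-1)\1(a\le Z\le b)]$.

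Next I would evaluate each piece with standard identities. For the first, $P(a\le Z\le b) = \Phi(b)-\Phi(a) = \tPhi(a)-\tPhi(b)$, which after substituting $a,b$ is exactly $\tPhi(-\tau^{1/2}-\theta/\sigma)-\tPhi(\tau^{1/2}-\theta/\sigma)$, producing the $\theta^2$ term of the claimed formula. For the second, $\int_a^b z\phi(z)\,dz = \phi(a)-\phi(b)$ since $\phi' = -z\phi$. For the third, integration by parts ($u=z$, $dv = z\phi\,dz$) gives $\int_a^b z^2\phi(z)\,dz = a\phi(a)-b\phi(b)+\Phi(b)-\Phi(a)$, hence $E[(Z^2-1)\1(a\le Z\le b)] = a\phi(a)-b\phi(b)$.

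Collecting the coefficients of $\phi(a)$ and $\phi(b)$ then finishes the computation: the coefficient of $\phi(a)$ is $2\sigma\theta + \sigma^2 a = \sigma\theta-\sigma^2\tau^{1/2}$ and that of $\phi(b)$ is $-2\sigma\theta-\sigma^2 b = -\sigma\theta-\sigma^2\tau^{1/2}$. Since $\phi$ is even, $\phi(a)=\phi(\tau^{1/2}+\theta/\sigma)$ and $\phi(b)=\phi(\tau^{1/2}-\theta/\sigma)$, which gives precisely the asserted expression. The special case $\theta=0$ follows by taking $a=-\tau^{1/2}$, $b=\tau^{1/2}$: the $\theta^2$ term vanishes and each of the two $\phi$-terms equals $-\sigma^2\tau^{1/2}\phi(\tau^{1/2})$, for a total of $-2\sigma^2\tau^{1/2}\phi(\tau^{1/2})$.

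There is no real analytic obstacle here — the identity is an exact computation. The only thing requiring care is the bookkeeping: tracking signs and exploiting the evenness of $\phi$ when the truncation limits $a,b$ carry a $-\theta/\sigma$ but the final formula is written in terms of $\phi(\tau^{1/2}\pm\theta/\sigma)$. I note that the hypothesis $\tau\ge1$ is not actually used in the identity; it is presumably kept only for uniformity with the companion lemmas (Lemma~\ref{biasestimator4}, Lemma~\ref{qbound2}).
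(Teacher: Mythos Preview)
Your proof is correct and follows essentially the same route as the paper: both substitute $Y=\theta+\sigma Z$, expand the quadratic, and evaluate the three resulting truncated-normal integrals using the identities for $\int z\phi$ and $\int z^2\phi$. The only cosmetic difference is that the paper first computes $E[Y^2\1(\cdot)]$ and then subtracts $\sigma^2 P(\cdot)$, whereas you expand $Y^2-\sigma^2$ directly as $\theta^2+2\sigma\theta Z+\sigma^2(Z^2-1)$, which makes the cancellation of the $\tPhi$ terms in the third piece happen one line earlier; your observation that $\tau\ge 1$ is not actually needed for the identity is also correct.
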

\begin{proof}
Let $\lambda = \tau^{1/2}$.  We have
\begin{align*}
&E[Y^2\1(Y^2\leq\sigma^2\tau)] \\
&= \int_{-\sigma\lambda}^{\sigma\lambda}y^2\frac{1}{\sqrt{2\pi}\sigma}e^{-(y-\theta)^2/2\sigma^2}\ dy \\
&= \int_{-\lambda-\theta/\sigma}^{\lambda-\theta/\sigma}(\theta+\sigma z)^2\frac{1}{\sqrt{2\pi}}e^{-z^2/2}\ dz \\
&= \theta^2\int_{-\lambda-\theta/\sigma}^{\lambda-\theta/\sigma}\phi(z)\ dz + 2\sigma\theta\int_{-\lambda-\theta/\sigma}^{\lambda-\theta/\sigma}z\phi(z)\ dz + \sigma^2\int_{-\lambda-\theta/\sigma}^{\lambda-\theta/\sigma}z^2\phi(z)\ dz. 
\end{align*}
Using the fact that
\[\int_a^\infty \phi(z)\ dz = \tilde{\Phi}(a), \quad \int_a^\infty z\phi(z)\ dz = \phi(a), \quad \int_a^\infty z^2\phi(z)\ dz = a\phi(a) + \tilde{\Phi}(a),\]
we have
\begin{align*}
&E[Y^2\1(Y^2\leq\sigma^2\tau)] \\
&= \theta^2[\tPhi(-\lambda-\theta/\sigma)-\tPhi(\lambda-\theta/\sigma)] + 2\sigma\theta[\phi(-\lambda-\theta/\sigma)-\phi(\lambda-\theta/\sigma)] \\
&\qquad + \sigma^2[(-\lambda-\theta/\sigma)\phi(-\lambda-\theta/\sigma)+\tPhi(-\lambda-\theta/\sigma)\\
&\qquad\qquad\quad -(\lambda-\theta/\sigma)\phi(\lambda-\theta/\sigma)-\tPhi(\lambda-\theta/\sigma)] \\
&= (\theta^2+\sigma^2)[\tPhi(-\lambda-\theta/\sigma)-\tPhi(\lambda-\theta/\sigma)] + \phi(\lambda+\theta/\sigma)[-\sigma^2\lambda+\sigma\theta] \\
&\qquad+ \phi(\lambda-\theta/\sigma)[-\sigma^2\lambda-\sigma\theta],
\end{align*}
the last equality due to $\phi(-\lambda-\theta/\sigma) = \phi(\lambda+\theta/\sigma)$.  The proof is complete since $\sigma^2E[\1(Y^2<\sigma^2\tau)] = \sigma^2[\tPhi(-\lambda-\theta/\sigma)-\tPhi(\lambda-\theta/\sigma)]$.
\end{proof}

% Prep Lemma
\begin{Lemma}\label{e1}
Let $Y\sim N(\theta, \sigma^2)$ and set $\theta_0 = E_0[(Y^2-\sigma^2)\1(Y^2\leq\sigma^2\tau)]$, where the expectation is taken under $\theta=0$.  Then for $\tau\geq 1$,
\[\big|E[(Y^2-\sigma^2)\1(Y^2\leq\sigma^2\tau)]-\theta_0\big| \leq \min\{\theta^2, 3\sigma^2\tau\}.\]
\end{Lemma}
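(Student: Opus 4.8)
The plan is to read off a closed form for $g(\theta):=E_\theta[(Y^2-\sigma^2)\1(Y^2\le\sigma^2\tau)]$ from Lemma~\ref{e0}, note that $\theta_0=g(0)=-2\sigma^2\tau^{1/2}\phi(\tau^{1/2})$, and then prove the two one-sided bounds $|g(\theta)-\theta_0|\le 3\sigma^2\tau$ and $|g(\theta)-\theta_0|\le\theta^2$ separately, taking the minimum at the end.

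The bound by $3\sigma^2\tau$ is immediate from boundedness of the integrand: on $\{Y^2\le\sigma^2\tau\}$ one has $Y^2-\sigma^2\in[-\sigma^2,\sigma^2(\tau-1)]$, so $|(Y^2-\sigma^2)\1(Y^2\le\sigma^2\tau)|\le\sigma^2\max\{1,\tau-1\}\le\sigma^2\tau$ pointwise for every $\tau\ge1$; hence $|g(\theta)|\le\sigma^2\tau$ for all $\theta$, in particular $|\theta_0|\le\sigma^2\tau$, and the triangle inequality gives $|g(\theta)-\theta_0|\le 2\sigma^2\tau\le 3\sigma^2\tau$.

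For the bound by $\theta^2$, I would first use that $g(\theta)=g(-\theta)$ to assume $\theta\ge0$, and set $u=\theta/\sigma\ge0$, $\lambda=\tau^{1/2}\ge1$. Subtracting $\theta_0$ from the expression in Lemma~\ref{e0} and writing $\Psi(u):=\phi(\lambda-u)-\phi(\lambda+u)\ge0$, one gets $g(\theta)-\theta_0=\theta^2 p(u)+\sigma^2 F(u)$, where $p(u)=P_\theta(Y^2<\sigma^2\tau)\in[0,1]$ satisfies $p'(u)=-\Psi(u)$ and $F(u)=-\lambda[\phi(\lambda+u)+\phi(\lambda-u)-2\phi(\lambda)]+u[\phi(\lambda+u)-\phi(\lambda-u)]$. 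A short computation with $\phi'(z)=-z\phi(z)$ gives $F(u)=\int_0^u(t^2-\lambda^2-1)\Psi(t)\,dt$, so that $G(u):=(g(\theta)-\theta_0)/\sigma^2=u^2p(u)+F(u)$ has $G(0)=0$ and $G'(u)=2up(u)-(\lambda^2+1)\Psi(u)$. The upper half $G(u)\le u^2$ then follows since $\frac{d}{du}(u^2-G(u))=2u(1-p(u))+(\lambda^2+1)\Psi(u)\ge0$; for the lower half I would use $G(u)\ge F(u)\ge-(\lambda^2+1)\int_0^u\Psi(t)\,dt=-(\lambda^2+1)[p(0)-p(u)]$ and then the scalar inequality $(\lambda^2+1)[p(0)-p(u)]\le u^2$ for $u\ge0$, $\lambda\ge1$. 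Together these give $-\theta^2\le g(\theta)-\theta_0\le\theta^2$.

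The routine parts are the algebraic rearrangement based on Lemma~\ref{e0} and the $3\sigma^2\tau$ bound. The one genuinely delicate step is the scalar inequality $(\lambda^2+1)[p(0)-p(u)]\le u^2$: since $p(0)-p(u)=\int_0^u\Psi$ with $\Psi(0)=0$, the ratio $(\lambda^2+1)[p(0)-p(u)]/u^2$ tends to $\lambda(\lambda^2+1)\phi(\lambda)$ as $u\downarrow0$, and $\sup_{\lambda\ge1}\lambda(\lambda^2+1)\phi(\lambda)<1$; the work is to show this limiting value dominates the ratio for all $u>0$, which is a one-variable monotonicity (or small case) check. If that estimate turns out awkward, an alternative is to decompose $(Y^2-\sigma^2)\1(Y^2\le\sigma^2\tau)=(Y^2-\sigma^2)-(Y^2-\sigma^2\tau)_+-\sigma^2(\tau-1)\1(Y^2>\sigma^2\tau)$, apply Lemma~\ref{qbound2} to the $(Y^2-\sigma^2\tau)_+$ term, and exploit that both $(Y^2-\sigma^2\tau)_+$ and $\1(Y^2>\sigma^2\tau)$ are even and nondecreasing in $|Y|$, so $E_\theta$ of each is nondecreasing in $|\theta|$, again reducing matters to an elementary scalar estimate.
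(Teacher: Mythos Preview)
Your $3\sigma^2\tau$ bound is cleaner than the paper's (you bound the integrand pointwise; they bound the expectation above and below separately and then add $|\theta_0|$), and your upper bound $G(u)\le u^2$ is essentially the paper's argument that $B'(\theta)\le 2\theta$ rewritten in the $u$ variable. So far so good.

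The gap is in the lower bound $G(u)\ge -u^2$. Your route reduces to the scalar inequality $(\lambda^2+1)\int_0^u\Psi(t)\,dt\le u^2$ for all $u\ge 0$ and $\lambda\ge 1$, but you do not prove it: you check the limit $u\downarrow 0$ gives a ratio $\lambda(\lambda^2+1)\phi(\lambda)<1$, and then assert that ``the work is to show this limiting value dominates the ratio for all $u>0$, which is a one-variable monotonicity (or small case) check.'' No such monotonicity is established, and it is not obvious; the function $u\mapsto (\lambda^2+1)\int_0^u\Psi/u^2$ need not be monotone, and your fallback decomposition via Lemma~\ref{qbound2} is also left as a sketch. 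So the lower bound is genuinely unfinished.

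The paper sidesteps this entirely with a case split that you are missing. Write $B(\theta)=g(\theta)-\theta_0$. For $\theta\ge\sigma$, observe directly from the complement that
\[
g(\theta)=E(Y^2-\sigma^2)-E[(Y^2-\sigma^2)\1(Y^2>\sigma^2\tau)]\ge \theta^2-E(Y^2)=-\sigma^2,
\]
so $B(\theta)\ge -\sigma^2-\theta_0\ge-\sigma^2\ge-\theta^2$. For $0\le\theta<\sigma$, use the explicit formula for $B'(\theta)$ together with the Mills ratio bound $\tPhi(x)<\phi(x)/x$ to show $B'(\theta)\ge 0$; since $B(0)=0$ this gives $B(\theta)\ge 0\ge-\theta^2$. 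This avoids your delicate integral inequality altogether and closes the proof in two lines per case.
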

\begin{proof}
Let $\lambda = \tau^{1/2}$.  It follows from Lemma~\ref{e0} that $\theta_0 = -2\sigma^2\lambda\phi(\lambda)$.  Set $B(\theta) = E[(Y^2-\sigma^2)\1(Y^2\leq\sigma^2\tau)]-\theta_0$, then 
\[E[(Y^2-\sigma^2)\1(Y^2\leq\sigma^2\tau)] \leq E[Y^2\1(Y^2\leq\sigma^2\tau)] \leq\sigma^2\lambda^2,\]
and
\begin{align*}
E[(Y^2-\sigma^2)\1(Y^2\leq\sigma^2\tau)] &= E(Y^2-\sigma^2) - E[(Y^2-\sigma^2)\1(Y^2>\sigma^2\tau)] \\
&\geq \theta^2-E(Y^2) = -\sigma^2 \geq-\sigma^2\lambda^2,
\end{align*}
hence
\[|B(\theta)| \leq \big|E[(Y^2-\sigma^2)\1(Y^2\leq\sigma^2\tau)]\big| + |\theta_0| \leq \sigma^2\lambda^2 + 2\sigma^2\lambda\phi(\lambda) \leq 3\sigma^2\lambda^2 = 3\sigma^2\tau.\]

Straightforward calculation yields for $\theta\geq 0$
\begin{align}
B'(\theta) &= \sigma(1+\lambda^2)[\phi(\lambda+\theta/\sigma)-\phi(\lambda-\theta/\sigma)] \nonumber\\
&\qquad + 2\theta[\tPhi(-\lambda-\theta/\sigma)-\tPhi(\lambda-\theta/\sigma)], \label{diff1}\\
B''(\theta) &= \phi(\lambda+\theta/\sigma)[-\lambda^2(\lambda+\theta/\sigma)-\lambda+\theta/\sigma] \nonumber\\
&\qquad + \phi(\lambda-\theta/\sigma)[-\lambda^2(\lambda-\theta/\sigma)-\lambda-\theta/\sigma] \nonumber\\
&\qquad + 2[\tPhi(-\lambda-\theta/\sigma)-\tPhi(\lambda-\theta/\sigma)]. \label{diff2}
\end{align}
It suffices to only consider $\theta\geq 0$ since $B(\theta) = B(-\theta)$.  It follows from \eqref{diff1} that for all $\theta\geq 0$, $B'(\theta)\leq 2\theta$.  Since $B(0) = 0$, it follows that
\[B(\theta) \leq \theta^2.\]
On the other hand, $\theta_0\leq 0$ immediately gives $B(\theta) \geq -\sigma^2 \geq -\theta^2$ for $\theta\geq\sigma$.  For $0\leq\theta<\sigma$, we have $\sigma(1+\lambda^2)\geq 2\theta$.  For $x>0$, we have $\tPhi(x)<x^{-1}\phi(x)$, so $\tPhi(-\lambda-\theta/\sigma) = 1 - \tPhi(\lambda+\theta/\sigma) \geq 1-(\lambda+\theta/\sigma)^{-1}\phi(\lambda+\theta/\sigma)$.  It then follows from \eqref{diff1} that for $0\leq\theta<\sigma$,
\begin{align*}
B'(\theta) &\geq 2\theta[\phi(\lambda+\theta/\sigma)-\phi(\lambda-\theta/\sigma)+\tPhi(-\lambda-\theta/\sigma)-\tPhi(\lambda-\theta/\sigma)] \\
&\geq 2\theta[1+(1-(\lambda+\theta/\sigma)^{-1})\phi(\lambda+\theta/\sigma) - \phi(\lambda-\theta/\sigma) - \tPhi(\lambda-\theta/\sigma)] \\
&\geq 2\theta\bigg[1+(1-(\lambda+\theta/\sigma)^{-1})\phi(\lambda+\theta/\sigma) - \frac{1}{\sqrt{2\pi}}-\half\bigg] \\
&\geq 0.
\end{align*}
Coupled with $B(0) = 0$, this implies that $B(\theta)\geq 0\geq -\theta^2$ for $0\leq\theta<\sigma$.  Hence,
\[B(\theta) \geq -\theta^2.\]
Therefore, for all $\theta$, we have
\[|B(\theta)| \leq \theta^2.\]
\end{proof}

\begin{proof}[Proof of Lemma~\ref{biasestimator4}]
Let $\theta_0 = E_0[(Y^2-\sigma^2)\1(Y^2\leq\sigma^2\tau)] = -2\sigma^2\tau^{1/2}\phi(\tau^{1/2})$, the second equality due to Lemma~\ref{e0}.  It follows from the expression
\begin{align*}
&E[(X^2-\sigma^2)(Y^2-\sigma^2)\1(X^2\vee Y^2>\sigma^2\tau)] \\
&\qquad = \mu^2\theta^2-E[(X^2-\sigma^2)\1(X^2\leq\sigma^2\tau)]E[(Y^2-\sigma^2)\1(Y^2\leq\sigma^2\tau)]
\end{align*}
and
\begin{align*}
\eta &= E_{(0, 0)}[(X^2-\sigma^2)(Y^2-\sigma^2)\1(X^2\vee Y^2>\sigma^2\tau)] \\
&= - E_0[(X^2-\sigma^2)\1(X^2\leq\sigma^2\tau)]E_0[(Y^2-\sigma^2)\1(Y^2\leq\sigma^2\tau)] \\
&= -\theta_0^2
\end{align*}
that we have
\begin{align}
&\big|E[(X^2-\sigma^2)(Y^2-\sigma^2)\1(X^2\vee Y^2>\sigma^2\tau)]-\eta-\mu^2\theta^2\big| \label{bias}\\
&= \big|E[(X^2-\sigma^2)\1(X^2\leq\sigma^2\tau)]E[(Y^2-\sigma^2)\1(Y^2\leq\sigma^2\tau)] - \theta_0^2\big|. \nonumber
\end{align}
Using the decomposition
\[AB-ab = (A-a)(B-b) + a(B-b) + b(A-a),\]
and triangle inequality, we get
\begin{align*}
&\big|E[(X^2-\sigma^2)\1(X^2\leq\sigma^2\tau)]E[(Y^2-\sigma^2)\1(Y^2\leq\sigma^2\tau)] - \theta_0^2\big| \\
&\leq \big|E[(X^2-\sigma^2)\1(X^2\leq\sigma^2\tau)]-\theta_0\big|\big|E[(Y^2-\sigma^2)\1(Y^2\leq\sigma^2\tau)] - \theta_0\big| \\
&\qquad + \big|\theta_0\big|\big|E[(X^2-\sigma^2)\1(X^2\leq\sigma^2\tau)]-\theta_0\big| + \big|\theta_0\big|\big|E[(Y^2-\sigma^2)\1(Y^2\leq\sigma^2\tau)]-\theta_0\big| \\
&\leq \min\{\mu^2, 3\sigma^2\tau\}\min\{\theta^2, 3\sigma^2\tau\} + 2\sigma^2\tau^{1/2}\phi(\tau^{1/2})\min\{\mu^2, 3\sigma^2\tau\} \\
&\qquad+ 2\sigma^2\tau^{1/2}\phi(\tau^{1/2})\min\{\theta^2, 3\sigma^2\tau\},
\end{align*}
the last inequality follows from Lemma~\ref{e1} and substitution of value of $\theta_0$.
\end{proof}

% Proof of Lemma
\subsubsection{Proofs of Lemma~\ref{varestimator4}}
We have
\begin{align*}
&\Var[(X^2-\sigma^2)(Y^2-\sigma^2)\1(X^2\vee Y^2>\sigma^2\tau)] \\
&= E[(X^2-\sigma^2)^2(Y^2-\sigma^2)^2\1(X^2\vee Y^2>\sigma^2\tau)] \\
&\qquad- \big\{E[(X^2-\sigma^2)(Y^2-\sigma^2)\1(X^2\vee Y^2>\sigma^2\tau)]\big\}^2 \\
&= E[(X^2-\sigma^2)^2(Y^2-\sigma^2)^2] - E[(X^2-\sigma^2)^2\1(X^2\leq\sigma^2\tau)(Y^2-\sigma^2)^2\1(Y^2\leq\sigma^2\tau)] \\
&\qquad - \big\{E[(X^2-\sigma^2)(Y^2-\sigma^2)]-E[(X^2-\sigma^2)\1(X^2\leq\sigma^2\tau)(Y^2-\sigma^2)\1(Y^2\leq\sigma^2\tau)]\big\}^2 \\
&= \Var[(X^2-\sigma^2)(Y^2-\sigma^2)] - E[(X^2-\sigma^2)^2\1(X^2\leq\sigma^2\tau)]E[(Y^2-\sigma^2)^2\1(Y^2\leq\sigma^2\tau)] \\
&\qquad - \big\{E[(X^2-\sigma^2)\1(X^2\leq\sigma^2\tau)]E[(Y^2-\sigma^2)\1(Y^2\leq\sigma^2\tau)]\big\}^2\\
&\qquad +2\mu^2\theta^2E[(X^2-\sigma^2)\1(X^2\leq\sigma^2\tau)]E[(Y^2-\sigma^2)\1(Y^2\leq\sigma^2\tau)] \\
&\leq \Var[(X^2-\sigma^2)(Y^2-\sigma^2)] \\
&\qquad +2\mu^2\theta^2E[(X^2-\sigma^2)\1(X^2\leq\sigma^2\tau)]E[(Y^2-\sigma^2)\1(Y^2\leq\sigma^2\tau)] \\
&\leq \Var[(X^2-\sigma^2)(Y^2-\sigma^2)] + 8\sigma^4\mu^2\theta^2\tau^2.
\end{align*}
Straightforward calculation yields
\begin{align*}
&\Var[(X^2-\sigma^2)(Y^2-\sigma^2)] \\
&= \Var(X^2-\sigma^2)\Var(Y^2-\sigma^2) + [E(X^2-\sigma^2)]^2\Var(Y^2-\sigma^2) \\
&\qquad + \Var(X^2-\sigma^2)[E(Y^2-\sigma^2)]^2 \\
&= [4\sigma^2\mu^2+2\sigma^4][4\sigma^2\theta^2+2\sigma^4] + \mu^4[4\sigma^2\theta^2+2\sigma^4] + \theta^4[4\sigma^2\mu^2+2\sigma^4] \\
&= 4\sigma^2\mu^4\theta^2 + 4\sigma^2\mu^2\theta^4 + 16\sigma^4\mu^2\theta^2 + 2\sigma^4\mu^4 + 2\sigma^4\theta^4 + 8\sigma^6\mu^2 + 8\sigma^6\theta^2 + 4\sigma^8.
\end{align*}
When $\mu=\theta=0$, let $d = E_{(0, 0)}[(X^2-\sigma^2)^4(Y^2-\sigma^2)^4] < \infty$, and we have
\begin{align*}
&\Var[(X^2-\sigma^2)(Y^2-\sigma^2)\1(X^2\vee Y^2>\sigma^2\tau)] \\
&\leq E[(X^2-\sigma^2)^2(Y^2-\sigma^2)^2\1(X^2\vee Y^2>\sigma^2\tau)] \\
&\leq \Big(E[(X^2-\sigma^2)^4(Y^2-\sigma^2)^4]P(X^2\vee Y^2>\sigma^2\tau)\Big)^{1/2} \\
&= d^{1/2}\Big(1-P(|Z|\leq\tau^{1/2})^2\Big)^{1/2}, \qquad\text{where }Z\sim N(0, 1) \\
&\leq (2d)^{1/2}\Big(1-P(|Z|\leq\tau^{1/2})\Big)^{1/2} \\
&= 2d^{1/2}\tPhi(\tau^{1/2})^{1/2},
\end{align*}
the second inequality follows from the Cauchy-Schwarz inequality.
\end{appendices}
\end{document}